\DeclareMathAlphabet{\mathpzc}{OT1}{pzc}{m}{it}
\numberwithin{equation}{section}
\begin{document}

\theoremstyle{plain}

\newtheorem{theorem}{Theorem}[section]
\newtheorem{lemma}[theorem]{Lemma}
\newtheorem{example}[theorem]{Example}
\newtheorem{proposition}[theorem]{Proposition}
\newtheorem{corollary}[theorem]{Corollary}
\newtheorem{definition}[theorem]{Definition}
\newtheorem{Ass}[theorem]{Assumption}
\newtheorem{condition}[theorem]{Condition}
\theoremstyle{definition}
\newtheorem{remark}[theorem]{Remark}
\newtheorem{SA}[theorem]{Standing Assumption}

\newcommand{\of}{[\hspace{-0.06cm}[}
\newcommand{\gs}{]\hspace{-0.06cm}]}

\newcommand\llambda{{\mathchoice
		{\lambda\mkern-4.5mu{\raisebox{.4ex}{\scriptsize$\backslash$}}}
		{\lambda\mkern-4.83mu{\raisebox{.4ex}{\scriptsize$\backslash$}}}
		{\lambda\mkern-4.5mu{\raisebox{.2ex}{\footnotesize$\scriptscriptstyle\backslash$}}}
		{\lambda\mkern-5.0mu{\raisebox{.2ex}{\tiny$\scriptscriptstyle\backslash$}}}}}

\newcommand{\1}{\mathds{1}}

\newcommand{\F}{\mathbf{F}}
\newcommand{\G}{\mathbf{G}}

\newcommand{\B}{\mathbf{B}}

\newcommand{\M}{\mathcal{M}}

\newcommand{\la}{\langle}
\newcommand{\ra}{\rangle}

\newcommand{\lle}{\langle\hspace{-0.085cm}\langle}
\newcommand{\rre}{\rangle\hspace{-0.085cm}\rangle}
\newcommand{\blle}{\Big\langle\hspace{-0.155cm}\Big\langle}
\newcommand{\brre}{\Big\rangle\hspace{-0.155cm}\Big\rangle}

\newcommand{\X}{\mathsf{X}}

\newcommand{\tr}{\operatorname{tr}}
\newcommand{\N}{{\mathbb{N}}}
\newcommand{\cadlag}{c\`adl\`ag }
\newcommand{\on}{\operatorname}
\newcommand{\oP}{\overline{P}}
\newcommand{\oO}{\mathcal{O}}
\newcommand{\D}{D(\mathbb{R}_+; \mathbb{R})}

\renewcommand{\epsilon}{\varepsilon}

\newcommand{\fPs}{\mathfrak{P}_{\textup{sem}}}
\newcommand{\fPas}{\mathfrak{P}^{\textup{ac}}_{\textup{sem}}}
\newcommand{\rrarrow}{\twoheadrightarrow}
\newcommand{\cA}{\mathcal{C}}
\newcommand{\cR}{\mathcal{R}}
\newcommand{\cK}{\mathcal{K}}
\newcommand{\cQ}{\mathcal{Q}}
\newcommand{\cF}{\mathcal{F}}
\newcommand{\cC}{\mathcal{C}}
\newcommand{\cD}{\mathcal{D}}
\newcommand{\bC}{\mathbb{C}}
\newcommand{\bth}{\overset{\leftarrow}\theta}
\renewcommand{\th}{\theta}

\newcommand{\bR}{\mathbb{R}}
\newcommand{\nnabla}{\nabla}
\newcommand{\f}{\mathfrak{f}}
\newcommand{\g}{\mathfrak{g}}
\newcommand{\oconv}{\overline{\on{co}}\hspace{0.075cm}}
\renewcommand{\a}{\mathfrak{a}}
\renewcommand{\b}{\mathfrak{b}}
\renewcommand{\d}{d}
\newcommand{\bS}{\mathbb{S}^\d_+}
\newcommand{\p}{\dot{\partial}}
\newcommand{\dr}{r} 
\newcommand{\m}{\mathbb{M}}
\newcommand{\Q}{Q}
\newcommand{\dd}{\mathsf{d}}

\renewcommand{\emptyset}{\varnothing}

\allowdisplaybreaks

\makeatletter
\@namedef{subjclassname@2020}{%
	\textup{2020} Mathematics Subject Classification}
\makeatother

 \title[Nonlinear Continuous Semimartingales]{Nonlinear Continuous Semimartingales} 
\author[D. Criens]{David Criens}
\author[L. Niemann]{Lars Niemann}
\address{Albert-Ludwigs University of Freiburg, Ernst-Zermelo-Str. 1, 79104 Freiburg, Germany}
\email{david.criens@stochastik.uni-freiburg.de}
\email{lars.niemann@stochastik.uni-freiburg.de}

\keywords{
nonlinear semimartingales; nonlinear martingale problem; sublinear expectation; nonlinear expectation; path-dependent partial differential equation; viscosity solution; semimartingale characteristics; Knightian uncertainty}

\subjclass[2020]{60G65, 60G44, 60G07, 93E20, 35D40}

\thanks{
We thank the anonymous referee for many helpful comments and suggestions. Moreover, we are grateful to Andrea Cosso for many helpful comments related to the preprint \cite{cosso2}.
DC acknowledges financial support from the DFG project SCHM 2160/15-1 and LN acknowledges financial support from the DFG project SCHM 2160/13-1.}
\date{\today}

\maketitle

\begin{abstract} In this paper we study a family of nonlinear (conditional) expectations that can be understood as a continuous semimartingale with uncertain local characteristics. Here, the differential characteristics are prescribed by a set-valued function that depends on time and path in a non-Markovian way. 
We provide a dynamic programming principle for the nonlinear expectation and we link the corresponding value function to a variational form of a nonlinear path-dependent partial differential equation. In particular, we establish conditions that allow us to identify the value function as the unique viscosity solution.
Furthermore, we prove that the nonlinear expectation solves a nonlinear martingale problem, which confirms our interpretation as a nonlinear semimartingale. 
\end{abstract}

\section{Introduction}
In this paper we study a family of nonlinear (conditional) expectations which we call \emph{nonlinear continuous semimartingale} and which we consider as a continuous semimartingale with uncertain local characteristics.
This line of research started with the seminal work of Peng \cite{peng2007g, peng2008multi} on the \(G\)-Brownian motion. In recent years, there have been several extensions to construct larger classes of nonlinear (Markov) processes, see \cite{fadina2019affine, hu2021g, neufeld2017nonlinear, nutz}. At this point we highlight the articles \cite{ElKa15, neufeld2014measurability, NVH} which establish general abstract measure theoretic concepts to construct nonlinear expectations. Next to this approach, nonlinear Markov processes have also been constructed via nonlinear semigroups, see \cite{denk2020semigroup,hol16,K19,K21} and, in particular, \cite[Chapter 4]{hol16} for a comparison of the methods. 

This paper investigates nonlinear processes with non-Markovian dynamics. We define a nonlinear expectation via 
\[
\mathcal{E}_t (\psi) (\omega) := \sup_{P \in \cA (t, \omega)} E^P \big[ \psi \big], \quad (t, \omega) \in \mathbb{R}_+ \times C(\mathbb{R}_+; \mathbb{R}^\d), 
\]
where \(\psi \colon C(\mathbb{R}_+; \mathbb{R}^\d) \to \mathbb{R}\) is an upper semianalytic function and \(\cC (t, \omega)\) is a set of probability measures on the Wiener space which give point mass to the path \(\omega\) till time \(t\) and afterwards coincide with the law of a semimartingale with absolutely continuous characteristics. 
In this paper, we parameterize drift and volatility by a compact parameter space \(F\) and two functions \(b \colon F \times \bR_+ \times C(\bR_+;\bR^\d) \to \bR^\d\) and \(a \colon F \times \bR_+ \times C(\bR_+;\bR^\d) \to \bS\) such that
\begin{align*}
\cC(t,\omega) := \Big\{ P \in \mathfrak{P}_{\text{sem}}^{\text{ac}}(t)\colon P&(X^t = \omega^t) = 1, 
\\&(\llambda \otimes P)\text{-a.e. } (dB^P_{\cdot + t} /d\llambda, dC^P_{\cdot + t}/d\llambda) \in \Theta (\cdot + t, X) \Big\},
\end{align*}
where
\[
\Theta (t, \omega) := \big\{(b (f, t, \omega), a (f, t, \omega)) \colon f \in F \big\},
\]
and
\(\fPas(t)\) denotes the set of semimartingale laws after \(t\) with absolutely continuous characteristics. This framework includes nonlinear L\'evy processes as introduced (with jumps) in \cite{neufeld2017nonlinear} and the class of nonlinear affine processes as studied in \cite{fadina2019affine}. Furthermore, our setting can also be used to model path-dependent dynamics such as stochastic delay equations under parameter uncertainty.

For this nonlinear expectation we prove the dynamic programming principle (DPP), i.e., we prove the tower property
\[
\mathcal{E}_\sigma (\psi) = \mathcal{E}_\sigma (\mathcal{E}_\tau (\psi))
\]
for all finite stopping times \(\tau \geq \sigma\). To prove the DPP we use an abstract theorem from \cite{ElKa15}. The work lies in the verification of its prerequisites. To check them we extend certain results from \cite{neufeld2014measurability} on the measurability of the semimartingale property and the behavior of the characteristics to a dynamic framework. 

In a second step we identify two properties of \(\mathcal{E}\) which confirm our interpretation as a nonlinear continuous semimartingale. First, we relate the value function 
\begin{align} \label{eq: value function intr}
v (t, \omega) := \mathcal{E}_t (\psi) (\omega) = \sup_{P \in \cA (t, \omega)} E^P \big[ \psi  \big]
\end{align}
to a path-dependent Kolmogorov type partial differential equation and, in the second part, we show that \(\mathcal{E}\) solves a type of nonlinear martingale problem. Let us discuss our contributions in more detail.

Under mere continuity and linear growth conditions on the drift \(b\) and the volatility \(a\), and under the hypothesis that the set \(\{ (b (f, t, \omega), a (f, t, \omega)) \colon f \in F\}\) is convex for every \((t,\omega) \in \bR_+ \times C(\bR_+, \bR^\d) \), we show that the value function \(v\)
is a weak sense viscosity solution, i.e., a path-dependent Crandall--Lions type viscosity solution without regularity properties, to the following path-dependent partial differential equation (PPDE):
\begin{equation} \label{eq: PIDE intro}
\begin{cases}   
\p v (t, \omega) + G (t, \omega, v) = 0, & \text{for } (t, \omega) \in [0, T) \times C(\mathbb{R}_+; \mathbb{R}^\d), \\
v (T, \omega) = \psi (\omega), & \text{for } \omega \in C(\mathbb{R}_+; \mathbb{R}^\d),
\end{cases}
\end{equation}
where
\begin{align} \label{eq: def G intro}
G(t, \omega, \phi) := \sup \Big\{ \langle \nabla \phi (t, \omega), b(f, t, \omega) \rangle
+ \tfrac{1}{2} \on{tr} \big[  \nabla^2\phi (t, \omega) a (f, t, \omega) \big]\colon f \in F \Big\}.
\end{align} 
The proof for the viscosity property is split into two parts, i.e., we prove the sub- and the supersolution property. The ideas of proof are based on applications of Berge's maximum theorem, Skorokhod's existence theorem for stochastic differential equations and Lebesgue's differentiation theorem. In contrast to the proofs from \cite{fadina2019affine, neufeld2017nonlinear} for the viscosity subsolution property in L\'evy and continuous affine frameworks respectively, we do not work with explicit moment estimates. This allows us to extend the class of test functions in our framework to \(C^{1, 2}\) in comparison to the class \(C^{2, 3}\) as used in \cite{fadina2019affine, neufeld2017nonlinear}.

Further, we investigate when the value function is not only a {\em weak sense} viscosity solution but has in addition some regularity properties.
By virtue of the last term in \eqref{eq: value function intr} and Berge's maximum theorem, it is a natural idea to deduce regularity properties of \(v\) from corresponding properties of the set-valued map \( (t,\omega) \mapsto \cC(t,\omega)\). To the best of our knowledge, the idea to deduce regularity properties of value functions from related properties of set-valued maps traces back to the seminal paper \cite{nicole1987compactification} that investigates a controlled diffusion framework.
Due to the appearance of \(\fPas (t)\) and \((dB^P_{\cdot + t}/d \llambda, dC^P_{\cdot + t} /d \llambda)\) in the definition of the set \( \cC(t,\omega) \), regularity properties of \((t, \omega) \mapsto \cC(t, \omega)\) seem at first glance to be difficult to verify. To get a more convenient condition, we show that 
\[
v(t,\omega) = \sup_{P \in \cA(t, \omega)} E^{P}\big[\psi\big]  = \sup_{P \in \mathcal{R}(t, \omega)} E^{P}\big[\psi(\omega \ \widetilde{\otimes}_t \ X) \big],
\]
where
\begin{align*}
\mathcal{R}(t,\omega) := \Big\{ P \in \fPas \colon P &\circ X_0^{-1} = \delta_{\omega (t)}, \\
&(\llambda \otimes P)\text{-a.e. } (dB^{P} /d\llambda, dC^{P}/d\llambda) \in \Theta (\cdot + t, \omega \ \widetilde{\otimes}_t\ X)  \Big\},
\end{align*}
and
\[
\omega\ \widetilde{\otimes}_t\ \omega' :=  \omega \1_{[ 0, t)} + (\omega (t) + \omega' (\cdot - t) - \omega' (0)) \1_{[t, \infty)}.
\]
This reformulation of the value function \(v\) explains that it suffices to investigate the correspondence \((t,\omega) \mapsto \mathcal{R}(t,\omega)\) and it connects the two (seemingly closely related) approaches from \cite{ElKa15} and \cite{NVH} for the construction of nonlinear expectations. 
We show that \((t,\omega) \mapsto \cR(t,\omega)\) is upper hemicontinuous and compact-valued, which establishes upper semicontinuity of \(v\). This requires a profound analysis of the limiting behaviour of semimartingale characteristics and hinges on the continuity of the correspondence \( (t, \omega) \mapsto \Theta(t,\omega)\).
We also present a counterexample which explains that mere continuity and linear growth of \(b\) and \(a\) are insufficient for lower hemicontinuity of \((t, \omega) \mapsto \cR(t, \omega)\).

Provided \(b\) and \(a\) are additionally locally Lipschitz continuous, we prove lower hemicontinuity of 
\( (t, \omega) \mapsto \cR(t,\omega)\). To this end, we combine arguments based on an implicit function theorem, a strong existence property for stochastic differential equations with random locally Lipschitz coefficients and Gronwall's lemma.
To the best of our knowledge, this is the first result regarding lower hemicontinuity in a path-dependent setting related to nonlinear stochastic processes. 

Under a uniform Lipschitz continuity condition on the coefficients \(b\) and \(a\) and the terminal function \(\psi\), we also show that the value function \(v\) has a certain Lipschitz property w.r.t. the function 
\begin{align*}
		\dd ( (t,\omega), (s, \alpha)) :=  \Big( 1 + \sup_{r \in [0, t]} \|\omega (r)\| +  \sup_{r \in [0, s]} &  \|\alpha (r)\|\Big) | t - s |^{1/2} \\&+ \sup_{r \in [0, T]} \|\omega(r \wedge t) - \alpha( r \wedge s) \|,
\end{align*}
for \((t, \omega), (s, \alpha) \in [0, T] \times C([0, T]; \bR^\d).\)
This observation allows us to invoke a novel uniqueness result from \cite{zhou} that identifies \(v\) as the unique viscosity solution to the PPDE \eqref{eq: PIDE intro} that is bounded and Lipschitz continuous w.r.t. \(\dd\).

Next, we discuss the martingale problem related to \(\mathcal{E}\). In case the coefficients \(b\) and \(a\) are compactly parameterized independently of each other, and under linear growth and mere continuity assumptions, for suitable test functions \(\phi\), we show that the process
\[
\phi (X_t) - \int_0^t G (s, X, \phi) ds, \quad t \in \mathbb{R}_+, 
\]
is a local \(\mathcal{E}\)-martingale. 
This seems to be a novel connection between nonlinear martingale problems and nonlinear processes in a path-dependent setting. Our proof is based on applications of a measurable maximum theorem and Skorokhod's existence theorem.

This paper is structured as follows: in Section \ref{sec: setting} we introduce our setting. In Section~\ref{sec: DPP} we state the DPP and in Section \ref{sec: vis} we discuss the relation of the correspondence \(\cR\) to the PPDE \eqref{eq: PIDE intro}. In Section \ref{sec: MP} the nonlinear martingale problem is stated. The proofs for our main results are given in the remaining sections. More precisely, the DPP is proved in Section~\ref{sec: pf DPP}, the viscosity property is proved in Section~\ref{sec: pf vis}, regularity of \(\cR\) and \(v\) are established in Section \ref{sec: pf regularity r} and the martingale problem is proved in Section~\ref{sec: pf MP}.

\section{The Setting}\label{sec: setting}
Let \(\d \in \mathbb{N}\) be a fixed dimension and define $\Omega$ to be the space of continuous functions \(\mathbb{R}_+ \to \mathbb{R}^\d\) endowed with the local uniform topology. The Euclidean scalar product and the corresponding Euclidean norm are denoted by \(\langle \cdot, \cdot\rangle\) and \(\|\cdot\|\).
We write \(X\) for the canonical process on $\Omega$, i.e., \(X_t (\omega) = \omega (t)\) for \(\omega \in \Omega\) and \(t \in \mathbb{R}_+\). 
It is well-known that \(\mathcal{F} := \mathcal{B}(\Omega) = \sigma (X_t, t \geq 0)\).
We define $\F := (\mathcal{F}_t)_{t \geq 0}$ as the canonical filtration generated by $X$, i.e., \(\mathcal{F}_t := \sigma (X_s, s \leq t)\) for \(t \in \mathbb{R}_+\). Notice that we do not make the filtration \(\F\) right-continuous. The set of probability measures on \((\Omega, \mathcal{F})\) is denoted by \(\mathfrak{P}(\Omega)\) and endowed with the usual topology of convergence in distribution.
Let \(F\) be a metrizable space and 
let \(b \colon F \times \mathbb{R}_+ \times \Omega \to \mathbb{R}^\d\) and \(a \colon F \times \mathbb{R}_+ \times \Omega \to \bS\) be Borel functions such that \((t, \omega) \mapsto b(f, t, \omega)\) and \((t, \omega) \mapsto a (f, t, \omega)\) are predictable for every \(f \in F\). Here, \(\bS\) denotes the space of all real-valued symmetric positive semidefinite \(\d \times \d\) matrices. We define the correspondence, i.e., the set-valued mapping, \(\Theta \colon \mathbb{R}_+ \times \Omega \twoheadrightarrow \mathbb{R}^\d \times \bS\) by
\[
\Theta (t, \omega) := \big\{(b (f, t, \omega), a (f, t, \omega)) \colon f \in F \big\}.
\]
\begin{SA} \label{SA: meas gr}
	\(\Theta\) has a measurable graph, i.e., the graph
	\[
	\on{gr} \Theta = \big\{ (t, \omega, b, a) \in \mathbb{R}_+ \times \Omega \times \mathbb{R}^\d \times \bS \colon (b, a) \in \Theta (t, \omega) \big\}
	\]
	is Borel. 
\end{SA}
In Lemma \ref{lem: mbl graph} below we will see that this standing assumption holds once \(F\) is compact and \(b\) and \(a\) are continuous in the \(F\) variable.

We call an \(\bR^\d\)-valued continuous process \(Y = (Y_t)_{t \geq 0}\) a (continuous) \emph{semimartingale after a time \(t^* \in \mathbb{R}_+\)} if the process \(Y_{\cdot + t^*} = (Y_{t + t^*})_{t \geq 0}\) is a \(\d\)-dimensional semimartingale for its natural right-continuous filtration.
Notice that it comes without loss of generality that we consider the right-continuous version of the filtration (see \cite[Proposition~2.2]{neufeld2014measurability}). 
The law of a semimartingale after \(t^*\) is said to be a \emph{semimartingale law after \(t^*\)} and the set of them is denoted by \(\fPs (t^*)\).
Notice also that \(P \in \fPs(t^*)\) if and only if the coordinate process is a semimartingale after \(t^*\), see Lemma \ref{lem: jacod restatements} below.
For \(P \in \fPs (t^*)\) we denote the semimartingale characteristics of the shifted coordinate process \(X_{\cdot + t^*}\) by \((B^P_{\cdot + t^*}, C^P_{\cdot + t^*})\). 
Moreover, we set 
\[
\fPas (t^*) := \big\{ P \in \fPs (t^*) \colon P\text{-a.s. } (B^P_{\cdot + t^*}, C^P_{\cdot + t^*}) \ll \llambda \big\}, \quad \fPas := \fPas (0),
\]
where \(\llambda\) denotes the Lebesgue measure. 
For \(\omega, \omega' \in \Omega\) and \(t \in \mathbb{R}_+\), we define the concatenation
\[
\omega \otimes_t \omega' :=  \omega \1_{[ 0, t)} + (\omega (t) + \omega' - \omega' (t))\1_{[t, \infty)}.
\]
Finally, for $(t,\omega) \in \bR_+ \times \Omega$, we define $\cA(t,\omega) \subset \mathfrak{P}(\Omega)$ by
\begin{align*}
\cC(t,\omega) := \Big\{ P \in \mathfrak{P}_{\text{sem}}^{\text{ac}}(t)\colon &P(X^t = \omega^t) = 1, \\&\qquad
(\llambda \otimes P)\text{-a.e. } (dB^P_{\cdot + t} /d\llambda, dC^P_{\cdot + t}/d\llambda) \in \Theta (\cdot + t, \omega \otimes_t X) \Big\},
\end{align*}
where we use the standard notation \(X^t := X_{\cdot \wedge t}\).

To lighten our notation, let us further define, for two stopping times \(S\) and \(T\), the stochastic interval
	\[
	\of S, T \of \hspace{0.1cm} := \{ (t,\omega) \in \bR_+ \times \Omega \colon S(\omega) \leq t < T(\omega) \}.
	\]
	The stochastic intervals \( \gs S, T \of, \of S, T \gs, \gs S, T \gs  \) are defined accordingly.
	In particular, the equality \(\of 0, \infty\of \hspace{0.1cm} = \bR_+ \times \Omega\) holds.

\begin{SA} \label{SA: non empty}
	\(\cA (t, \omega) \not = \emptyset\) for all \((t, \omega) \in \of 0, \infty\of\).
\end{SA}
In Lemma \ref{lem: SA2 holds} below we will see that this standing assumption holds under continuity and linear growth conditions on \(b\) and \(a\).

\begin{remark}
	\quad
	\begin{enumerate}
		\item[\textup{(i)}] 		The sets $(\cA(t,\omega))_{(t, \omega) \in \of 0, \infty\of}$ are adapted in the following sense:
		for each $t \in \mathbb{R}_+$, the set $\cA(t, \omega)$ depends only on the path of $\omega$ up to time $t$.
		\item[\textup{(ii)}]  Notice that
			\begin{align*}
			\cA(t,\omega) =\Big\{ P \in \mathfrak{P}_{\text{sem}}^{\text{ac}}(t)\colon &P(X^t = \omega^t) = 1, \\&\quad
			(\llambda \otimes P)\text{-a.e. } (dB^P_{\cdot + t} /d\llambda, dC^P_{\cdot + t}/d\llambda) \in \Theta (\cdot + t, X) \Big\}.
			\end{align*}
			We defined \(\cA(t, \omega)\) with the seemingly more complicated ingredient \(\Theta (\cdot + t, \omega \otimes_t X)\), instead of \(\Theta (\cdot + t, X)\), to prevent confusions about measurability. Namely, both \((dB^P_{\cdot + t} /d\llambda, dC^P_{\cdot + t}/d\llambda)\) and \(\Theta (\cdot + t, \omega \otimes_t X)\) are measurable with respect to \(\sigma (X_s, s \geq t)\), while \(\Theta (\cdot + t, X)\) might also depend on \(X_s, s < t\). 
	\end{enumerate}
\end{remark}

In the following we state and discuss some conditions needed to formulate our main results.
\begin{condition}[Linear Growth] \label{cond: LG}
	For every \(T > 0\), there exists a constant \(C = C_T > 0\) such that 
	\[
	\|b (f, t, \omega)\|^2 + \on{tr} \big[ a (f, t, \omega)\big] \leq C \Big( 1 + \sup_{s \in [0, t]} \| \omega (s) \|^2 \Big)
	\]
	for all \(f \in F\) and \((t, \omega) \in \of 0, T\gs\).
\end{condition}

\begin{condition}[Continuity in Control] \label{cond: continuity control}
	For each \((t, \omega) \in \of 0, \infty\of\), the maps \(f \mapsto b(f, t, \omega)\) and \(f \mapsto a (f, t, \omega)\) are continuous.
\end{condition}

\begin{condition}[Joint Continuity] \label{cond: joint continuity in all}
	The functions \((f, t, \omega) \mapsto b (f, t, \omega)\) and \((f, t, \omega) \mapsto a (f, t, \omega)\) are continuous. 
\end{condition}

\begin{condition}[Convexity] \label{cond: convexity}
	For every \((t, \omega) \in \of 0, \infty\of\), the set \(\{ (b (f, t, \omega), a (f, t, \omega)) \colon f \in F\} \subset \bR^\d \times \bS\) is convex.
\end{condition}

Before we present our main results, let us shortly show that our standing assumptions hold under some of the above conditions. We start with Standing Assumption \ref{SA: meas gr}.

\begin{lemma} \label{lem: mbl graph}
	If \(F\) is a compact metrizable space and Condition \ref{cond: continuity control} holds, then the correspondence \(\Theta\)
	has measurable graph, i.e., Standing Assumption \ref{SA: meas gr} holds.
\end{lemma}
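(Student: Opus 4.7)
The plan is to express $\mathrm{gr}\,\Theta$ as a countable Boolean combination of Borel sets, using compactness of $F$ and continuity in the control variable to reduce a continuum of selections to a dense countable family.

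First, since $F$ is compact metrizable, it is separable; fix once and for all a countable dense subset $\{f_n : n \in \mathbb{N}\} \subset F$. Define the map $\Phi \colon F \times \mathbb{R}_+ \times \Omega \to \mathbb{R}^\d \times \bS$ by $\Phi(f,t,\omega) := (b(f,t,\omega), a(f,t,\omega))$. By the overall assumption that $(t,\omega) \mapsto b(f,t,\omega)$ and $(t,\omega) \mapsto a(f,t,\omega)$ are predictable (hence Borel) for every fixed $f$, the section $(t,\omega) \mapsto \Phi(f_n,t,\omega)$ is Borel measurable for each $n$.

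Next, I would observe that under Condition \ref{cond: continuity control}, the set $\Theta(t,\omega) = \Phi(F \times \{(t,\omega)\})$ is the continuous image of the compact set $F$, hence compact, hence closed in $\mathbb{R}^\d \times \bS$. Combining continuity of $f \mapsto \Phi(f,t,\omega)$ with density of $\{f_n\}$ in $F$, the countable set $\{\Phi(f_n,t,\omega) : n \in \mathbb{N}\}$ is dense in $\Theta(t,\omega)$, so
\[
\Theta(t,\omega) = \overline{\{\Phi(f_n, t, \omega) : n \in \mathbb{N}\}}.
\]
Equipping $\mathbb{R}^\d \times \bS$ with any compatible norm $\|\cdot\|$, this closure characterization rephrases membership $(\beta,\alpha) \in \Theta(t,\omega)$ as: for every $k \geq 1$ there exists $n \geq 1$ with $\|(\beta,\alpha) - \Phi(f_n,t,\omega)\| < 1/k$.

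Finally, I would write
\[
\mathrm{gr}\,\Theta = \bigcap_{k \geq 1} \bigcup_{n \geq 1} \Big\{ (t,\omega,\beta,\alpha) \in \mathbb{R}_+ \times \Omega \times \mathbb{R}^\d \times \bS : \|(\beta,\alpha) - \Phi(f_n,t,\omega)\| < \tfrac{1}{k} \Big\}.
\]
Each set on the right is Borel because $(t,\omega,\beta,\alpha) \mapsto \|(\beta,\alpha) - \Phi(f_n,t,\omega)\|$ is Borel as a composition of Borel and continuous maps. A countable intersection of countable unions of Borel sets is Borel, so $\mathrm{gr}\,\Theta$ is Borel, establishing Standing Assumption \ref{SA: meas gr}. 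There is no serious obstacle here; the only point that requires care is confirming that sectional Borel measurability of $\Phi$ in $(t,\omega)$ (supplied by the predictability hypothesis) combines correctly with continuity in $f$ to pass to the countable dense skeleton — this is exactly where compactness plus metrizability of $F$ is used.
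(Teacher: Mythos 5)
Your proof is correct, and it takes a slightly more elementary route than the paper. The paper proves a general lemma (Lemma~\ref{lem: correspondence meas graph general result}) by first showing the correspondence is \emph{weakly measurable} via the same Castaing representation $\{g(f_n,\cdot)\}$, and then invoking \cite[Theorem~18.6]{charalambos2013infinite}, which converts weak measurability plus closed values into a measurable graph. You instead bypass the abstract weak-measurability/graph dictionary and write $\mathrm{gr}\,\Theta$ directly as
\[
\mathrm{gr}\,\Theta = \bigcap_{k \geq 1} \bigcup_{n \geq 1} \big\{ (t,\omega,\beta,\alpha) \colon \|(\beta,\alpha) - \Phi(f_n,t,\omega)\| < \tfrac{1}{k} \big\},
\]
using compactness only to know $\Theta(t,\omega)$ is closed (so that the closure of the dense skeleton recovers it exactly). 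Both proofs hinge on the same three ingredients --- separability of $F$, continuity in $f$, and Borel measurability in $(t,\omega)$ for each fixed $f$ --- but your version is self-contained and avoids the external theorem, at the cost of not being stated at the same level of generality as the paper's Lemma~\ref{lem: correspondence meas graph general result} (which works for any measurable space $\Sigma$ and any separable metrizable target $E$). For the specific claim at hand, the two arguments are equivalent in substance, and yours is arguably cleaner for a reader who does not want to consult \cite{charalambos2013infinite}.
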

The previous lemma is a direct consequence of the following general observation.
\begin{lemma} \label{lem: correspondence meas graph general result}
	Let $(\Sigma, \mathcal{G})$ be a measurable space, let $F$ be a compact metrizable space, let $E$ be a separable metrizable space, and finally let $g \colon F \times \Sigma \to E$ be a Carath\'eodory function, i.e., \(g\) is continuous in the first and measurable in the second variable.
	Then, the correspondence $\varphi$ defined by
	$\varphi(\sigma) := \{ g(f, \sigma) \colon f \in F \}$ 
	has a measurable graph.
\end{lemma}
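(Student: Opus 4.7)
The plan is to exploit compactness of $F$ and continuity of $g(\cdot, \sigma)$ to reduce the graph of $\varphi$ to a countable Boolean combination of obviously measurable sets, thereby sidestepping any projection theorem.

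First, I would pick a countable dense subset $\{f_n\}_{n \in \N} \subset F$, which exists because the compact metrizable space $F$ is separable. Since $g(\cdot,\sigma)$ is continuous and $F$ is compact, $\varphi(\sigma) = g(F,\sigma)$ is a compact, hence closed, subset of $E$. By density of $\{f_n\}$ in $F$ and continuity of $g(\cdot,\sigma)$, one has
\[
\varphi(\sigma) = \overline{\{g(f_n,\sigma)\colon n \in \N\}} \qquad \text{for every } \sigma \in \Sigma,
\]
where the closure is taken in $E$.

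Next, fix any metric $d_E$ compatible with the topology of $E$. The previous identity yields
\[
\on{gr}\varphi = \bigcap_{k \in \N}\bigcup_{n \in \N} A_{k,n}, \qquad A_{k,n} := \bigl\{(\sigma,e) \in \Sigma \times E \colon d_E\bigl(g(f_n,\sigma),e\bigr) < \tfrac{1}{k}\bigr\}.
\]
For each fixed $n$, the map $\sigma \mapsto g(f_n,\sigma)$ is $\mathcal{G}/\mathcal{B}(E)$-measurable by the Carath\'eodory hypothesis, so $(\sigma,e) \mapsto (g(f_n,\sigma),e)$ is $\mathcal{G}\otimes \mathcal{B}(E) / \mathcal{B}(E)\otimes \mathcal{B}(E)$-measurable. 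Separability of $E$ gives $\mathcal{B}(E)\otimes \mathcal{B}(E) = \mathcal{B}(E \times E)$, so composing with the continuous map $d_E\colon E \times E \to \mathbb{R}$ produces a $\mathcal{G}\otimes \mathcal{B}(E)$-measurable real-valued map; hence each $A_{k,n}$ is in $\mathcal{G}\otimes \mathcal{B}(E)$. Countable operations then put $\on{gr}\varphi$ in $\mathcal{G}\otimes \mathcal{B}(E)$, completing the proof.

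The only genuine subtlety I anticipate is the joint measurability of $(\sigma,e)\mapsto d_E(g(f_n,\sigma),e)$, for which separability of $E$ is essential in order to identify the product $\sigma$-algebra with the Borel $\sigma$-algebra on $E \times E$; this is where the hypothesis on $E$ enters. Everything else is a direct consequence of compactness of $F$ (which gives closedness of $\varphi(\sigma)$) and separability of $F$ (which produces the countable dense set used to witness membership in the closure).
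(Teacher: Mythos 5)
Your proof is correct. The key idea you share with the paper is the Castaing representation: both arguments pick a countable dense set $\{f_n\}\subset F$ and use continuity of $g(\cdot,\sigma)$ plus compactness of $F$ to get $\varphi(\sigma)=\overline{\{g(f_n,\sigma)\colon n\in\N\}}$. The paths diverge after that. The paper uses this representation only to establish \emph{weak} measurability of $\varphi$ (the lower inverse of every open set is measurable), and then concludes by citing \cite[Theorem~18.6]{charalambos2013infinite}, which states that a weakly measurable, closed-valued correspondence into a separable metrizable space has a measurable graph. You instead prove graph measurability directly: writing $\on{gr}\varphi=\bigcap_{k}\bigcup_{n}\{(\sigma,e)\colon d_E(g(f_n,\sigma),e)<1/k\}$ and checking that each set in the decomposition lies in $\mathcal{G}\otimes\mathcal{B}(E)$, which uses the joint measurability of $(\sigma,e)\mapsto d_E(g(f_n,\sigma),e)$, and this in turn is exactly where separability of $E$ enters (to identify $\mathcal{B}(E)\otimes\mathcal{B}(E)$ with $\mathcal{B}(E\times E)$). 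Your version is essentially an inlined proof of the cited theorem, so it is more self-contained and makes the role of separability of $E$ transparent; the paper's version is shorter by delegating to a standard reference. Both are equally rigorous.
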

\begin{proof}
	First, we show that $\varphi$ is weakly measurable, that is, for every open set $U \subset E$ the lower inverse $\varphi^l(U) := \{ \sigma \in \Sigma \colon \varphi(\sigma) \cap U \neq \emptyset \}$
	is measurable. As $F$ is compact, there exists a countable dense subset $\{f_n \colon n \in \mathbb{N} \} \subset F$.
	Since $g$ is a Carath\'eodory function, the functions $\{g_n \colon n \in \mathbb{N} \}$, where $g_n(\sigma) := g(f_n, \sigma)$, form a Castaing representation of $g(F, \cdot) \subset E$, i.e., a countable family of measurable functions such that
	$$ \on{cl} ( \{ g_n( \sigma)\colon n \in \mathbb{N} \} ) = g(F, \sigma), $$
	for every $\sigma \in \Sigma$.
	Thus, for each open subset $U \subset E$,
	\begin{align*}
	\{ \sigma \in \Sigma \colon \varphi(\sigma) \cap U \neq \emptyset \} & = \{ \sigma \in \Sigma \colon g(F, \sigma) \cap U \neq \emptyset \} 
	= \bigcup_{n \in \mathbb{N}} \{ g_n \in U \} \in \mathcal{G}. 
	\end{align*}
	Moreover, as $\varphi$ is compact-valued, and in particular closed-valued, it follows from \cite[Theorem~18.6]{charalambos2013infinite} that $\varphi$ has measurable graph.
\end{proof}
Moreover, also our second standing assumption follows from some conditions above.
\begin{lemma} \label{lem: SA2 holds}
	If the Conditions \ref{cond: LG} and \ref{cond: joint continuity in all} hold, then \(\cA(t, \omega) \not = \emptyset\) for all \((t, \omega) \in \of 0, \infty\of\), i.e., Standing Assumption \ref{SA: non empty} holds.
\end{lemma}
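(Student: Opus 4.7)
The strategy is to exhibit an explicit element of $\cA(t,\omega)$ by fixing a single control value and solving the associated path-dependent martingale problem. Picking any $f_0 \in F$ (tacitly non-empty, as otherwise the conclusion trivially fails) and setting $b_0 := b(f_0, \cdot, \cdot)$ and $a_0 := a(f_0, \cdot, \cdot)$, Condition \ref{cond: joint continuity in all} ensures that $b_0$ and $a_0$ are jointly continuous in $(s,\alpha)$ with respect to the local uniform topology on $\Omega$, and Condition \ref{cond: LG} provides the linear growth bound
\[
\|b_0(s, \alpha)\|^2 + \on{tr}\big[a_0(s, \alpha)\big] \leq C_T \Big(1 + \sup_{r \leq s}\|\alpha(r)\|^2 \Big), \quad (s, \alpha) \in \of 0, T \gs.
\]

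The construction then proceeds in two steps. First, by a classical Skorokhod/Stroock--Varadhan-type weak existence theorem for continuous path-dependent martingale problems, one obtains on some auxiliary probability space a continuous process $Z = (Z_s)_{s \geq t}$ with $Z_t = \omega(t)$ that is a semimartingale on $[t, \infty)$ whose characteristics are absolutely continuous with densities $b_0(\cdot, \widetilde Z)$ and $a_0(\cdot, \widetilde Z)$, where $\widetilde Z \in \Omega$ denotes the concatenation
\[
\widetilde Z_s := \omega(s) \1_{[0, t)}(s) + Z_s \1_{[t, \infty)}(s).
\]
Second, letting $P$ denote the law of $\widetilde Z$ on $(\Omega, \mathcal F)$, one has $P(X^t = \omega^t) = 1$ by construction, and under $P$ the shifted process $X_{\cdot+t}$ is a continuous semimartingale with absolutely continuous characteristics of densities $b(f_0, \cdot+t, X)$ and $a(f_0, \cdot+t, X)$, hence $P \in \fPas(t)$. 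Since $\omega \otimes_t X = X$ on the $P$-full-measure event $\{X^t = \omega^t\}$, these densities lie in $\Theta(\cdot+t, \omega \otimes_t X)$, witnessing $P \in \cA(t, \omega)$.

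The main technical point is precisely the path-dependent weak-existence theorem used in the first step. Its standard ingredients are Euler--Peano piecewise-constant approximations, $L^2$ a priori estimates via Gronwall from the linear growth bound, tightness in $\mathfrak{P}(\Omega)$ from Aldous-type criteria, and identification of the limit. The only subtlety beyond the Markovian case is that one needs continuous dependence of $(b_0, a_0)$ on the entire past trajectory in the local uniform topology — which is exactly what Condition \ref{cond: joint continuity in all} provides — in order to pass to the limit $(b_0(\cdot, \widetilde Z^n), a_0(\cdot, \widetilde Z^n)) \to (b_0(\cdot, \widetilde Z), a_0(\cdot, \widetilde Z))$ along the approximating processes, after applying Skorokhod's representation. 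Modulo citing this existence result, the concatenation with $\omega^t$ and the reading-off of the characteristics conclude.
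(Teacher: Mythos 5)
Your proof is correct and follows essentially the same route as the paper: fix a single control value $f_0 \in F$, apply Skorokhod's weak existence theorem (the paper cites \cite[Theorem 4, p. 265]{skorokhod}) for the continuous path-dependent SDE whose coefficients are continuous and of linear growth under Conditions \ref{cond: LG} and \ref{cond: joint continuity in all}, and read off that the resulting law lies in the uncertainty set. The only cosmetic difference is that you build a measure in $\cA(t,\omega)$ directly by gluing the constructed process after time $t$ to the past $\omega^t$, whereas the paper's Lemma \ref{lem: worst case} produces an element of $\cR(t,\omega)$ (a law starting at time $0$ from $\delta_{\omega(t)}$) and then relies on the correspondence Lemmas \ref{lem: connection Q and R}, \ref{lem: connection Q and C}, and \ref{lem: jacod restatements} to transfer back to $\cA(t,\omega)$.
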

\begin{proof}
	This follows from Lemma \ref{lem: worst case} below.
\end{proof}

\section{The Dynamic Programming Principle} \label{sec: DPP}
We now define a nonlinear expectation and prove the dynamic programming principle (DPP), i.e., the tower property.
Suppose that \(\psi \colon \Omega \to [- \infty, \infty]\) is an upper semianalytic function, i.e., \(\{\omega \in \Omega \colon \psi (\omega) > c\}\) is analytic for every \(c \in \mathbb{R}\),
and define the so-called \emph{value function} by
\[
v (t, \omega) := \sup_{P \in \cA (t, \omega)} E^P \big[ \psi \big], \quad (t, \omega) \in \of 0, \infty\of.
\]
For every finite stopping time \( \tau \), we set \( v( \tau, X ) := v(\tau(X), X) \).
\begin{theorem}[Dynamic Programming Principle] \label{theo: DPP}
	The value function \(v\) is upper semianalytic. Moreover, for every pair \((t, \omega) \in \of 0, \infty\of \) and every stopping time \(\tau\) with \(t \leq \tau < \infty\), we have 
	\begin{align} \label{eq: DPP}
	v(t, \omega) = \sup_{P \in \cA (t, \omega)} E^P \big[ v (\tau, X) \big]. 
	\end{align}
\end{theorem}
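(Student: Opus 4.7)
The plan is to apply the abstract dynamic programming principle from El Karoui--Tan \cite{ElKa15}. That framework reduces the verification of \eqref{eq: DPP} and the upper semianalyticity of \(v\) to three structural properties of the family \((\cA(t,\omega))_{(t,\omega) \in \of 0, \infty\of}\): (i) measurability, i.e., the graph \(\{(t,\omega, P) : P \in \cA(t,\omega)\}\) is an analytic subset of \(\of 0, \infty\of \times \mathfrak{P}(\Omega)\); (ii) stability under conditioning, meaning that for \(P \in \cA(t,\omega)\), a finite stopping time \(\tau \geq t\), and a regular conditional probability \((P^\tau_{\omega'})_{\omega' \in \Omega}\) of \(P\) given \(\mathcal{F}_\tau\), we have \(P^\tau_{\omega'} \in \cA(\tau(\omega'), \omega')\) for \(P\)-a.e. \(\omega'\); and (iii) stability under concatenation, meaning that if \(P \in \cA(t,\omega)\) and \(\omega' \mapsto Q_{\omega'}\) is an \(\mathcal{F}_\tau\)-measurable kernel with \(Q_{\omega'} \in \cA(\tau(\omega'), \omega')\) for \(P\)-a.e. \(\omega'\), then the pasted measure \(P \otimes_\tau Q\) again belongs to \(\cA(t,\omega)\). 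Once (i)--(iii) are in place, the conclusion of the theorem is a black-box consequence of \cite{ElKa15}.

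For (i), the strategy is to exhibit \(\cA(t,\omega)\) as the intersection of the set \(\fPas(t)\) of semimartingale laws after \(t\) with absolutely continuous characteristics, the set of measures satisfying \(P(X^t = \omega^t) = 1\), and the set carved out by the \(\Theta\)-condition on the differential characteristics. The second and third conditions are easy (the former by continuity of evaluation maps, the latter using Standing Assumption~\ref{SA: meas gr} together with a Fubini argument on \(\llambda \otimes P\)); the hard work is the analytic measurability of \((t,P) \mapsto \mathbbm{1}\{P \in \fPas(t)\}\) and the Borel measurability of the map \(P \mapsto (dB^P_{\cdot + t}/d\llambda, dC^P_{\cdot + t}/d\llambda)\) on \(\fPas(t)\), jointly in \(t\). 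This is where the extensions of \cite{neufeld2014measurability} to a dynamic framework enter: their results treat the case \(t = 0\), and one needs to verify that the same measurability statements remain valid when the starting time varies, presumably by reducing to a shifted canonical setup via the map \((t,\omega) \mapsto \omega_{\cdot + t}\) and checking that the constructions of \cite{neufeld2014measurability} depend measurably on \(t\).

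For (ii), given \(P \in \cA(t,\omega)\) and \(\tau \geq t\), I would start from the fact (in the flavor of Stroock--Varadhan) that \(P\)-a.s.~the regular conditional probability \(P^\tau_{\omega'}\) lies in \(\fPas(\tau(\omega'))\), again as a dynamic extension of \cite{neufeld2014measurability}. The conditional characteristics of \(X_{\cdot + \tau}\) under \(P^\tau_{\omega'}\) agree \(P\)-a.s.~with the shifted characteristics under \(P\), so the \(\Theta\)-inclusion on differential characteristics transfers from \(P\) to \(P^\tau_{\omega'}\). Finally, \(P^\tau_{\omega'}(X^{\tau(\omega')} = (\omega')^{\tau(\omega')}) = 1\) holds \(P\)-a.s. by the definition of regular conditional probabilities. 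For (iii), one uses the standard concatenation of kernels: \(P \otimes_\tau Q\) agrees with \(P\) up to \(\tau\), hence still gives mass one to \(\{X^t = \omega^t\}\) and inherits the characteristics of \(P\) on \(\of t, \tau\gs\); after \(\tau\) the characteristics coincide with those of \(Q_{\omega'}\), so the inclusion in \(\Theta\) follows from the assumption on \(Q\). The fact that concatenation preserves \(\fPas(t)\) is again a direct consequence of the shifted analogue of \cite[Section~2]{neufeld2014measurability}.

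The main obstacle will be step (i), and within it the measurability of the semimartingale property and characteristics jointly in \(t\) and \(P\); the rest of the argument is organisational. Once (i)--(iii) are established, the upper semianalyticity of \(v\) and the identity \eqref{eq: DPP} follow from \cite{ElKa15} upon noting that \(\psi\) is upper semianalytic and bounded from above is not required by that theorem (only upper semianalyticity and the measurability/stability of \(\cA\)).
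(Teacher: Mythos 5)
Your proposal is correct and follows essentially the same route as the paper: apply \cite[Theorem~2.1]{ElKa15} and verify the measurable-graph, conditioning-stability and pasting-stability hypotheses, reducing the time-dependent issue for \(\fPas(t)\) to the static case via the shift operators and the measurability results of \cite{neufeld2014measurability}. The only detail you leave open that the paper makes explicit is that preservation of the semimartingale property under pasting is obtained via the Bichteler--Dellacherie theorem, but this is a technical ingredient inside the plan you describe rather than a different strategy.
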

The value function can be interpreted as a nonlinear expectation \(\mathcal{E}\) given by 
\[
\mathcal{E}_t (\psi) (\omega) := v (t, \omega), \qquad (t, \omega) \in \of 0, \infty\of.
\]
The DPP in Theorem \ref{theo: DPP} provides the tower rule for \(\mathcal{E}\). Namely, \eqref{eq: DPP} means that 
\begin{align} \label{eq: tower property}
\mathcal{E}_t (\psi) (\omega) = \mathcal{E}_t (\mathcal{E}_\tau (\psi)) (\omega), \quad t \leq \tau \leq \infty.
\end{align}
By its pathwise structure, the equality  \eqref{eq: tower property} also implies that
	\[
	\mathcal{E}_{\sigma} (\psi) = \mathcal{E}_{\sigma} (\mathcal{E}_\tau (\psi)),
	\]
	for all finite stopping times \(\tau \geq \sigma\).
To prove Theorem \ref{theo: DPP} we use a general theorem from \cite{ElKa15}. The work lies in the verification of the prerequisites, which are (i) a measurable graph property, (ii) a stability property for conditioning, and (iii) a stability property for pasting. The proof is given in Section \ref{sec: pf DPP} below.

\begin{example} \label{ex: levy, mark, delay}
		In the following we mention some examples of stochastic models that are covered by our framework. We stress that it includes many previously studied frameworks but also some new ones which are of interest for future investigations.
		\begin{enumerate}
			\item[\textup{(i)}]
			The case where \(\Theta (t, \omega) \equiv \Theta\) is independent of time \(t\) and path \(\omega\) corresponds to the generalized \(G\)-Brownian motion as introduced in \cite{peng2010}, cf. also \cite{neufeld2017nonlinear} for a nonlinear L\'evy setting {\em with jumps}.
			\item[\textup{(ii)}]
			The situation where \(\Theta (t, \omega) \equiv \Theta (\omega (t))\) depends on \((t, \omega)\) only through the value \(\omega (t)\) corresponds to a Markovian setting that has, for instance, been studied in \cite{CN22b, hol16}.
			\item[\textup{(iii)}]
            The example where, for \((f, t, \omega) \in F \times \of 0, \infty\of\) and \( r > 0\), 
            \begin{align*}
			b (f, t, \omega) &:= b_0 (f) \omega (t) + \int_{ (t - r) \vee 0}^t b_1 (f, s) \omega (s)  ds, \qquad
			a (f, t, \omega) := a_0 (f),
			\end{align*}
   	       with Borel functions 
            \[ b_0 \colon F \to \bR, \quad b_1 \colon F \times \bR_+ \to \bR, \quad a_0 \colon F \to \bR_+.
            \]
            This corresponds to a class of linear stochastic delay differential equations with parameter uncertainty (that is captured by the \(F\)-dependence of \(b\) and \(a\)).
			Control problems for such a path-dependent setting were studied in \cite{gozzi}.
            From a modeling perspective, natural assumptions such as sign-constraints and ellipticity can be incorporated through the functions \(b_0, b_1\) and \(a_0\), respectively.
            More references involving control problems for delay equations can be found in \cite{cosso}. 
			Our framework seems to be the first that captures stochastic delay equations from the perspective of nonlinear stochastic processes.
		\end{enumerate}
	\end{example}

Given the DPP, we proceed studying more properties of \(v\) and \(\mathcal{E}\). In the following section we identify \(v\) as a viscosity solution to a certain nonlinear path-dependent partial differential equation (PPDE) and, in Section \ref{sec: MP}, we identify \(\mathcal{E}\) as a solution to a nonlinear martingale problem.


\section{The nonlinear Kolmogorov Equation} \label{sec: vis}
In the following we discuss the relation of the value function to a path-dependent Kolmogorov type partial differential equation. This section is structured as follows. In Section~\ref{sec: test functions} we define the class of test functions for the concept of Crandall--Lions type viscosity solutions in our path-dependent setting. The Kolmogorov type equation is introduced in Section \ref{sec: nonlinear KE}. Finally, we present our main results in Section \ref{sec: main results viscosity}.
\subsection{The class of test functions} \label{sec: test functions}
In the first part of this section we introduce the set of test functions for the concept of Crandall--Lions type viscosity solutions in our path-dependent setting.  
The following definitions are adapted from \cite{cosso, cosso2}.
Let \(T >0\) and \(t_0 \in [0,T)\).
Further, let \(D(\bR_+; \bR^\d)\) be the space of \cadlag functions from \(\bR_+\) into \(\mathbb{R}^\d\).
We define \(\Lambda(t_0) := [t_0, T] \times D(\bR_+; \bR^\d)\) and, on \([0, T] \times D(\bR_+; \bR^\d)\), we further define the pseudometric \(d\) as 
\begin{align} \label{eq: d metric}
d ( (t,\omega), (s, \omega')) := | t - s | + \sup_{r \in [0, T]} \|\omega(r \wedge t) - \omega'( r \wedge s) \|.
\end{align}
We denote the restriction of \(d\) to \(\Lambda(t_0)\) again by \(d\).
For a map \(F \colon \Lambda(t_0) \to \bR\) we say that \(F\) admits a \emph{horizontal derivative} at \((t, \omega) \in \Lambda(t_0)\) with \(t < T\) if
\[
\p F (t, \omega) := \lim_{h \searrow 0} \frac{ F(t + h, \omega(\cdot \wedge t)) - F(t, \omega(\cdot \wedge t))}{h}
\]
exists.
At \(t = T\), the horizontal derivative is defined as 
\[
\p F(T, \omega) := \lim_{h \nearrow T} \p F(h,\omega).
\]
Further, we say that 
\(F\) admits a \emph{vertical derivative} at \((t, \omega) \in \Lambda(t_0)\) if 
\[
\partial_i F (t, \omega) := \lim_{h \to 0} \frac{F (t, \omega + h e_i\1_{[t,T]}) - F (t, \omega)}{h}, \quad i = 1, 2, \dots, \d,
\]
exist, where \(e_1, \dots, e_\d\) are the standard unit vectors in \(\bR^\d\).
Accordingly, the second vertical derivatives \(\partial^2_{ij} F(t,\omega), i, j = 1, \dots, \d,\) at \((t, \omega) \in \Lambda(t_0)\) are defined as
\[
\partial^2_{ij} F(t,\omega) := \partial_i (\partial_j F)(t, \omega).
\]
We write \(\nabla F := (\partial_1 F, \dots, \partial_\d F)\) for the {\em vertical gradient} and \(\nabla^2 F := (\partial_{ij}^2 F)_{i, j = 1, \dots, \d}\) for the {\em vertical Hessian matrix}.

Next, we denote by \(C^{1,2}(\Lambda(t_0); \bR)\) the set of functions \(F \colon \Lambda(t_0) \to \bR\), continuous with respect to \(d\), 
such that
\[
\p F, \nabla F, \nabla^2 F
\]
exist everywhere on \(\Lambda(t_0)\) and are continuous with respect to \(d\).

The set 
\(C^{1,2}( \of t_0, T \gs ; \bR)\) consists of functions 
\(F \colon \of t_0, T \gs  \to \bR \)  
such that there exists \(\hat{F} \in C^{1,2}(\Lambda(t_0); \bR)\) with
\[
F(t, \omega) = \hat{F}(t,\omega), \quad (t, \omega) \in \of t_0, T \gs .
\]
In this case, we define, for \((t, \omega) \in \of t_0, T \gs \),
\[
\p F(t, \omega) := \p \hat{F}(t, \omega), \quad
\nabla F(t,\omega) := \nabla \hat{F}(t,\omega), \quad
\nabla^2 F(t,\omega) := \nabla^2 \hat{F}(t,\omega).
\]
By \cite[Lemma 2.1]{cosso}, the derivatives \(\p F, \nabla F, \nabla^2 F \) are well-defined for \(F \in C^{1,2}(\of t_0, T \gs ; \bR)\).

Finally, the set \(C_{pol}^{1,2}( \of t_0, T \gs ; \bR) \) consists of all \(F \in C^{1,2}( \of t_0, T \gs ; \bR)\) such that there exist constants \(C, q \geq 0\) with
\[
| \p F(t, \omega) | + \| \nabla F(t,\omega) \| + \on{tr} \big[ \nabla^2 F(t,\omega) \big] \leq C \Big( 1 + \sup_{r \in [t_0,T]} \| \omega( r \wedge t ) \|^q\Big)
\]
for all \((t, \omega) \in \of t_0, T \gs \).

\begin{remark}
	Every map \(F \colon [0,T] \times D(\bR_+; \bR^\d) \) that is continuous with respect to \(d\) is \emph{non-anticipative}, i.e., 
	\(F (t, \omega) = F(t, \omega (\cdot \wedge t))\) for all \((t, \omega) \in [0,T] \times D(\bR_+;\bR^\d)\).
\end{remark}

\subsection{The nonlinear Kolmogorov equation} \label{sec: nonlinear KE}
For \((t, \omega,\phi) \in \of 0, T \gs \hspace{0.05cm} \times\hspace{0.05cm} C^{1,2}(\of 0, T \gs ; \bR ) \), we set 
\begin{align*}
G(t, \omega, \phi) := \sup \Big\{ \langle \nabla \phi (t, \omega), b (f, t, \omega) \rangle
+ \tfrac{1}{2} \on{tr} \big[ \nabla^2\phi (t, \omega) a (f, t, \omega) \big]\colon f \in F \Big\}.
\end{align*}
One of our goals is the identification of the value function \(v\) as a so-called \emph{viscosity solution} to the nonlinear~PPDE 
\begin{equation} \label{eq: PIDE}
\begin{cases}   
\p v (t, \omega) + G (t, \omega, v) = 0, & \text{for } (t, \omega) \in \of 0, T\of, \\
v (T, \omega) = \psi (\omega), & \text{for } \omega \in \Omega,
\end{cases}
\end{equation}
where \(\psi \colon \Omega \to \mathbb{R}\) is a bounded continuous function such that \(\psi (\omega) = \psi (\omega (\cdot \wedge T))\).
For notational convenience, we fix the terminal function \(\psi\) from now on.

In contrast to the classical case, where the solution runs over time and space, we consider viscosity solutions which run over time and path. Let us provide a precise definition of a viscosity solution in our setting.

\begin{definition}[Viscosity Solution over Time and Path]
	A function \(u \colon \of 0, T\gs \to \mathbb{R}\) is said to be a \emph{weak sense viscosity subsolution} to \eqref{eq: PIDE} if the following two properties hold:
	\begin{enumerate}
		\item[\textup{(a)}] \(u(T, \cdot) \leq \psi\);
		\item[\textup{(b)}]
		for any \((t,\omega) \in \of 0, T \of \) and \( \phi \in C^{1,2}_{pol}(\of t, T \gs ; \bR )\) satisfying
		\[
		0 = (u-\phi)(t,\omega) = \sup \{ (u-\phi)(s,\omega') \colon (s,\omega') \in \of t, T \gs  \},
		\]
		we have
		\(
		\p \phi (t, \omega) + G (t, \omega, \phi) \geq 0.
		\)
	\end{enumerate}
	Moreover, a function \(u \colon \of 0, T\gs \to \mathbb{R}\) is said to be a \emph{weak sense viscosity supersolution} to~\eqref{eq: PIDE} if the following two properties hold:
	\begin{enumerate}
		\item[\textup{(a)}] \(u(T, \cdot) \geq \psi\);
		\item[\textup{(b)}]
		for any \((t,\omega) \in \of 0, T \of\) and \( \phi \in C^{1,2}_{pol}(\of t, T \gs ; \bR)\) satisfying
		\[
		0 = (u-\phi)(t,\omega) = \inf \{ (u-\phi)(s,\omega') \colon (s,\omega') \in \of t, T \gs \},
		\]
		we have
		\(
		\p \phi (t, \omega) + G (t, \omega, \phi) \leq 0.
		\)
	\end{enumerate}
	Further, \(u\) is called \emph{weak sense viscosity solution} if it is a weak sense viscosity sub- and supersolution. Finally, a continuous weak sense viscosity solution is called \emph{viscosity solution}.
\end{definition}

\subsection{Main Results} \label{sec: main results viscosity}
Before we present our main results, we need a last bit of notation. 
For \(\omega, \omega' \in \Omega\) and \(t \in \mathbb{R}_+\), we define the concatenation
\[
\omega\ \widetilde{\otimes}_t\ \omega' :=  \omega \1_{[ 0, t)} + (\omega (t) + \omega' (\cdot - t) - \omega' (0)) \1_{[t, \infty)},
\]
and the set
\begin{equation}\label{eq: def R}\begin{split}
\mathcal{R}(t,\omega) := \Big\{ P \in \fPas \colon P &\circ X_0^{-1} = \delta_{\omega (t)}, \\
&(\llambda \otimes P)\text{-a.e. } (dB^{P} /d\llambda, dC^{P}/d\llambda) \in \Theta (\cdot + t, \omega \ \widetilde{\otimes}_t\ X)  \Big\}.
\end{split}
\end{equation}
We are in the position to present the main results of this section. Let us start with the viscosity solution part. 

\begin{theorem} \label{theo: viscosity}
	Assume that \(F\) is a compact metrizable space and that
	the Conditions \ref{cond: LG}, \ref{cond: joint continuity in all} and \ref{cond: convexity} hold. Then, the value function \(v\) is a weak sense viscosity solution to \eqref{eq: PIDE}. If the correspondence \((t, \omega) \mapsto \cR (t, \omega)\) is continuous (lower, upper hemicontinuous),\footnote{see \textup{\cite[Definition~17.2]{charalambos2013infinite}}.} then \(v\) is continuous (lower, upper semicontinuous).
\end{theorem}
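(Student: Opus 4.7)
The plan is to tackle the viscosity property and the semicontinuity part separately. Throughout, I would use the reformulation
\[
v(t,\omega)=\sup_{P\in\cR(t,\omega)}E^P\big[\psi(\omega\ \widetilde{\otimes}_t\ X)\big],
\]
the DPP from Theorem~\ref{theo: DPP}, the functional It\^o formula for $\phi\in C^{1,2}_{pol}$, Skorokhod's existence theorem (which under Conditions~\ref{cond: LG} and \ref{cond: joint continuity in all} produces, for each fixed $f\in F$, a semimartingale law $P_f\in\cA(t_0,\omega_0)$ with characteristics $(b(f,\cdot,\cdot),a(f,\cdot,\cdot))$), and the $d$-continuity of $(t,\omega)\mapsto G(t,\omega,\phi)$, which follows from the joint continuity of $(b,a)$, the compactness of $F$, and Berge's maximum theorem.

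For the subsolution property, suppose for contradiction that $\p\phi(t_0,\omega_0)+G(t_0,\omega_0,\phi)\leq-2\delta$ for some $\delta>0$. Continuity of $\p\phi+G$ in $d$ gives an open neighborhood $U\ni(t_0,\omega_0)$ on which $\p\phi+G\leq-\delta$. With $\tau_h:=(t_0+h)\wedge\tau_U$ and an $\varepsilon$-optimal $P_\varepsilon\in\cA(t_0,\omega_0)$ coming from the DPP, combining $v\leq\phi$ on $\of t_0, T \gs$, the functional It\^o formula and the pointwise inclusion $(dB^{P_\varepsilon}/d\llambda,dC^{P_\varepsilon}/d\llambda)\in\Theta$ yields
\[
\phi(t_0,\omega_0)-\varepsilon\leq E^{P_\varepsilon}[\phi(\tau_h,X)]\leq\phi(t_0,\omega_0)-\delta\, E^{P_\varepsilon}[\tau_h-t_0],
\]
and a moment bound from Condition~\ref{cond: LG}, uniform over $\cA(t_0,\omega_0)$, then forces $E^{P_\varepsilon}[\tau_h-t_0]\geq h/2$ for small $h$, which is a contradiction for $\varepsilon<\delta h/2$. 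For the supersolution property, fix $f\in F$ and apply the DPP together with $\phi\leq v$ to the specific law $P_f$ and the deterministic stopping time $t_0+h$ to obtain $E^{P_f}[\phi(t_0+h,X)-\phi(t_0,\omega_0)]\leq 0$. Expanding via It\^o with the explicit characteristics of $P_f$, dividing by $h$, and passing to $h\downarrow 0$ via Lebesgue's differentiation theorem and dominated convergence (justified by the polynomial growth built into $C^{1,2}_{pol}$ and the moment bounds from Condition~\ref{cond: LG}) yields, for every $f\in F$,
\[
\p\phi(t_0,\omega_0)+\langle\nabla\phi(t_0,\omega_0),b(f,t_0,\omega_0)\rangle+\tfrac{1}{2}\tr\big[\nabla^2\phi(t_0,\omega_0)\,a(f,t_0,\omega_0)\big]\leq 0,
\]
and $\sup_{f\in F}$ gives $\p\phi(t_0,\omega_0)+G(t_0,\omega_0,\phi)\leq 0$. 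The terminal condition $v(T,\cdot)=\psi$ is automatic since $\psi(\omega)=\psi(\omega(\cdot\wedge T))$ and $P(X^T=\omega^T)=1$ for $P\in\cA(T,\omega)$. Condition~\ref{cond: convexity} enters when identifying $G$ as the exact nonlinearity for $\cA$: without convexity one would need to replace $\Theta$ by its closed convex hull in the definition of $G$.

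For the regularity claim, set $J(t,\omega,P):=E^P[\psi(\omega\ \widetilde{\otimes}_t\ X)]$, so that $v(t,\omega)=\sup_{P\in\cR(t,\omega)}J(t,\omega,P)$. Boundedness and continuity of $\psi$, combined with continuity of the concatenation $(t,\omega,\omega')\mapsto\omega\ \widetilde{\otimes}_t\ \omega'$, imply joint continuity of $J$ on $[0,T]\times\Omega\times\mathfrak{P}(\Omega)$, with the weak topology on $\mathfrak{P}(\Omega)$. One-sided forms of Berge's maximum theorem then translate hemicontinuity of $\cR$ directly into semicontinuity of $v$: upper hemicontinuity combined with the compact-valuedness of $\cR$ yields upper semicontinuity of $v$, lower hemicontinuity yields lower semicontinuity, and continuity yields continuity. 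The hardest single step in the whole argument is the uniform exit-time estimate appearing in the subsolution part, which rests on a BDG--Gronwall moment bound for $X$ that is uniform in $P\in\cA(t_0,\omega_0)$ and uses only Condition~\ref{cond: LG}; a secondary delicate point is the uniform-integrability justification needed to commute $h\downarrow 0$ with the expectation in the supersolution step, which is again controlled by Condition~\ref{cond: LG} together with the polynomial growth built into $C^{1,2}_{pol}$.
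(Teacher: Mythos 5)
Your supersolution argument and your Berge-type regularity argument are essentially the paper's own; the genuinely different step is the subsolution. The paper does not argue by contradiction and introduces no exit time: starting from the DPP at deterministic times $u+t_0$, it applies the pathwise It\^o formula, the pointwise inclusion of the differential characteristics in $\Theta$, and the $\sup$-bound defining $\mathfrak{G}$ to obtain
\[
0 \leq \frac{1}{u}\int_0^u \sup_{P\in\cR(t_0,\omega_0)} E^P\big[\mathfrak{G}(s,X)\big]\,ds,
\]
and then sends $u\searrow 0$ via Lebesgue's differentiation theorem; the continuity needed for this step (Lemma~\ref{lem: Lebesgue theorem for G}) rests on Berge's maximum theorem together with compactness of $\cR(t_0,\omega_0)$, which comes from Theorem~\ref{theo: upper hemi} and hence from Condition~\ref{cond: convexity}. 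Your exit-time contradiction is the classical viscosity-solution route and is equally valid here: the functional It\^o formula, the $d$-continuity of $\p\phi + G(\cdot,\cdot,\phi)$ (which holds because $b$, $a$ are non-anticipative, jointly continuous, and $F$ is compact), the DPP at $\tau_h$, and the BDG--Gronwall moment bound from Lemma~\ref{lem: rel comp} are all available. Interestingly, your route does not visibly invoke compactness of $\cR(t_0,\omega_0)$ in the subsolution step, whereas the paper's does; what your route pays for this is the extra uniform exit-time estimate, which the paper avoids. One small correction in the bookkeeping: your remark that Condition~\ref{cond: convexity} is needed to identify $G$ as the exact nonlinearity does not match the paper's actual usage — in the paper convexity enters only through Theorem~\ref{theo: upper hemi} (compact values of $\cR$), which feeds into Lemma~\ref{lem: Lebesgue theorem for G} for the subsolution and into the Berge argument for the regularity claim.
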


For a first result regarding upper semicontinuity of \( v\) in a Markovian framework beyond the L{\'e}vy case (albeit under uniform boundedness and global Lipschitz assumptions) we refer to \cite[Lemma 4.42]{hol16}. The thesis \cite{hol16} contains no explicit conditions for lower semicontinuity that appears to be related to martingale problems with possibly non-regular coefficients, which are difficult to study, see also  \cite[Remark~4.43]{hol16}, \cite[Remark 3.4]{K19} and  \cite[Remark~5.4]{K21} for comments in this direction. 
	
	As shown in Theorem \ref{theo: viscosity}, (lower, upper) hemicontinuity of \( (t, \omega) \mapsto \cR(t, \omega) \) provides (lower, upper) semicontinuity of the value function \( (t, \omega) \mapsto v(t,\omega) \). 
	In the framework of nonlinear L{\'e}vy processes from \cite{neufeld2017nonlinear}, reduced to our path-continuous setting, continuity of \(\cR\) is rather straightforward to verify. That is, in case \( \Theta \subset \mathbb{R}^\d \times \bS \) is a convex and compact set, the correspondence
	\[
	\cR(t, \omega) = \big\{ P \in \fPas \colon  P \circ X_0^{-1} = \delta_{\omega (t)}, \ (\llambda \otimes P)\text{-a.e. } (dB^{P} /d\llambda, dC^{P}/d\llambda) \in \Theta \big\},
	\]
	is continuous. We present the details in Appendix \ref{subsec: levy}.

	In the following, we prove more general conditions for upper and lower hemicontinuity of the correspondence \((t, \omega) \mapsto \cR (t, \omega)\) which lead to explicit conditions for the continuity of the value function and thereby identify it as a viscosity solution to the nonlinear PPDE~\eqref{eq: PIDE}.

	\begin{theorem}[Upper hemicontinuity of \(\cR\)] \label{theo: upper hemi}
		Assume that \(F\) is a compact metrizable space and that the Conditions \ref{cond: LG}, \ref{cond: joint continuity in all} and \ref{cond: convexity} hold. Then, the correspondence \((t, \omega) \mapsto \cR (t, \omega)\) is upper hemicontinuous with compact values.
	\end{theorem}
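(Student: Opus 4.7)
\medskip

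\textbf{Proof plan.} Since $\mathfrak{P}(\Omega)$ is metrizable, upper hemicontinuity with compact values is equivalent to the sequential criterion: for every sequence $(t_n, \omega_n) \to (t, \omega)$ and every $P_n \in \mathcal{R}(t_n, \omega_n)$, there is a subsequence $P_{n_k} \to P$ in distribution with $P \in \mathcal{R}(t, \omega)$; taking $(t_n, \omega_n) \equiv (t, \omega)$ yields compactness of the values. So the plan reduces to proving (i) tightness of $\{P_n\}$, and (ii) that any weak limit $P$ lies in $\mathcal{R}(t, \omega)$.

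\textbf{Step 1: Tightness.} Under $P_n \in \mathcal{R}(t_n, \omega_n)$, the coordinate process starts at $\omega_n(t_n)$ and its characteristics have densities $(\beta^n_s, \gamma^n_s) \in \Theta(s + t_n, \omega_n\ \widetilde{\otimes}_{t_n}\ X)$. Since $\omega_n \to \omega$ locally uniformly, $\sup_{s \leq T} \|\omega_n(s \wedge t_n)\|$ is uniformly bounded, and the linear growth Condition~\ref{cond: LG} applied to $\omega_n\ \widetilde{\otimes}_{t_n}\ X$ together with a Gronwall-Burkholder argument gives a uniform bound $\sup_n E^{P_n}[\sup_{s \leq T}\|X_s\|^p] < \infty$ for any $p \geq 2$. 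From this one derives uniform $L^p$-bounds on the total variation of $B^{P_n}$ and $C^{P_n}$ on $[0,T]$, which yields tightness of $\{P_n\}$ through the classical semimartingale tightness criterion (e.g.\ Aldous or Jacod--Shiryaev VI.4). Extract a subsequence, still denoted $P_n$, with $P_n \to P$ weakly.

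\textbf{Step 2: Identification of the limit.} The marginal at time $0$ is $P_n \circ X_0^{-1} = \delta_{\omega_n(t_n)} \to \delta_{\omega(t)}$ by continuity of evaluation, so $P \circ X_0^{-1} = \delta_{\omega(t)}$. It remains to show that $P \in \mathfrak{P}_{\text{sem}}^{\text{ac}}$ with $(\llambda \otimes P)$-a.e.\ $(dB^P/d\llambda, dC^P/d\llambda) \in \Theta(\cdot + t, \omega\ \widetilde{\otimes}_t\ X)$. The key observation is that, thanks to joint continuity (Condition~\ref{cond: joint continuity in all}) and the linear growth bounds, the ``candidate densities'' $(\beta^n, \gamma^n)$ are uniformly bounded on bounded subsets of $\Omega$, and hence the finite measures $B^{P_n}$, $C^{P_n}$ converge in a suitable sense along the weakly convergent subsequence $P_n \to P$ to the characteristics of $X$ under $P$. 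This identifies $P$ as a semimartingale law with characteristics that are limits of those given by the $(\beta^n, \gamma^n)$. To conclude that the limiting densities are pointwise (in $(s,\omega)$) in $\Theta(s+t, \omega\ \widetilde{\otimes}_t\ X)$, invoke the joint continuity in $(t, \omega, f)$ together with the compactness of $F$: along a Castaing representation, $(\beta^n_s, \gamma^n_s) = (b(f^n_s, s + t_n, \omega_n\ \widetilde{\otimes}_{t_n}\ X), a(f^n_s, \ldots))$; passing to $f^n_s \to f_s$ (up to selecting measurable subsequences) would give pointwise limits in $\Theta(s+t, \omega\ \widetilde{\otimes}_t\ X)$. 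For weak limits one uses Mazur's lemma: convex combinations of the $(\beta^n, \gamma^n)$ converge strongly to $(dB^P/d\llambda, dC^P/d\llambda)$ in $L^1(\llambda \otimes P)$, and since the target set $\Theta(s + t, \omega\ \widetilde{\otimes}_t\ X)$ is \emph{convex} by Condition~\ref{cond: convexity} and closed (as the continuous image of the compact $F$), the limit still belongs to it $(\llambda \otimes P)$-a.e.

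\textbf{Main obstacle.} The delicate step is Step~2: identifying the characteristics of the weak limit $P$ and showing the densities land in the correct path-dependent set. Two issues must be handled carefully. First, one must pass from weak convergence of $P_n$ to the appropriate convergence of the characteristic densities $(\beta^n, \gamma^n)$, which requires a measurable-graph/Castaing selection argument for the predictable Carath\'eodory functions together with the joint continuity in $(t, \omega)$; this is where Standing Assumption~\ref{SA: meas gr} and Condition~\ref{cond: joint continuity in all} combine. Second, since the base point shifts (one is comparing $\Theta(s + t_n, \omega_n\ \widetilde{\otimes}_{t_n}\ X)$ to $\Theta(s + t, \omega\ \widetilde{\otimes}_t\ X)$) the argument must be carried out uniformly on compacts of $\Omega$, exploiting the joint continuity of $(b,a)$ and the uniform moment bounds on $X$. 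The convexity hypothesis is essential precisely because Mazur's lemma only produces convex combinations in the limit; without convex-valuedness of $\Theta$ one could not conclude that the limiting density remains a selector of $\Theta$.
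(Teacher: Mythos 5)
Your overall architecture (tightness, convergence of the $X_0$-marginal, then using convexity to keep the limiting densities in $\Theta$) matches the paper's, and Step 1 is essentially correct and equivalent to the paper's use of Lemma~\ref{lem: rel comp} and Aldous. However, Step 2 — the identification of the weak limit — has several substantive gaps, and as written it would not go through.

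First, the assertion that ``the finite measures $B^{P_n}$, $C^{P_n}$ converge in a suitable sense'' to the characteristics of $X$ under $P$ does not follow from uniform boundedness of the densities. One must prove tightness of the \emph{joint} laws $P_n \circ (X, B^{P_n}, C^{P_n})^{-1}$ on the enlarged canonical space $\Omega \times \Omega \times C(\bR_+;\bR^{d\times d})$ and pass to a joint weak limit $Q$; the paper then establishes local absolute continuity of the limiting second and third coordinates via a separate lower-semicontinuity argument for the path-Lipschitz modulus, which is not automatic.

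Second, the Mazur's lemma step is not well-posed: the $(\beta^n, \gamma^n)$ are defined under the different measures $\llambda \otimes P_n$, so taking convex combinations in $L^1(\llambda \otimes P)$ is not meaningful without first coupling. Even after Skorokhod coupling onto a common space (which the paper uses), you do not get weak $L^1$-convergence of the densities for free. The paper instead works with the window averages $m(B^0_{s+1/m}-B^0_s,\ C^0_{s+1/m}-C^0_s)$, shows via upper hemicontinuity of $(s,\omega^o)\mapsto\overline{\on{co}}\,\Theta([s,s+1/m],\omega^o)$ that each of these lies $\llambda$-a.e.\ in $\overline{\on{co}}\,\Theta([s+t, s+1/m+t], \omega\ \widetilde{\otimes}_t\ X^0)$, and then shrinks $1/m \to 0$ and uses Lebesgue differentiation plus the intersection identity \eqref{eq: conseq upper hemi}. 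Your Castaing-selection idea ($f^n_s \to f_s$ along measurable subsequences) is not how the paper proceeds and would require a serious measurable-selection construction that you do not supply.

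Third — and this is the point you omit entirely — the limiting processes $(Y^{(2)}, Y^{(3)})$ obtained from the joint weak limit are a priori the characteristics of $X$ only for the \emph{enlarged} filtration generated by $(X, B, C)$, not for the canonical filtration $\F_+$. The paper closes this gap through Stricker's theorem plus the dual predictable projection argument of \cite[Proposition 9.24]{jacod79}, and then uses convexity once more (via \cite[Corollary 8, p. 48]{diestel}) to show the projected densities remain in $\Theta$. Without handling this filtration reduction, the proof of $P \in \fPas$ with densities in $\Theta(\cdot+t, \omega\,\widetilde{\otimes}_t X)$ is incomplete.

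Your identification of the ``main obstacle'' is in the right place, but the sketch offered does not supply a working argument for it; the paper's Steps 2--4 are the real content of the theorem.
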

	
	The following example shows that the conditions from Theorem \ref{theo: upper hemi} are not sufficient for lower hemicontinuity of \((t, \omega) \mapsto \cR(t, \omega)\).
	
	\begin{example}
		Suppose that \(\d = 1, a \equiv 0\) and that \(b (f, t, \omega) \equiv b^\circ (\omega (t))\) for all \((f, t, \omega) \in F \times \of 0, \infty \of\), where \(b^\circ \colon \bR \to \bR\) is any bounded continuous function such that \(b^\circ (x) = \on{sgn} (x) \sqrt{|x|}\) for \(|x| \leq 1\) and which is continuously differentiable off \((-1, 1)\). According to \cite[Exercise~12.4.2]{SV}, whenever \(\omega (t) \not = 0\), the set \(\cR (t, \omega)\) is a singleton \(\{P_{\omega (t)}\}\) and \(\bR \backslash \{0\} \ni x \mapsto P_x\) has a weak limit through (strictly) positive or (strictly) negative values (that converge to zero) but these limits are different. To see that \(\cR\) is not lower hemicontinuous, take \((t^n, \omega^n) := (1/n, \on{id}) \in \of 0, \infty\of\), with \(n \in \mathbb{N}\), and \(P := \lim_{x \nearrow 0} P_x \in \cR (0, \on{id})\). Then, as \(\lim_{n \to \infty} P_{\omega^n (1/n)} = \lim_{n \to \infty} P_{1/n} \not = P\), we get from \cite[Theorem 17.21]{charalambos2013infinite} that \((t, \omega) \mapsto \cR(t, \omega)\) is not lower hemicontinuous. 
	\end{example}
	Next, we show that the correspondence \((t, \omega) \mapsto \cR (t, \omega)\) is lower hemicontinuous in case we impose an additional local Lipschitz condition. For a matrix \(A\), we denote its transposed by \(A^*\) and we denote by \(\|A\|_o\) the operator norm of \(A\). Let \(\mathscr{P}\) be the predictable \(\sigma\)-field on \(\of 0, \infty\of\).
	
	\begin{condition}[Local Lipschitz continuity in the last variable uniformly in the first two] \label{cond: lipschitz}
		There exists a dimension \(\dr \in \mathbb{N}\) and a \(\mathcal{B}(F) \otimes \mathscr{P}\)-measurable function \(\sigma \colon F \times \of 0, \infty\of \hspace{0.05cm} \to \bR^{\d \times \dr}\) such that \(a = \sigma \sigma^*\).
		Furthermore, for every \(T, M > 0\), there exists a constant \(C = C_{T, M} > 0\) such that 
		\[
		\|b (f, t, \omega) - b (f, t, \alpha)\| + \|\sigma (f, t, \omega) - \sigma (f, t, \alpha)\|_o \leq C \sup_{s \in [0, t]} \| \omega (s) - \alpha (s) \|
		\]
		for all \(\omega, \alpha \in \Omega \colon \sup_{s \in [0, t]} \| \omega (s) \| \vee \| \alpha (s) \| \leq M\) and \((f, t) \in F \times [0, T]\).
	\end{condition}
	The following theorem is seemingly the first result regarding lower hemicontinuity and, together with Theorem \ref{theo: viscosity}, the first result on lower semicontinuity of the value function in a path-dependent framework related to nonlinear stochastic processes.
	\begin{theorem}[Lower hemicontinuity of \(\cR\)] \label{theo: lower hemi}
		Assume that \(F\) is a compact metrizable space and that the Conditions \ref{cond: LG}, \ref{cond: joint continuity in all}, \ref{cond: convexity} and \ref{cond: lipschitz} hold. 
		Then, the correspondence \((t, \omega) \mapsto \cR (t, \omega)\) is lower hemicontinuous.
	\end{theorem}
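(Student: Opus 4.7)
The plan is to verify the sequential characterization of lower hemicontinuity \cite[Theorem~17.21]{charalambos2013infinite}: for every \((t^n, \omega^n) \to (t, \omega)\) in \(\of 0, \infty\of\) and every \(P \in \cR(t, \omega)\), I will construct \(P^n \in \cR(t^n, \omega^n)\) with \(P^n \to P\) weakly. The first step is to rewrite \(P\) as the law of an SDE with a measurably chosen control. The pair \((\beta, \alpha) := (dB^{P}/d\llambda, dC^{P}/d\llambda)\) lies in \(\Theta(\cdot + t, \omega \widetilde{\otimes}_t X)\) almost everywhere. Using Condition \ref{cond: lipschitz} to factor \(a = \sigma\sigma^*\), Standing Assumption \ref{SA: meas gr} on the measurable graph, and a Filippov-type measurable implicit function theorem, I extract a predictable \(F\)-valued process \(\hat f\) on the canonical space such that
\[
\beta_s = b\big(\hat f(s, X), s+t, \omega \widetilde{\otimes}_t X\big), \quad \alpha_s = (\sigma\sigma^*)\big(\hat f(s, X), s+t, \omega \widetilde{\otimes}_t X\big),
\]
\((\llambda\otimes P)\)-a.e. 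After possibly enlarging the probability space, I realize \(P\) as the law of a process \(X\) solving \(dX_s = b(\hat f(s, X), s+t, \omega \widetilde{\otimes}_t X)\,ds + \sigma(\hat f(s, X), s+t, \omega \widetilde{\otimes}_t X)\,dW_s\) with \(X_0 = \omega(t)\) and a Brownian motion \(W\).

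On the same probability space, driven by the same \(W\) and by the same \(\hat f(\cdot, X)\) evaluated at the fixed \(X\) from Step~1, I solve the approximating SDE
\[
dX^n_s = b\big(\hat f(s, X), s+t^n, \omega^n \widetilde{\otimes}_{t^n} X^n\big)\,ds + \sigma\big(\hat f(s, X), s+t^n, \omega^n \widetilde{\otimes}_{t^n} X^n\big)\,dW_s, \qquad X^n_0 = \omega^n(t^n).
\]
Its coefficients are random (through \(\hat f(\cdot, X)\)) but, by Condition \ref{cond: lipschitz}, locally Lipschitz in the path argument uniformly in the other arguments, so a strong-existence and pathwise-uniqueness result for SDEs with random locally Lipschitz coefficients yields a unique strong solution \(X^n\). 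Convergence \(X^n \to X\) is then obtained via Gronwall: localising with \(\tau_M := \inf\{s : \|X_s\| \vee \|X^n_s\| > M\}\), I split the coefficient difference into a path-perturbation part (handled by the Lipschitz estimate of Condition \ref{cond: lipschitz}) and a \((t^n, \omega^n) \to (t, \omega)\) part (handled by joint continuity from Condition \ref{cond: joint continuity in all} together with dominated convergence based on the growth bound in Condition \ref{cond: LG}). A Burkholder--Davis--Gundy inequality combined with Gronwall's lemma gives \(\mathbb{E}\big[\sup_{s \leq T \wedge \tau_M}\|X^n_s - X_s\|^2\big] \to 0\); uniform moment bounds from Condition \ref{cond: LG} allow the localisation to be removed, yielding \(X^n \to X\) uniformly on compacts in probability, and hence \(P^n := \on{Law}(X^n) \to P\) weakly.

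It remains to verify \(P^n \in \cR(t^n, \omega^n)\). On the enlarged probability space, the differential characteristics of \(X^n\) read \((b, a)\big(\hat f(s, X), s+t^n, \omega^n \widetilde{\otimes}_{t^n} X^n\big)\), which lies in \(\Theta(s+t^n, \omega^n \widetilde{\otimes}_{t^n} X^n)\) by construction. To transfer to the canonical filtration \(\F^{X^n}\) underlying \(P^n\), I take the \(\F^{X^n}\)-predictable projection of the enlarged characteristics; Condition \ref{cond: convexity}, together with the \(\F^{X^n}\)-measurability of \(\Theta(s+t^n, \omega^n \widetilde{\otimes}_{t^n} X^n)\), ensures that the projected characteristics still lie in \(\Theta\), so \(P^n \in \cR(t^n, \omega^n)\). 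The main obstacle is the coupling of Steps~1 and~2: the parametrisation by \(F\) may be non-injective, so delivering a single predictable \(\hat f\) consistent with \((\beta, \alpha)\) requires a genuine implicit function theorem, and the strong-existence assertion must accommodate SDEs whose coefficients are random through a merely predictable (not Lipschitz) process \(\hat f(\cdot, X)\), which lies outside the classical Lipschitz-coefficient setup. Once these ingredients are in place, the Gronwall estimate is routine, modulo the careful localisation driven by Condition~\ref{cond: LG}.
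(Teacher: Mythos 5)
Your proposal is correct and follows essentially the same route as the paper's proof: the paper's Lemma \ref{lem: representation via control} supplies the predictable selector \(\f\) via the measurable implicit function theorem \cite[Theorem~18.17]{charalambos2013infinite}, the approximating SDEs driven by the same \(W\) and the same control \(\f(\cdot,X)\) are solved by appeal to strong existence for SDEs with random locally Lipschitz coefficients of linear growth (\cite[Theorem~14.30]{jacod79} or \cite[Theorem~4.5]{jacod1981weak}), ucp convergence is obtained by localisation plus Burkholder--Davis--Gundy and Gronwall exactly as you outline, and the membership \(P^n \in \cR(t^n,\omega^n)\) is the paper's Lemma \ref{lem: good candidate}, which combines Stricker's theorem, the formula for characteristics under filtration shrinkage, Condition \ref{cond: convexity}, and \cite[Corollary~8, p.~48]{diestel}. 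The two obstacles you flag at the end — existence of the predictable selector and strong solvability with random coefficients — are precisely the ones the paper's cited lemmas and references resolve, so nothing further is missing.
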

	
	\begin{example} \label{ex: interval}
		An interesting situation is the case where \(d = 1\) and \[\Theta (t, \omega) = [\underline{b}_t (\omega), \overline{b}_t (\omega)] \times [\underline{a}_t (\omega), \overline{a}_t (\omega)]\]
		with predictable functions
		\[
		\underline{b}, \overline{b} \colon \of 0, \infty\of \, \to \bR, \quad 
		\underline{a}, \overline{a} \colon \of 0, \infty\of \, \to \bR_+
		\]
		such that
		\[
		\underline{b}_t(\omega) \leq \overline{b}_t(\omega), \quad 
		\underline{a}_t(\omega) \leq \overline{a}_t(\omega), \quad 
		(t, \omega) \in \of 0, \infty \of.
		\]
		This situation is included in our setting. For instance, take \(F := [0, 1] \times [0, 1]\) and, for \(((f_1, f_2),t,\omega) \in F \times \of 0, \infty \of \),
		\begin{align*}
		b ((f_1, f_2), t, \omega) &:= \underline{b}_t (\omega) + f_1 \cdot (\overline{b}_t (\omega) - \underline{b}_t (\omega)), \\
		a ((f_1, f_2), t, \omega) &:= \underline{a}_t (\omega) + f_2 \cdot (\overline{a}_t (\omega) - \underline{a}_t (\omega)).
		\end{align*}
		In the following, we consider these choices of \(b\) and \(a\).
		Evidently, the convexity assumption given by Condition \ref{cond: convexity} is satisfied. Further, the linear growth Condition \ref{cond: LG} holds once the functions \(\underline{b}, \overline{b}, \underline{a}\) and \(\overline{a}\) satisfy itself linear growth conditions, i.e., in case for every \(T > 0\) there exists a constant \(C = C_T > 0\) such that 
		\begin{align*}
		|\underline{b}_t (\omega)|^2 + |\overline{b}_t (\omega)|^2 + | \underline{a}_t (\omega)| + | \overline{a}_t (\omega)| \leq C \Big( 1 + \sup_{s \in [0, t]} | \omega (s) |^2 \Big)
		\end{align*}
		for all \((t, \omega) \in \of 0, T\gs\). Similarly, the continuity Condition \ref{cond: joint continuity in all} is implied by (joint) continuity of \(\underline{b}, \overline{b}, \underline{a}\) and~\(\overline{a}\).
		Under these conditions, i.e., continuity and linear growth, Theorem \ref{theo: upper hemi} implies that the correspondence \( (t, \omega) \mapsto \cR(t,\omega)\) is upper hemicontinuous with compact values, while Theorem \ref{theo: viscosity} implies that the value function \(v\) is upper semicontinuous.
		Moreover, if \(b\) and \(a\) satisfy the local Lipschitz condition given by Condition \ref{cond: lipschitz}, 
		then Theorem \ref{theo: lower hemi} shows that the correspondence \( (t, \omega) \mapsto \cR(t,\omega)\) is lower hemicontinuous. In particular, the value function \(v\) is then continuous. For instance, the local Lipschitz conditions hold in case for every \(T, M > 0\) there exists a constant \(C = C_{T, M} > 0\) such that \(\underline{a}_t (\omega) \geq 1/C\) and 
		\begin{align*}
		|\underline{b}_t (\omega) - \underline{b}_t (\alpha)| + |\overline{b}_t (\omega) - \overline{b}_t (\alpha)| &\leq C  \sup_{s \in [0, t]} | \omega (s) - \alpha (s) |, \\
  | \underline{a}_t (\omega) - \underline{a}_t (\alpha)| + | \overline{a}_t (\omega) - \overline{a}_t (\alpha)| &\leq C  \sup_{s \in [0, t]} | \omega (s) - \alpha (s) |,
		\end{align*}
		for all \(\omega, \alpha \in \Omega \colon \sup_{s \in [0, t]} |\omega (s)| \vee |\alpha (s)| \leq M\) and \(t \in [0, T]\).
	\end{example}
	
	\begin{remark} 
Let us also comment on a result from the paper \cite{nutz} that deals with a regularity property of a value function \(V \colon \of 0, T \gs \hspace{0.05cm}\to \mathbb{R}\) that is closely related to ours. Let \(\mathbf{D} \colon \of 0, T \gs \hspace{0.05cm} \twoheadrightarrow \mathbb{S}^d_+\) be a progressively measurable closed-valued correspondence and define, for \(\delta > 0\) and \(D \subseteq \mathbb{S}^d_+\), \(\on{Int}^\delta D := \{ x \in D \colon B_\delta (x) \subseteq D\}\), where \(B_\delta (x)\) denotes the ball around \(x\) with radius \(\delta\). Further, for \((t, \omega) \in \of 0, T \gs\), let \(\mathcal{M} (t, \omega)\) be the set of all \(P \in \fPas (t)\) such that \(P (X^t = \omega^t) = 1\), \((\llambda \otimes P)\)-a.e. \(b^P_{\cdot + t} = 0\), i.e., that are local martingale measures after time \(t\), and for that there exists a \(\delta = \delta (t, \omega, P)> 0\) such that \((\llambda \otimes P)\)-a.e. \(a^P_{\cdot + t} \in \mathbb{S}^d_{++}\) and 
\(
a^P_{\cdot + t} \in \on{Int}^\delta \mathbf{D} (\cdot + t, X).
\)
The value function \(V\) from \cite{nutz} is given by
\[
V (t, \omega) = \sup_{P \in \mathcal{M} (t, \omega)} E^P \big[ \xi \big],
\]
where \(\xi \colon \of 0, T \gs \hspace{0.05cm} \to \mathbb{R}\) is a bounded uniformly continuous input function.
Our setting covers the (in a certain sense) limiting case where \(\delta (t, \omega, P) = 0\), cf. \cite[Remark 3.7]{nutz}. 
Corollary 3.6 from \cite{nutz} shows that, for fixed \(t \in [0, T]\), the map \(\omega \mapsto V (t, \omega)\) is lower semicontinuous in case \(\mathbf{D}\) satisfies the uniform continuity property as defined in \cite[Definition 3.2]{nutz}. To compare this regularity result to our main theorems, we notice that the special case \(\mathbf{D} (t, \omega) = [\underline{a}_t (\omega), \overline{a}_t (\omega)]\) satisfies the hypothesis from \cite[Corollary 3.6]{nutz} when the functions \(\underline{a}\) and \(\overline{a}\) are uniformly continuous in the sense that for all \(\varepsilon > 0\) there exists a \(\delta > 0\) such that
\[
\omega, \alpha \in \Omega, \ \sup_{s \in [0, T]} | \omega (s) - \alpha (s) | < \delta \ \ \Longrightarrow \ \ \sup_{s \in [0, T]} | a_s (\omega) - a_s (\alpha) | < \varepsilon, \quad a = \underline{a}, \overline{a}. 
\]
In our setting, mere continuity of \(\underline{a}\) and \(\overline{a}\) imply upper semicontinuity of our value function, see Example~\ref{ex: interval}, while we require some local Lipschitz continuity for lower semicontinuity. 
	\end{remark}

	In the following corollary we summarize our main observations concerning the correspondence \(\cR\) and the value function \(v\).
	\begin{corollary} [Continuity of \(\cR\) and the value function] \label{coro: main value fuction summary}
		Assume that \(F\) is a compact metrizable space and that the Conditions \ref{cond: LG}, \ref{cond: joint continuity in all}, \ref{cond: convexity} and \ref{cond: lipschitz} hold. Then, the correspondence \((t, \omega) \mapsto \cR(t, \omega)\) is continuous and the value function \(\of 0, T\gs \ni (t, \omega) \mapsto v (t, \omega)\) is a viscosity solution to \eqref{eq: PIDE}.
	\end{corollary}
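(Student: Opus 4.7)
The proof is a direct synthesis of the three preceding theorems, so the plan is simply to assemble them in the right order and observe that the hypotheses line up.

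First, I would verify continuity of the correspondence $(t,\omega) \mapsto \cR(t,\omega)$ by splitting continuity into its two hemicontinuity components. The assumption that $F$ is compact metrizable together with Conditions \ref{cond: LG}, \ref{cond: joint continuity in all} and \ref{cond: convexity} are precisely the hypotheses of Theorem \ref{theo: upper hemi}, which delivers upper hemicontinuity of $\cR$ with compact values. Adding Condition \ref{cond: lipschitz} brings us into the setting of Theorem \ref{theo: lower hemi}, which gives lower hemicontinuity of $\cR$. By the definition of continuity for correspondences (cf. \cite[Definition~17.2]{charalambos2013infinite}), $\cR$ is thus continuous.

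Second, I would invoke Theorem \ref{theo: viscosity} to identify $v$ as a viscosity solution. Under the compactness of $F$ and Conditions \ref{cond: LG}, \ref{cond: joint continuity in all} and \ref{cond: convexity}, that theorem asserts that $v$ is a weak sense viscosity solution to \eqref{eq: PIDE} and, moreover, that continuity of $(t,\omega) \mapsto \cR(t,\omega)$ implies continuity of $v$. Since we just established continuity of $\cR$, we conclude that $v$ is a continuous weak sense viscosity solution. By the definition given in Section \ref{sec: nonlinear KE}, a continuous weak sense viscosity solution is precisely what is called a viscosity solution, which completes the proof.

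There is no genuine obstacle here beyond checking that the hypotheses of Theorems \ref{theo: upper hemi}, \ref{theo: lower hemi} and \ref{theo: viscosity} are all subsumed by those of the corollary, which is immediate. The real work is contained in the three cited theorems themselves, whose proofs are postponed to the later sections of the paper.
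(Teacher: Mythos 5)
Your proposal is correct and matches the paper's (implicit) argument: the corollary is stated without a written proof precisely because it is a direct assembly of Theorems \ref{theo: upper hemi}, \ref{theo: lower hemi} and \ref{theo: viscosity}, and you have assembled them in exactly the intended way, correctly noting that a continuous weak sense viscosity solution is by definition a viscosity solution.
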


	It is a natural question whether the value function is the unique viscosity solution to the nonlinear PPDE \eqref{eq: PIDE}. Uniqueness results for certain Hamilton--Jacobi--Bellman PPDEs were recently proved in \cite{cosso2,zhou}. In the following we apply a theorem from \cite{zhou} and show that, under global Lipschitz conditions, our value function is the unique viscosity solution among all solutions that satisfy a certain Lipschitz property that we explain now. 
	We define \(\dd = \dd_T \colon \of 0, T\gs \times \of 0, T\gs \to \bR_+\) by
	\begin{equation} \label{eq: def d}
		\begin{split}
			\dd ( (t,\omega), (s, \alpha)) :=  \Big( 1 + \sup_{r \in [0, t]} \|\omega (r)\| + \ & \sup_{r \in [0, s]}  \|\alpha (r)\|\Big) | t - s |^{1/2} \\&+ \sup_{r \in [0, T]} \|\omega(r \wedge t) - \alpha( r \wedge s) \|.
		\end{split}
	\end{equation}
	A function \(f \colon \of 0, T\gs \to \mathbb{R}\) is said to be {\em \(\dd\)-Lipschitz continuous}, if there exists a constant \(L = L_T > 0\) such that 
	\[
	| f (t, \omega) - f (s, \alpha) | \leq L \dd ((t, \omega), (s, \alpha))
	\] 
	for all \((t, \omega), (s, \alpha) \in \of 0, T\gs\). We remark that \(\dd\)-Lipschitz continuity entails continuous on~\(\of 0, T\gs\).
	
	\begin{condition}[Global Lipschitz continuity] \label{cond: global lip and bdd}
		There exists a dimension \(\dr \in \mathbb{N}\) and a \(\mathcal{B}(F) \otimes \mathscr{P}\)-measurable function \(\sigma \colon F \times \of 0, \infty\of \hspace{0.05cm} \to \bR^{\d \times \dr}\) such that \(a = \sigma \sigma^*\).
		Moreover, there exists a constant \(C = C_{T} > 0\) such that 
		\[
		\|b (f, t, \omega) - b (f, t, \alpha)\| + \|\sigma (f, t, \omega) - \sigma (f, t, \alpha) \|_o \leq C \sup_{s \in [0, t]} \| \omega (s) - \alpha (s) \|
		\]
		for all \(\omega, \alpha \in \Omega\) and \((f, t) \in F \times [0, T]\).
	\end{condition}
	
	\begin{theorem} \label{theo: holder estimate value function}
		Assume that \(F\) is a compact metrizable space and that the Conditions~\ref{cond: LG}, \ref{cond: continuity control}, \ref{cond: convexity} and \ref{cond: global lip and bdd} hold. Furthermore, suppose that the terminal function \(\psi\) is bounded and Lipschitz continuous, i.e., there exists a constant \(L > 0\) such that
		\[
		|\psi (\omega) - \psi (\alpha) | \leq L \sup_{s \in [0, T]} \|\omega (s) - \alpha (s) \|
		\]
		for all \(\omega, \alpha \in \Omega\). Then, the value function is \(\dd\)-Lipschitz continuous.
	\end{theorem}
	Estimates similar to Theorem \ref{theo: holder estimate value function} are standard in stochastic optimal control, see, e.g., \cite[Theorem 2.11]{zhou} where a version of Theorem \ref{theo: holder estimate value function} for a path-dependent control framework is given.
	
	When it comes to mere continuity, Theorem \ref{theo: holder estimate value function} requires stronger regularity assumption in the path variable than Corollary \ref{coro: main value fuction summary}. In particular, also the input function~\(\psi\) has to be Lipschitz continuous in Theorem \ref{theo: holder estimate value function}.
	Contrary to Corollary \ref{coro: main value fuction summary}, Theorem~\ref{theo: holder estimate value function} shows that the value function is Lipschitz continuous in space for fixed, but arbitrary, times with a uniform (in time) Lipschitz constant. 
	We think this observation is of independent interest.

	We are in the position to present a uniqueness result for the value function.
	\begin{theorem} \label{thm: uniqueness}
		Assume that \(F\) is a compact metrizable space and that the Conditions~\ref{cond: LG}, \ref{cond: joint continuity in all}, \ref{cond: convexity} and \ref{cond: global lip and bdd} hold. Furthermore, suppose that the terminal function \(\psi\) is bounded and Lipschitz continuous, i.e., there exists a constant \(L > 0\) such that
		\[
		|\psi (\omega) - \psi (\alpha) | \leq L \sup_{s \in [0, T]} \|\omega (s) - \alpha (s) \|
		\]
		for all \(\omega, \alpha \in \Omega\). Then, the value function \(v\) is the unique viscosity solution that is bounded and \(\dd\)-Lipschitz continuous.
	\end{theorem}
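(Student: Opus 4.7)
The plan is to reduce the theorem to the recent uniqueness result for path-dependent Hamilton--Jacobi--Bellman equations from \cite{cosso2}. The proof will proceed in three steps: first, establish that $v$ is a bounded viscosity solution to \eqref{eq: PIDE}; second, verify uniform continuity of $v$ with respect to the pseudometric $d$; and third, invoke the comparison principle of \cite{cosso2} to conclude uniqueness.

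For the first step, I would observe that Condition \ref{cond: global lip and bdd} subsumes both Condition \ref{cond: LG} (linear growth) and Condition \ref{cond: lipschitz} (local Lipschitz continuity), since uniform boundedness implies linear growth and global Lipschitz continuity implies the local version. Therefore all hypotheses of Corollary \ref{coro: main value fuction summary} are in force, and $v$ is a viscosity solution to \eqref{eq: PIDE}. Boundedness of $v$ is immediate: $|v(t,\omega)| \leq \|\psi\|_\infty < \infty$.

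For the second step, I would appeal directly to Theorem \ref{theo: holder estimate value function}, whose hypotheses coincide exactly with those of the present theorem. This yields the H\"older-type modulus
\[
|v(t,\omega) - v(s,\alpha)| \leq C\big[\, |t-s|^{1/2} + \sup_{r \in [0,T]} \|\omega(r \wedge t) - \alpha(r \wedge s)\|\, \big],
\]
which immediately implies uniform continuity of $v$ with respect to $d$.

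The main obstacle is the third step, where Condition \ref{cond: det vol} plays the decisive role. This condition has been crafted specifically to match the structural assumptions used in \cite{cosso2}: the volatility must factor through finitely many smooth path functionals of the integrated form $\omega(t)\phi_i(t) - \int_0^t \omega(s)\phi_i'(s)\,ds$, and the reduced coefficient $\overline{\sigma}$ must be globally Lipschitz and bounded with polynomially growing time and space derivatives as demanded in items \textup{(iii.a)} and \textup{(iii.b)}. The uniform-in-$(f,\omega)$ continuity in $t$ from \textup{(ii)} supplies the modulus needed for the doubling-of-variables argument underlying the comparison result. To finish, I would translate the PPDE \eqref{eq: PIDE} together with the test-function class $C^{1,2}_{pol}$ into the setting of \cite{cosso2}, verify item by item that their comparison theorem applies, and then apply it twice (once with $v$ as subsolution against an arbitrary supersolution $u$, and once with the roles reversed) to conclude that any bounded viscosity solution uniformly continuous with respect to $d$ must coincide with $v$. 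The delicate point is the bookkeeping required to match our semimartingale-characteristic formulation of $G$ with the controlled-SDE Hamiltonian used in \cite{cosso2}; once this identification is in place, uniqueness follows directly.
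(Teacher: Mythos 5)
Your proposal matches the paper's proof: the paper also establishes the viscosity property and the uniform continuity, then cites \cite[Theorem 4.5]{cosso2} for uniqueness. The only cosmetic difference is that you route the viscosity-solution part through Corollary \ref{coro: main value fuction summary} (requiring Condition \ref{cond: lipschitz}, which indeed follows from Condition \ref{cond: global lip and bdd}), whereas the paper combines Theorem \ref{theo: viscosity} with the continuity supplied by Theorem \ref{theo: holder estimate value function}; both yield the same conclusion and use the same key inputs, so the approaches are essentially identical.
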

	\begin{proof}
		Thanks to the Theorems \ref{theo: viscosity} and \ref{theo: holder estimate value function}, the value function \(v\) is a viscosity solution to \eqref{eq: PIDE} that is \(\dd\)-Lipschitz continuous. Hence, the uniqueness statement follows from \cite[Theorem~6.2]{zhou}.
	\end{proof}

 	\begin{remark}
 		Under more assumptions on the coefficients, a uniqueness result in a larger class of continuous function has been proved in the recent paper \cite{cosso2}.
 	\end{remark}
 \begin{example}
\quad 
     \begin{enumerate}
         \item[\textup{(i)}]
         In the Markovian case where \(b(f, t, \omega)\) and \(a (f, t, \omega)\) depend on \((t, \omega)\) only through the value \(\omega (t)\), assumptions in the spirit of those from Theorem \ref{thm: uniqueness} imply that the (point-dependent) value function is unique among all bounded viscosity solutions, see \cite{CN22b} for a precise result in this direction. 
     \item[(ii)] Let us again comment on the situation from Example \ref{ex: interval}, i.e., the case where \(d = 1\) and \[\Theta (t, \omega) = [\underline{b}_t (\omega), \overline{b}_t (\omega)] \times [\underline{a}_t (\omega), \overline{a}_t (\omega)]\]
		with predictable functions
		\[
		\underline{b}, \overline{b} \colon \of 0, \infty\of \, \to \bR, \quad 
		\underline{a}, \overline{a} \colon \of 0, \infty\of \, \to \bR_+
		\]
		such that
		\[
		\underline{b}_t(\omega) \leq \overline{b}_t(\omega), \quad 
		\underline{a}_t(\omega) \leq \overline{a}_t(\omega), \quad 
		(t, \omega) \in \of 0, \infty \of.
		\]
  As explained in Example \ref{ex: interval}, this situation can be included in our framework with \(F := [0, 1] \times [0, 1]\) and 
  \begin{align*}
  b ( (f_1, f_2), t, \omega) &:= \underline{b}_t (\omega) + f_1 \cdot (\overline{b}_t (\omega) - \underline{b}_t (\omega)), \\
  a ((f_1, f_2), t, \omega) &:= \underline{a}_t (\omega) + f_2 \cdot (\overline{a}_t (\omega) - \underline{a}_t (\omega)),
  \end{align*}
  for \( (f_1, f_2) \in F\) and \((t, \omega) \in \of 0, \infty\of\). In this setting, Condition \ref{cond: lipschitz} holds in case there exists a constant \(C > 0\) such that 
  \[
  | \underline{b} (t, \omega) |, | \overline{b} (t, \omega) |  \leq C, \qquad \frac{1}{C} \leq \underline{a} (t, \omega), \overline{a} (t, \omega) \leq C,
  \]
and
  \begin{align*}
  	| \underline{b} (t, \omega) - \underline{b} (t, \alpha) | + |\overline{b} (t, \omega) - \overline{b} (t, \alpha) | &\leq C \sup_{s \in [0, t]} | \omega (s) - \alpha (s) |, \\
  	| \underline{a} (t, \omega) - \underline{a} (t, \alpha) | + |\overline{a} (t, \omega) - \overline{a} (t, \alpha) | &\leq C \sup_{s \in [0, t]} | \omega (s) - \alpha (s) |,
  \end{align*}
  for all \(t \in [0, T]\) and \(\omega, \alpha \in \Omega\).
%
     \end{enumerate}
 \end{example}


\section{The martingale problem} \label{sec: MP}

As a final main result, we show that \(\mathcal{E}\) solves a type of nonlinear martingale problem. This result supports our interpretation of \(\mathcal{E}\) as a nonlinear continuous semimartingale. We restrict our attention to the one-dimensional situation, i.e., we presume that \(\d = 1\).
Let \(\mathbb{M}_{icx}\) be the set of all \(\phi \in C^2(\mathbb{R}; \mathbb{R})\) such that \(\phi', \phi'' \geq 0\). Furthermore, for \(n > 0\), we set 
\[
\rho_n := \inf \{t \geq 0 \colon |X_t| \geq n\}.
\]
\begin{definition}
	We say that \(\mathcal{E}\) solves the \emph{martingale problem associated to \(G\)} if for all \(n \in \mathbb{N}\) and \(\phi \in \mathbb{M}_{icx}\) the process
	\[
	\phi (X_{t \wedge \rho_n}) - \int_0^{t \wedge \rho_n} G (r, X, \phi) dr, \quad t \in \mathbb{R}_+,
	\]
	is an \(\mathcal{E}\)-martingale, i.e., 
	\[
	\mathcal{E}_s \Big( \phi (X_{t \wedge \rho_n}) - \phi (X_{s \wedge \rho_n}) - \int_{s \wedge \rho_n}^{t \wedge \rho_n} G (r, X, \phi) dr \Big) (\omega) = 0, \quad s \leq t, \ \omega \in \Omega.
	\]
\end{definition}
Of course, the acronym \(icx\) stands for increasing and convex. Although this set of test functions looks non-standard at first sight, it is perfectly fine in the linear setting, because it is well-known to be measure-determining for the set of Borel probability measures on \(\mathbb{R}\) with finite first moment (cf. \cite[Theorems~5.2, 5.3]{muller97} and \cite[Theorem~2.1]{DenMul02}).

\begin{theorem} \label{theo: MP}
	Suppose that \(F = F_0 \times F_1\) for two compact metrizable spaces  \(F_0\) and \(F_1\), that \(b (f, t, \omega) = b(f_0, t, \omega)\) and \(a (f, t, \omega) = a(f_1, t, \omega)\) for all \((f, t, \omega) = (f_0, f_1, t, \omega) \in F \times \of 0, \infty\of\), and that the Conditions~\ref{cond: LG} and \ref{cond: joint continuity in all} hold. Then, \(\mathcal{E}\) solves the martingale problem associated to \(G\).
\end{theorem}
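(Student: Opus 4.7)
The proof splits into verifying $\mathcal{E}_s(Y)(\omega) \leq 0$ and $\mathcal{E}_s(Y)(\omega) \geq 0$, where I abbreviate $Y := \phi(X_{t \wedge \rho_n}) - \phi(X_{s \wedge \rho_n}) - \int_{s \wedge \rho_n}^{t \wedge \rho_n} G(r, X, \phi) dr$. The first inequality is a direct consequence of It\^o's formula: for any $P \in \cA(s, \omega)$, one selects predictable $F_0$- and $F_1$-valued processes $(f^P, g^P)$ with $(dB^P_r/d\llambda, dC^P_r/d\llambda) = (b(f^P_r, r, X), a(g^P_r, r, X))$ on $[s, \infty)$. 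Stopping at $\rho_n$ bounds $|X|$ by $n$, and Condition~\ref{cond: LG} then uniformly bounds drift and diffusion; together with boundedness of $\phi', \phi''$ on $[-n, n]$, this promotes the local martingale from It\^o to a true $P$-martingale. Taking expectations and using the pointwise bound $\phi'(X_r) b(f^P_r, r, X) + \tfrac12 \phi''(X_r) a(g^P_r, r, X) \leq G(r, X, \phi)$ yields $E^P[Y] \leq 0$, hence $\mathcal{E}_s(Y)(\omega) \leq 0$.

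For the lower bound I construct, for each $\epsilon > 0$, a measure $P_\epsilon \in \cA(s, \omega)$ with $E^{P_\epsilon}[Y] \geq -\epsilon$. The product structure $F = F_0 \times F_1$ and the independence of the parametrizations of $b$ and $a$, combined with $\phi', \phi'' \geq 0$ from $\phi \in \mathbb{M}_{icx}$, decouple the supremum defining $G$:
\[
G(r, \omega, \phi) = \phi'(\omega(r)) \sup_{f_0 \in F_0} b(f_0, r, \omega) + \tfrac12 \phi''(\omega(r)) \sup_{f_1 \in F_1} a(f_1, r, \omega).
\]
The measurable maximum theorem~\cite[Theorem~18.19]{charalambos2013infinite}, applied to $b$ and $a$ viewed as Carath\'eodory functions on the compact spaces $F_0, F_1$, yields predictable selectors $f_0^*, f_1^*$ attaining the respective suprema. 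The composed coefficients $b(f_0^*, r, \omega), a(f_1^*, r, \omega)$ are Borel in $\omega$ but generally discontinuous, which rules out a direct use of Skorokhod's weak existence theorem.

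The remedy is time discretization. Fix a partition $\pi \colon s = t_0 < t_1 < \cdots < t_N = t$ of mesh $\|\pi\|$ and freeze the controls on each $[t_i, t_{i+1})$ based on the path up to $t_i$. The coefficients
\[
b_\pi(r, \omega) := b(f_0^*(t_i, \omega^{t_i}), r, \omega), \quad a_\pi(r, \omega) := a(f_1^*(t_i, \omega^{t_i}), r, \omega), \qquad r \in [t_i, t_{i+1}),
\]
are jointly continuous in $(r, \omega_{\text{future}})$ once $\omega^{t_i}$ is frozen, by Condition~\ref{cond: joint continuity in all}. Iteratively applying Skorokhod's existence theorem on each subinterval (with linear growth from Condition~\ref{cond: LG}) and concatenating with the deterministic path $\omega^s$ on $[0, s]$ yields a probability $P_\pi \in \cA(s, \omega)$. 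Repeating the It\^o computation from the upper bound under $P_\pi$ gives
\[
E^{P_\pi}[Y] = E^{P_\pi}\Big[\int_{s\wedge\rho_n}^{t\wedge\rho_n} \big(\phi'(X_r) b_\pi(r, X) + \tfrac12 \phi''(X_r) a_\pi(r, X) - G(r, X, \phi)\big) dr\Big],
\]
and it remains to show this vanishes as $\|\pi\| \to 0$.

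The hard part is this last convergence, precisely because the measurable selectors $f_0^*, f_1^*$ need not be continuous in $\omega$. The key observation is that $f_0^*(t_i, \omega^{t_i})$ attains $\sup_{f_0} b(f_0, t_i, \omega^{t_i})$ \emph{at time $t_i$}; hence joint continuity of $b$ in $(r, \omega)$, uniform in $f_0$ by compactness of $F_0$, transports this near-optimality to all $r \in [t_i, t_{i+1})$ whenever $|r - t_i|$ and $\sup_{u \leq r} \|X_u - X_{u \wedge t_i}\|$ are small. Combined with the splitting of $G$ and the symmetric argument for $a$, the bracketed integrand is shown to be $\geq -\delta_\pi(X)$, with $\delta_\pi(X) \to 0$ pathwise for continuous $X$ and uniformly dominated on $\of 0, \rho_n \gs$ by linear growth. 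Tightness of $\{P_\pi\}$, inherited from the uniform linear-growth bound on $b, a$, together with dominated convergence then deliver $E^{P_\pi}[Y] \geq -o(1)$. This modulus-of-continuity argument is the technical heart of the theorem and relies on every hypothesis: the product form $F = F_0 \times F_1$, joint continuity, linear growth, the sign conditions $\phi', \phi'' \geq 0$, and localization by $\rho_n$.
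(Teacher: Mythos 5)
Your upper bound argument is essentially identical to the paper's: take an arbitrary $P \in \cA(s,\omega)$, apply It\^o, localize at $\rho_n$ so that the linear growth condition promotes the local martingale to a true martingale, and compare the drift against $G$ pointwise.

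For the lower bound you take a genuinely harder route than the paper, and the extra work stems from an incorrect belief. You argue that the composed coefficients $b(f_0^*(r,\omega), r, \omega)$ and $a(f_1^*(r,\omega), r, \omega)$ are ``Borel in $\omega$ but generally discontinuous, which rules out a direct use of Skorokhod's weak existence theorem,'' and this drives you into the time-discretization and modulus-of-continuity machinery. But the composition with the selector is not discontinuous: by construction $f_0^*(r,\omega)$ attains the argmax, so
\[
b\big(f_0^*(r,\omega), r, \omega\big) = \sup_{f_0 \in F_0} b(f_0, r, \omega),
\]
and the right-hand side is jointly continuous in $(r,\omega)$ by Berge's maximum theorem, since $F_0$ is compact and $b$ is jointly continuous (Condition~\ref{cond: joint continuity in all}); the same holds for $a$. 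In other words, the \emph{selector} may be discontinuous as an $F_0$-valued map, but you never need it --- you only need the \emph{value} it selects, which is the continuous upper envelope. This is exactly what the paper exploits in Lemma~\ref{lem: real worst case}: it applies Skorokhod's existence theorem once, directly to the continuous sup-coefficients $\sup_{f_0} b(f_0, \cdot+s, \omega\otimes_s X)$ and $\sup_{f_1} a(f_1, \cdot+s, \omega\otimes_s X)$, which satisfy linear growth by Condition~\ref{cond: LG}, to produce a single worst-case measure $\overline P \in \cA(s,\omega)$. The decoupling of the supremum via $\phi', \phi'' \ge 0$ and the product structure $F = F_0\times F_1$, which you correctly identify, then immediately gives $E^{\overline P}[Y]=0$ with no approximation needed. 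Your discretization scheme is not wrong in spirit, but the convergence argument at the end is left at the level of a sketch (tightness of $\{P_\pi\}$, the dominated-convergence step, and the uniform ``transport of near-optimality'' from $t_i$ to $r\in[t_i,t_{i+1})$ are asserted rather than proved), and in any case the whole apparatus is unnecessary once the continuity of the sup-coefficients is noticed.
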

In \cite{guo2018martingale} a related result for generalized \(G\)-Brownian motion was proved. 
Let us discuss the relation of Theorem \ref{theo: MP} and the approach from \cite{guo2018martingale} in more detail. In \cite{guo2018martingale} it is shown that a generalized \(G\)-Brownian motion (see \cite{peng2010}) solves a nonlinear martingale problem that is defined not only via test functions but also via a class of nonlinear test generators. This approach uses the power of \(G\)-It\^o calculus in a crucial manner. In our framework we cannot rely on such a stochastic calculus. Instead, we only identify a suitable class of test functions, namely \(\mathbb{M}_{icx}\), for that we can prove a nonlinear martingale property. In that sense our treatment of the martingale problem is certainly less complete than those for the generalized \(G\)-Brownian motion from \cite{guo2018martingale}. We think that Theorem~\ref{theo: MP} is a good indicator for our interpretation of \(\mathcal{E}\) as a nonlinear continuous semimartingale.


\section{Proof of the Dynamic Programming Principle: Theorem \ref{theo: DPP}} \label{sec: pf DPP}
The proof is based on an application of \cite[Theorem 2.1]{ElKa15}, which provides three abstract conditions on the correspondence \(\cA\) which imply the DPP. The work lies in the verification of these conditions. For reader's convenience, let us restate them. 
For a probability measure \(P\) on \((\Omega, \mathcal{F})\), a kernel \(\Omega \ni \omega \mapsto Q_\omega \in \mathfrak{P}(\Omega)\), and a finite stopping time \(\tau\), we define the pasting measure
\[
(P \otimes_\tau Q) (A) := \iint \1_A (\omega \otimes_{\tau(\omega)} \omega') Q_\omega (d \omega') P(d \omega), \quad A \in \mathcal{F}.
\]
We are in the position to formulate the assumptions from \cite[Theorem 2.1]{ElKa15}.
\begin{enumerate}
	\item[\textup{(i)}]
	\emph{Measurable graph condition:} The set 
	\[
	\big\{ (t, \omega, P) \in \of 0, \infty\of \hspace{0.05cm} \times\hspace{0.05cm} \mathfrak{P}(\Omega) \colon P \in \cA(t, \omega) \big\}
	\]
	is analytic.
	\item[\textup{(ii)}]
	\emph{Stability under conditioning:} For any \(t \in \mathbb{R}_+\), any stopping time \(\tau\) with \(t \leq \tau < \infty\), and any \(P \in \cA(t, \alpha)\) there exists a family \(\{P (\cdot | \mathcal{F}_\tau) (\omega) \colon\) \(\omega \in \Omega \}\) of regular \(P\)-conditional probabilities given \(\mathcal{F}_\tau\) such that \(P\)-a.s. \(P (\cdot | \mathcal{F}_\tau) \in \cA(\tau, X)\).
	\item[\textup{(iii)}]
	\emph{Stability under pasting:} For any \(\alpha \in \Omega\), \(t \in \mathbb{R}_+\), any stopping time \(\tau\) with \(t \leq \tau < \infty\), any \(P \in \cA(t, \alpha)\) and any \(\mathcal{F}_\tau\)-measurable map \(\Omega \ni \omega \mapsto Q_\omega \in \mathfrak{P}(\Omega)\) the following implication holds:
	\[
	P\text{-a.s. } Q \in \cA (\tau, X)\quad \Longrightarrow \quad P \otimes_\tau Q \in \cA(t, \alpha).
	\]
\end{enumerate}

\begin{remark}
	To verify (ii) above, it is necessary to introduce the sets $\fPas(t)$ for $t > 0$.
	Notice that for a finite stopping time \(\tau\) it is not necessarily true that \(P \in \fPas \) implies  \( \{P(\cdot | \mathcal{F}_\tau) (\omega) \colon \omega \in \Omega\} \subset \fPas.\)
	Indeed, we only have the semimartingale property of $X$ after $\tau(\omega)$ under $P(\cdot | \mathcal{F}_\tau)(\omega)$.
	This forces us to study the correspondence $t \mapsto \fPas(t)$. To see the issue, let \(P\) be the Wiener measure, i.e., the law of a one-dimensional Brownian motion. Then, for any \(t > 0\) and \(P\)-a.a. \(\omega \in \Omega\), \(P(\cdot | \mathcal{F}_t) (\omega) \not \in \fPas\). This follows simply from the fact that \(P\)-a.a. \(\omega \in \Omega\) are locally of infinite variation, which implies that the stopped process \(X_{\cdot \wedge t}\), which coincides \(P (\cdot| \cF_t)(\omega)\)-a.s. with \(\omega (\cdot \wedge t)\), is no \(P(\cdot | \mathcal{F}_t) (\omega)\)-semimartingale for \(P\)-a.a. \(\omega \in \Omega\) (see \cite[Proposition~I.4.28]{JS}). 
\end{remark}
In the following three sections we check these properties. In the fourth (and last) section, we finalize the proof of Theorem \ref{theo: DPP}.

	Before we start our program let us shortly comment on our strategy of proof and relate it to existing literature.
	Dynamic programming principles for nonlinear expectations related to nonlinear L\'evy processes, and more general Markovian semimartingales, have been proved in \cite{hol16, neufeld2017nonlinear}. In contrast to our approach, which has also been used in \cite{ElKa15} for controlled martingale problems, the proofs in \cite{hol16, neufeld2017nonlinear} are based on an abstract result from \cite{NVH}. In general, the methodologies are different in the sense that the uncertainty sets of measures in \cite{hol16, neufeld2017nonlinear} consist of laws of semimartingales that start at zero, while time and path dependence persists in both the set \(\Theta\) and the test function~\(\psi\). In our setting, however, the dependence on \((t, \omega)\) only enters via the set \(\mathcal{C}(t, \omega)\). This construction seems to us closer to those of a classical linear conditional expectation. 
	
	Compared to \cite{hol16, neufeld2017nonlinear}, the main difference in our setting is the time-dependence \(t \mapsto \fPas (t)\) for which we have to prove new measurability and stability properties. While this differs from \cite{hol16, neufeld2017nonlinear} on a technical level, we closely follow, however, their general strategy of proof.

\subsection{Measurable graph condition}
The proof of the measurable graph condition is split into several parts. 
For \(t \in \mathbb{R}_+\), we define the usual shift \(\th_t \colon \Omega \to \Omega\) by \(\th_t (\omega) = \omega(\cdot + t)\) for all \(\omega \in \Omega\).
The next two lemmata give some rather elementary observations.
\begin{lemma} \label{lem: map prod meas}
	The map \((t, P) \mapsto P \circ \th_t^{-1} =: P_t\) is continuous. 
\end{lemma}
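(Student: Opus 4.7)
The plan is to combine Skorokhod's representation theorem with the joint continuity of the shift map, so that the statement reduces to continuous-mapping arguments at the level of the coupled path-valued random variables rather than at the level of measures. Since $\Omega = C(\mathbb{R}_+;\mathbb{R}^\d)$ with the local uniform topology is Polish and $\mathfrak{P}(\Omega)$ carries the weak topology, Skorokhod representation is available and will let me convert a converging sequence in $\mathbb{R}_+\times\mathfrak{P}(\Omega)$ into an almost surely converging sequence of canonical paths.

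First I would verify the auxiliary fact that the map $\Theta\colon\mathbb{R}_+\times\Omega\to\Omega$, $\Theta(t,\omega):=\theta_t(\omega)=\omega(\cdot+t)$, is jointly continuous. Given $(t_n,\omega_n)\to(t,\omega)$ with $\omega_n\to\omega$ locally uniformly, a triangle inequality
\[
\sup_{s\in[0,K]}\|\omega_n(s+t_n)-\omega(s+t)\|\ \leq\ \sup_{r\in[0,K+\sup_n t_n]}\|\omega_n(r)-\omega(r)\|\ +\ \sup_{s\in[0,K]}\|\omega(s+t_n)-\omega(s+t)\|
\]
for each compact $[0,K]\subset\mathbb{R}_+$ suffices: the first summand vanishes because $\omega_n\to\omega$ uniformly on compacts and $(t_n)$ is bounded, while the second vanishes by uniform continuity of $\omega$ on compacts.

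Given this, suppose $(t_n,P_n)\to(t,P)$ in $\mathbb{R}_+\times\mathfrak{P}(\Omega)$. By Skorokhod's representation theorem there exist random variables $Y_n$, $Y$ on some probability space with $Y_n\sim P_n$, $Y\sim P$, and $Y_n\to Y$ almost surely in $\Omega$. Joint continuity of $\Theta$ then gives $\theta_{t_n}(Y_n)\to\theta_t(Y)$ almost surely, hence, for every bounded continuous $f\colon\Omega\to\mathbb{R}$, the dominated convergence theorem yields
\[
\int f\,d(P_n\circ\theta_{t_n}^{-1})=E\bigl[f(\theta_{t_n}(Y_n))\bigr]\ \longrightarrow\ E\bigl[f(\theta_t(Y))\bigr]=\int f\,d(P\circ\theta_t^{-1}),
\]
which is the weak convergence $P_n\circ\theta_{t_n}^{-1}\to P\circ\theta_t^{-1}$ claimed.

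There is no real obstacle beyond being a little careful that the continuous mapping argument is applied to the varying maps $\theta_{t_n}$; this is precisely the role of the joint continuity of $\Theta$ combined with Skorokhod coupling, since the ordinary continuous mapping theorem for weak convergence would only handle a fixed $\theta_t$.
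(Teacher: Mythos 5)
Your proof is correct and follows the same essential reduction as the paper: observe that $(t,\omega)\mapsto\theta_t(\omega)$ is jointly continuous, and then conclude that pushing forward by a jointly continuous family of maps is continuous in $(t,P)$. Where the paper simply cites \cite[Theorem~8.10.61]{bogachev} for this last implication, you supply a self-contained argument via Skorokhod representation and dominated convergence (using that $\Omega$ is Polish and $\mathbb{R}_+\times\mathfrak{P}(\Omega)$ is metrizable, so sequential continuity suffices); both routes are valid and yours is arguably more elementary.
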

\begin{proof}
	This follows from the continuity of \((t, \omega) \mapsto \theta_t (\omega)\) and \cite[Theorem 8.10.61]{bogachev}. 
\end{proof}

\begin{lemma} \label{lem: filtraation shift}
	Let \(Y = (Y_t)_{t \geq 0}\) be an \(\bR^\d\)-valued continuous process and set \(\mathcal{F}^Y_t := \sigma (Y_s, s\leq t)\) for \(t \in \mathbb{R}_+\). Then, \(
	Y^{-1} (\mathcal{F}_{s+}) = \mathcal{F}^Y_{s+}\) for all \(s \in \mathbb{R}_+.\)
\end{lemma}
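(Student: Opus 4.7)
The approach is to combine the standard identity $\mathcal{F}^Y_t = Y^{-1}(\mathcal{F}_t)$ at each fixed $t$ with a $\limsup$ construction that aligns the representatives of a preimage across a countable sequence of times. Since $Y_u = X_u \circ Y$ for every $u \geq 0$, the usual formula for $\sigma$-algebras under composition yields
\[
\mathcal{F}^Y_t = \sigma(X_u \circ Y : u \leq t) = Y^{-1}(\sigma(X_u : u \leq t)) = Y^{-1}(\mathcal{F}_t), \quad t \in \mathbb{R}_+.
\]
Intersecting along the sequence $t = s + 1/n$ (which is cofinal in $\{t : t > s\}$) and using monotonicity of \(\F\), the task reduces to establishing the set-theoretic identity
\[
\bigcap_{n \in \mathbb{N}} Y^{-1}(\mathcal{F}_{s+1/n}) = Y^{-1}\Big( \bigcap_{n \in \mathbb{N}} \mathcal{F}_{s+1/n} \Big).
\]

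The inclusion $\supseteq$ is immediate: if $A$ lies in every $\mathcal{F}_{s+1/n}$, then $Y^{-1}(A)$ lies in every $Y^{-1}(\mathcal{F}_{s+1/n})$. The subtle direction is $\subseteq$, and this is where I expect the main obstacle. Although preimages commute with arbitrary unions and intersections of \emph{individual sets}, the induced map on $\sigma$-algebras need not commute with intersections: if $B \in \bigcap_n Y^{-1}(\mathcal{F}_{s+1/n})$ and one writes $B = Y^{-1}(A_n)$ for some $A_n \in \mathcal{F}_{s+1/n}$, the representatives $A_n$ may well disagree off the image $Y(\Omega')$ where $Y$ is defined, so there is no a priori single $A \in \mathcal{F}_{s+}$ with $Y^{-1}(A) = B$.

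To get around this, the plan is to pick any such representatives $A_n$ and then define
\[
A := \limsup_n A_n = \bigcap_{m \in \mathbb{N}} \bigcup_{n \geq m} A_n.
\]
Monotonicity of \(\F\) gives $\bigcup_{n \geq m} A_n \in \mathcal{F}_{s+1/m}$, and since this sequence is decreasing in $m$, intersecting only over $m \geq k$ shows $A \in \mathcal{F}_{s+1/k}$ for every $k$, so $A \in \mathcal{F}_{s+}$. Distributivity of $Y^{-1}$ over arbitrary unions and intersections of individual sets then gives
\[
Y^{-1}(A) = \bigcap_{m \in \mathbb{N}} \bigcup_{n \geq m} Y^{-1}(A_n) = \bigcap_{m \in \mathbb{N}} \bigcup_{n \geq m} B = B,
\]
which concludes the argument. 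Note that continuity of $Y$ plays no role in this reasoning; it serves only to make $Y$ a map into $\Omega = C(\mathbb{R}_+; \mathbb{R}^\d)$, and everything else is purely measure-theoretic.
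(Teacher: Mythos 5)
Your proof is correct and follows essentially the same route as the paper: reduce to $\mathcal{F}^Y_t = Y^{-1}(\mathcal{F}_t)$, observe that the inclusion $Y^{-1}(\mathcal{F}_{s+}) \subseteq \mathcal{F}^Y_{s+}$ is immediate from monotonicity, and for the converse pick representatives $A_n \in \mathcal{F}_{s+1/n}$ of a given $B \in \mathcal{F}^Y_{s+}$ and replace them by $\limsup_n A_n \in \mathcal{F}_{s+}$, which has the same preimage. The paper's proof uses exactly this $\limsup$ device (called $G$ there), so your argument matches it; you simply make more explicit both the obstacle (incoherent representatives) and the verification that the $\limsup$ set lies in $\mathcal{F}_{s+}$.
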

\begin{proof}
	First of all, it is clear that \(Y^{-1} (\mathcal{F}_s) = \mathcal{F}^Y_s\) for all \(s \in \mathbb{R}_+\).
	Since \(\mathcal{F}_{s+} \subset \mathcal{F}_{s + \varepsilon}\) for all \(\varepsilon > 0\), we obtain
	\(
	Y^{-1} (\mathcal{F}_{s +}) \subset \bigcap_{\varepsilon > 0} Y^{-1} (\mathcal{F}_{s + \varepsilon}) = \mathcal{F}^Y_{s +}.
	\)
	Conversely, take \(A \in \mathcal{F}^Y_{s+}\). By definition, \(A \in Y^{-1} (\mathcal{F}_{s + 1/n})\) for every \(n \in \mathbb{N}\). Hence, for each \(n \in \mathbb{N}\) there exists a set \(B_{n} \in \mathcal{F}_{s + 1/n}\) such that \(A = Y^{-1}(B_n)\). Define 
	\(
	G := \bigcap_{n \in \mathbb{N}} \bigcup_{m \geq n} B_{m}\) and notice that \(G \in \mathcal{F}_{s +}.
	\)
	Finally, as
	\(
	Y^{-1} (G) = \bigcap_{n \in \mathbb{N}} \bigcup_{m \geq n} Y^{-1} (B_m) = A,
	\)
	we conclude that \(A \in Y^{-1}(\mathcal{F}_{s+})\). The proof is complete.
\end{proof}

The following lemma is a restatement of \cite[Lemma 2.9 a)]{jacod80} for a path-continuous setting. 

\begin{lemma} \label{lem: jacod restatements}
	Let \(\B^* = (\Omega^*, \mathcal{F}^*, (\mathcal{F}^*_t)_{t \geq 0}, P^*)\) and  \(\B' = (\Omega', \mathcal{F}', (\mathcal{F}'_t)_{t \geq 0}, P')\) be two filtered probability spaces with right-continuous filtrations and the property that there is a map \(\phi \colon \Omega' \to \Omega^*\) such that
	\(
	\phi^{-1} (\mathcal{F}^*) \subset \mathcal{F}',P^* = P' \circ \phi^{-1}\) and \(\phi^{-1} (\mathcal{F}^*_t) = \mathcal{F}'_t\) for all \(t \in \mathbb{R}_+\).
	Then,
	\(X^*\) is a \(\d\)-dimensional continuous semimartingale (local martingale) on \(\B^*\) if and only if \(X' = X^* \circ \phi\) is a \(\d\)-dimensional continuous semimartingale (local martingale) on~\(\B'\). Moreover, \((B^*, C^*)\) are the characteristics of \(X^*\) if and only if \((B^* \circ \phi, C^* \circ \phi)\) are the characteristics of \(X' = X^* \circ \phi\).
\end{lemma}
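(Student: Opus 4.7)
The plan is to exploit the functorial transport of structure under the measure-preserving map $\phi$. First I would establish two transfer principles. Since $\phi^{-1}(\mathcal{F}^*_t) = \mathcal{F}'_t$, every $(\mathcal{F}^*_t)$-stopping time $\tau^*$ yields an $(\mathcal{F}'_t)$-stopping time $\tau^* \circ \phi$, and conversely every $(\mathcal{F}'_t)$-stopping time factors (up to $P'$-null sets) as $\tau^* \circ \phi$ via the factorization lemma applied to the $\phi$-generated $\sigma$-algebra. Using $P^* = P' \circ \phi^{-1}$ together with $\mathcal{F}'_s = \phi^{-1}(\mathcal{F}^*_s)$, the conditional expectation identity
\[
E^{P'}\big[\, Z^* \circ \phi \,\big|\, \mathcal{F}'_s\, \big] = E^{P^*}\big[\, Z^* \,\big|\, \mathcal{F}^*_s\, \big] \circ \phi \quad P'\text{-a.s.}
\]
for bounded $\mathcal{F}^*$-measurable $Z^*$ follows by direct verification on sets of the form $\phi^{-1}(A^*)$. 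Moreover, the measure-preserving property ensures that $f^* \circ \phi = g^* \circ \phi$ holding $P'$-a.s.\ forces $f^* = g^*$ $P^*$-a.s., so the identity transports equalities cleanly in both directions.

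Next I would handle the local martingale equivalence. Given a continuous $(\mathcal{F}^*_t)$-adapted $M^*$, set $M' := M^* \circ \phi$ and consider the intrinsic localizing times $\rho^*_n := \inf\{t \geq 0 \colon |M^*_t| \geq n\}$; since trajectories correspond under $\phi$, these satisfy $\rho^*_n \circ \phi = \rho'_n$, where $\rho'_n$ is the analogous localizing time for $M'$, and the stopped processes are bounded by $n$. The martingale property of $(M^*)^{\rho^*_n}$ under $P^*$ is then equivalent to that of $(M')^{\rho'_n}$ under $P'$ via the conditional expectation identity (applied in either direction), and the localizations exhaust $\bR_+$ simultaneously on both sides. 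For the semimartingale statement, the forward direction composes the canonical decomposition $X^* = X^*_0 + M^* + B^*$ with $\phi$: predictability, continuity, and finite variation of $B^*$ all transfer to $B^* \circ \phi$ because $\mathcal{F}'_t = \phi^{-1}(\mathcal{F}^*_t)$; $M^* \circ \phi$ is a local martingale by the previous step; and the quadratic covariation transports to $C^* \circ \phi$ by the pathwise definition for continuous local martingales. For the backward direction, I would take the decomposition $X' = X'_0 + M' + B'$ on $\B'$ and apply the factorization lemma to write $M' = M^* \circ \phi$ and $B' = B^* \circ \phi$ $P'$-a.s.\ with continuous $(\mathcal{F}^*_t)$-adapted $M^*, B^*$; the local martingale step then promotes $M^*$ to a local $(\mathcal{F}^*_t, P^*)$-martingale, and uniqueness of the characteristics forces $(B^*, C^*) \circ \phi = (B', C')$.

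The hard part will be the factorization step in the backward direction: the naive pointwise factorization lemma produces $\mathcal{F}^*_t$-measurable factors at each fixed time $t$ separately, but one needs continuous adapted versions $M^*, B^*$ on all of $\bR_+$ simultaneously. The approach is to factor at a countable dense subset of times (where the factorization lemma applies directly), and then exploit the $P'$-a.s.\ continuity of $M'$ and $B'$ together with the measure-preserving property of $\phi$, which transfers $P'$-a.s.\ uniform continuity on compacts to $P^*$-a.s.\ uniform continuity, to extend to continuous modifications on all of $\bR_+$. A similar care is needed to ensure $B^*$ is $(\mathcal{F}^*_t)$-predictable and of finite variation, which follows by the corresponding transfer of the predictable $\sigma$-field and of the total-variation measure under $\phi^{-1}$.
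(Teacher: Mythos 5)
The paper does not give an independent proof of this lemma: it is explicitly presented (in the sentence preceding the statement) as a restatement of Lemma~2.9\,a) of \cite{jacod80} adapted to the continuous-path case, and the paper simply cites that reference. Your proposal, by contrast, supplies a self-contained argument, and it is sound in outline. The conditional-expectation transfer on sets of the form $\phi^{-1}(A^*)$, the pathwise correspondence of the intrinsic localizing times $\rho_n^* \circ \phi = \rho_n'$, and the injectivity of $f^* \mapsto f^* \circ \phi$ up to null sets are exactly the mechanisms needed, and you correctly isolate the factorization of $M', B'$ in the backward semimartingale direction as the delicate step; the rational-time factorization followed by the measurability of the ``continuous extension exists'' event and the pull-back argument does go through.

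Two remarks that would streamline the backward direction. For the semimartingale property itself, factorization can be bypassed entirely via the Bichteler--Dellacherie characterization: a simple predictable process on $\Omega^*$ composes with $\phi$ to one on $\Omega'$, the elementary integrals correspond pathwise, and boundedness in probability transfers under $P^* = P'\circ \phi^{-1}$; this is precisely the style of argument the paper itself uses for a closely related stability statement (Lemma~\ref{lem_pasting_s}). For the characteristics, once one already knows $X^*$ is a semimartingale, the backward implication reduces to the forward one plus uniqueness: $X^*$ has some characteristics $(\tilde B^*, \tilde C^*)$, the forward direction shows $(\tilde B^*\circ\phi, \tilde C^*\circ\phi)$ are characteristics of $X'$, and uniqueness together with your observation that $f^*\circ\phi = g^*\circ\phi$ $P'$-a.s.\ forces $f^* = g^*$ $P^*$-a.s.\ gives $(\tilde B^*, \tilde C^*) = (B^*, C^*)$, so no factorization of $B'$ or $C'$ is needed at all.
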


\begin{lemma} \label{lem: set identity}
	We have
	\begin{align*}\big\{(t&, \omega, P) \colon P \in \fPas (t), 
	(\llambda \otimes P)\text{-a.e. } (dB^P_{\cdot + t} /d\llambda, dC^P_{\cdot + t}/d\llambda) \in \Theta (\cdot + t, \omega \otimes_t X)  \big\} \\& = \big\{ (t, \omega, P) \colon P_t \in \fPas,
	(\llambda \otimes P_t)\text{-a.e. } (dB^{P_t} /d\llambda, dC^{P_t}/d\llambda) \in \Theta (\cdot + t, \omega \ \widetilde{\otimes}_t\ X)  \big\}.\end{align*}
\end{lemma}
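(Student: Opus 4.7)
The plan is to observe that the right-hand set is essentially the left-hand set expressed after pushing forward by the shift map $\theta_t(\omega) := \omega(\cdot + t)$. Concretely, I would first apply Lemma \ref{lem: jacod restatements} with $\phi := \theta_t$, taking $P^* := P_t$ on the filtered space $(\Omega, \mathcal{F}, (\mathcal{F}_{s+})_{s \geq 0})$ and $P' := P$ on $(\Omega, \mathcal{F}, (\mathcal{F}^{X_{\cdot + t}}_{s+})_{s \geq 0})$. The relation $P_t = P \circ \theta_t^{-1}$ is just the definition of $P_t$, and the required filtration identity $\theta_t^{-1}(\mathcal{F}_{s+}) = \mathcal{F}^{X_{\cdot + t}}_{s+}$ is exactly Lemma \ref{lem: filtraation shift} applied to $Y := X \circ \theta_t = X_{\cdot + t}$. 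Since $X \circ \theta_t = X_{\cdot + t}$, Lemma \ref{lem: jacod restatements} immediately yields that $P_t \in \fPs$ if and only if $P \in \fPs(t)$, together with the characteristics identity $(B^P_{\cdot + t}, C^P_{\cdot + t}) = (B^{P_t} \circ \theta_t, C^{P_t} \circ \theta_t)$.

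From this identity, absolute continuity transfers pointwise along $\theta_t$, so $P \in \fPas(t)$ if and only if $P_t \in \fPas$, and the Radon--Nikodym densities satisfy
\[
\big(dB^P_{\cdot + t}/d\llambda,\ dC^P_{\cdot + t}/d\llambda\big)(s, \omega') \;=\; \big(dB^{P_t}/d\llambda,\ dC^{P_t}/d\llambda\big)(s, \theta_t\omega')
\]
for $(\llambda \otimes P)$-a.e.~$(s, \omega')$.

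It remains to reconcile the two concatenations. A direct evaluation of the defining formulas shows that for any $\omega, \omega' \in \Omega$ and $r \geq t$,
\[
(\omega \otimes_t \omega')(r) \;=\; \omega(t) + \omega'(r) - \omega'(t) \;=\; \omega(t) + (\theta_t\omega')(r - t) - (\theta_t\omega')(0) \;=\; \big(\omega\ \widetilde{\otimes}_t\ \theta_t\omega'\big)(r),
\]
while both sides agree with $\omega(r)$ for $r < t$. Hence $\omega \otimes_t \omega' = \omega\ \widetilde{\otimes}_t\ \theta_t\omega'$ as elements of $\Omega$. Combining this with the density transformation above and the change of variables formula $(\llambda \otimes P_t) = (\llambda \otimes P) \circ (\mathrm{id} \times \theta_t)^{-1}$, the $(\llambda \otimes P)$-a.e.~condition defining the left-hand set coincides with the $(\llambda \otimes P_t)$-a.e.~condition defining the right-hand set, which proves the claimed equality of sets. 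The only mildly delicate part of the argument is the filtration transfer needed to legitimately invoke Lemma \ref{lem: jacod restatements}; the remaining steps are formal manipulations of pushforwards and Radon--Nikodym derivatives.
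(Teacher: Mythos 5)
Your proof is correct and follows essentially the same route as the paper's: the paper's one-line argument is precisely the identity $(\omega\ \widetilde{\otimes}_t\ X) \circ \theta_t = \omega \otimes_t X$ together with Lemmata \ref{lem: filtraation shift} and \ref{lem: jacod restatements}, which is exactly the skeleton you spell out in more detail.
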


\begin{proof}
	Since \((\omega\ \widetilde{\otimes}_t\ X) \circ \theta_t = \omega \otimes_t X\), the claim follows from Lemmata \ref{lem: filtraation shift} and \ref{lem: jacod restatements}.
\end{proof}

Finally, we are in the position to prove the measurable graph condition.

\begin{lemma} \label{lem: correspondence 2}
	\label{lem_atw_measurable}
	The set $\{(t,\omega, P) \in \of 0, \infty\of \hspace{0.05cm} \times\hspace{0.05cm} \mathfrak{P}(\Omega) \colon P \in \cA(t,\omega) \}$ is Borel. 
\end{lemma}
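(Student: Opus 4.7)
By Lemma \ref{lem: set identity}, the defining conditions of $\cA(t,\omega)$ split into three pieces: (a) $P_t \in \fPas$, (b) $P(X^t = \omega^t) = 1$, and (c) the $(\llambda \otimes P_t)$-a.e.\ containment of $(dB^{P_t}/d\llambda, dC^{P_t}/d\llambda)(s,X)$ in $\Theta(s + t, \omega\ \widetilde{\otimes}_t\ X)$. The plan is to show each of (a)--(c) carves out a Borel subset of $\of 0, \infty\of \hspace{0.05cm}\times\hspace{0.05cm} \mathfrak{P}(\Omega)$ and intersect.

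Condition (b) is the easiest: the set $\{(t,\omega,\omega') \in \bR_+ \times \Omega \times \Omega : \omega^t \neq (\omega')^t\}$ is open by joint continuity of the stopping map, so its indicator is lower semicontinuous; writing the indicator as a monotone supremum of bounded continuous functions and applying a Portmanteau-type argument shows $(t,\omega,P) \mapsto P(X^t \neq \omega^t)$ is lower semicontinuous, whence its zero level set is Borel. For (a), continuity of the shift $(t,P) \mapsto P_t$ from Lemma \ref{lem: map prod meas} reduces matters to Borel measurability of $\fPas \subset \mathfrak{P}(\Omega)$, the path-continuous analogue of results in \cite{neufeld2014measurability} obtained via canonical Doob--Meyer selections on the coordinate space.

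Condition (c) is the delicate step. The plan is to first construct jointly Borel versions
\[
\mathfrak{b}, \mathfrak{c} \colon \fPas \times \of 0, \infty\of \longrightarrow \bR^\d \times \bS
\]
of the differential characteristics, i.e.\ Borel maps with $(\mathfrak{b}(Q,s,X), \mathfrak{c}(Q,s,X)) = (dB^Q_s/d\llambda, dC^Q_s/d\llambda)$ holding $(\llambda \otimes Q)$-a.e.\ for every $Q \in \fPas$. Once this is in hand, Standing Assumption \ref{SA: meas gr} together with continuity of the concatenation $(t,\omega,\omega') \mapsto \omega\ \widetilde{\otimes}_t\ \omega'$ ensures that the set
\[
E := \big\{ (t,\omega,Q,s,\omega') : (\mathfrak{b}(Q,s,\omega'), \mathfrak{c}(Q,s,\omega')) \notin \Theta\big(s + t,\ \omega\ \widetilde{\otimes}_t\ \omega'\big) \big\}
\]
is Borel. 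A Fubini-type measurable integration then makes $(t,\omega,Q) \mapsto (\llambda \otimes Q)(E_{t,\omega,Q})$ Borel, and precomposing with the continuous map $(t,\omega,P) \mapsto (t,\omega,P_t)$ from Lemma \ref{lem: map prod meas} finishes (c).

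\textbf{Main obstacle.} The substantive technical work is the existence of a jointly Borel version of the differential characteristics on the whole of $\fPas$, and the verification — through Lemmata \ref{lem: filtraation shift} and \ref{lem: jacod restatements} — that this Borel version is compatible with the shift $P \mapsto P_t$, so that the reformulation given by Lemma \ref{lem: set identity} can actually be plugged into the measurability argument above. This is where a careful extension of the measurability apparatus of \cite{neufeld2014measurability} to the time-inhomogeneous, path-dependent setup is needed; everything else amounts to bookkeeping through continuous reparameterizations.
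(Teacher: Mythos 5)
Your plan matches the paper's proof in essence: both decompose the definition of $\cA(t,\omega)$ into the three pieces via Lemma \ref{lem: set identity}, handle the initial condition $P(X^t = \omega^t) = 1$ by a continuity/measurability argument, reduce the differential-characteristics constraint to the time-homogeneous set $\fPas$ through the shift $P \mapsto P_t$ and Lemma \ref{lem: map prod meas}, and then appeal to the jointly Borel version of the differential characteristics on $\fPas$. The paper compresses the first step to a citation of \cite[Theorem~8.10.61]{bogachev} (your lower-semicontinuity argument is a fine alternative for that piece), and for the third step it cites \cite[Theorem~2.6]{neufeld2014measurability} directly plus Bogachev again.

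One nuance worth flagging in your ``main obstacle'' paragraph: you describe the crux as extending the measurability apparatus of \cite{neufeld2014measurability} to the time-inhomogeneous, path-dependent setup. In the paper this is not what happens; the result of \cite{neufeld2014measurability} is used \emph{unchanged} on $\fPas = \fPas(0)$, and the time-inhomogeneity is eliminated upstream by Lemma \ref{lem: set identity} (which itself rests on Lemmata \ref{lem: filtraation shift} and \ref{lem: jacod restatements}). So the genuinely new work is the shift reduction, not a re-derivation of joint Borel measurability of characteristics. Your own plan in fact already embeds this reduction (precomposing with $(t,\omega,P) \mapsto (t,\omega,P_t)$), so the closing sentence understates how much your first three paragraphs already did the job.
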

\begin{proof} 
	Notice that
	$\{ (t,\omega, P) \in \of 0, \infty\of \hspace{0.05cm} \times\hspace{0.05cm} \mathfrak{P}(\Omega) \colon P(X^t = \omega^t) = 1 \} $
	is Borel by \cite[Theorem~8.10.61]{bogachev}.
	Further, notice that 
	\begin{align*}
	\big\{ (t&, \omega, P)  \colon P \in \fPas, \  ( \llambda \otimes P)\text{-a.e. } (dB^P /d\llambda, dC^P/d\llambda) \in \Theta (\cdot + t, \omega \ \widetilde{\otimes}_t\ X)  \big\}
	\\&= \big\{ (t, \omega, P) \colon P \in \fPas, \  (\llambda \otimes P)( (\cdot + t, \omega \ \widetilde{\otimes}_t\ X, dB^P /d\llambda, dC^P/d\llambda) \not \in \on{gr} \Theta ) = 0 \big\}.
	\end{align*}
	By virtue of \cite[Theorem 2.6]{neufeld2014measurability} and \cite[Theorem~8.10.61]{bogachev}, this set is Borel.
	Consequently, the lemma follows from Lemmata \ref{lem: map prod meas} and \ref{lem: set identity}.
\end{proof}

\subsection{Stability under Conditioning}
Next, we check stability under conditioning. Throughout this section, we fix \((t^*, \omega^*) \in \of 0, \infty\of\), a stopping time \(\tau\) with \(t^* \leq \tau <\infty\), and a probability measure \(P\) on \((\Omega, \mathcal{F})\) such that \(P (X = \omega^* \text{ on } [0, t^*]) = 1\). We denote by \(P(\cdot | \mathcal{F}_\tau)\) a version of the regular conditional \(P\)-probability given \(\mathcal{F}_\tau\).
The following three observations are simple but useful. 
\begin{lemma} \label{lem: countably generated easy observation}
	There exists a \(P\)-null set \(N \in \mathcal{F}_\tau\) such that \(P (A | \mathcal{F}_\tau) (\omega) = \1_A (\omega)\) for all \(A \in \mathcal{F}_\tau\) and~\(\omega \not \in N\).
\end{lemma}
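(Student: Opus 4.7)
The statement is the familiar fact that a regular conditional probability reproduces indicator functions of sets in the conditioning $\sigma$-algebra, with the exceptional null set chosen uniformly in the set. The strategy is to reduce the uncountable ``for all $A \in \mathcal{F}_\tau$'' to a countable union of null sets via a $\pi$--$\lambda$ argument, exploiting the fact that the canonical filtration on the path space $\Omega = C(\mathbb{R}_+;\mathbb{R}^\d)$ is countably generated.

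First, I would observe that $\mathcal{F}_\tau$ is countably generated. Since $X$ has continuous paths, $\mathcal{F}_t = \sigma(X_s : s \in \mathbb{Q} \cap [0,t])$, and an analogous argument for the stopped process $X^\tau$ yields $\mathcal{F}_\tau = \sigma(X_{s \wedge \tau} : s \in \mathbb{Q}_+)$. Therefore there is a countable $\pi$-system $\mathcal{E} \subset \mathcal{F}_\tau$ (for instance the finite-dimensional cylinder sets built from rational times and open rational rectangles in $\mathbb{R}^\d$) with $\sigma(\mathcal{E}) = \mathcal{F}_\tau$.

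Next, for each $E \in \mathcal{E}$, the defining relation of the regular conditional probability gives
\[
\int_B P(E \mid \mathcal{F}_\tau)(\omega) \, P(d\omega) = P(E \cap B) = \int_B \mathbf{1}_E(\omega) \, P(d\omega), \qquad B \in \mathcal{F}_\tau,
\]
so $P(E\mid \mathcal{F}_\tau) = \mathbf{1}_E$ outside a $P$-null set $N_E \in \mathcal{F}_\tau$. Setting $N := \bigcup_{E \in \mathcal{E}} N_E$, the countability of $\mathcal{E}$ guarantees that $N$ is a $P$-null set in $\mathcal{F}_\tau$. Now, for $\omega \notin N$, both $\mu_\omega := P(\,\cdot\,\mid \mathcal{F}_\tau)(\omega)$ (a genuine probability measure by the regularity of the conditional probability) and $\delta_\omega$ are probability measures on $(\Omega, \mathcal{F}_\tau)$, and by construction they agree on the $\pi$-system $\mathcal{E}$. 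The collection
\[
\mathcal{D}_\omega := \{A \in \mathcal{F}_\tau : \mu_\omega(A) = \delta_\omega(A)\}
\]
is thus a Dynkin system containing $\mathcal{E}$, so by Dynkin's $\pi$--$\lambda$ theorem $\mathcal{D}_\omega = \sigma(\mathcal{E}) = \mathcal{F}_\tau$. This is precisely the desired identity $P(A \mid \mathcal{F}_\tau)(\omega) = \mathbf{1}_A(\omega)$ for all $A \in \mathcal{F}_\tau$ and all $\omega \notin N$.

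The only step requiring a moment's care is the countable generation of $\mathcal{F}_\tau$; once that is in hand, everything else is a standard monotone-class argument. No measurability or regularity issues intervene because the excluded null set $N$ is explicitly of the form $\bigcup_{E \in \mathcal{E}} N_E$ with each $N_E \in \mathcal{F}_\tau$, so $N$ automatically lies in $\mathcal{F}_\tau$ as required.
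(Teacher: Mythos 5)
Your proof is correct and follows the same route as the paper, which simply notes that $\mathcal{F}_\tau = \sigma(X_{t\wedge\tau}: t\in\mathbb{Q}_+)$ is countably generated (citing \cite[Lemma~1.3.3]{SV}) and then invokes \cite[Theorem~9.2.1]{stroockanalytic} for the classical conclusion. You have merely unpacked the classical argument (countable $\pi$-system, union of null sets, Dynkin's $\pi$--$\lambda$ theorem) explicitly rather than citing it.
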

\begin{proof}
	Notice that \(\mathcal{F}_\tau = \sigma (X_{t \wedge \tau}, t \in \mathbb{Q}_+)\) is countably generated (see \cite[Lemma 1.3.3]{SV}). Now, the claim is classical (\cite[Theorem 9.2.1]{stroockanalytic}).
\end{proof}

\begin{lemma} \label{lem: eq concatenation}
	If \(\omega \mapsto Q_\omega\) is a kernel from \(\mathcal{F}\) into \(\mathcal{F}\) such that 
	\(Q_\omega ( \omega = X \text{ on } [0, \tau(\omega)]) = 1\) for \(P\)-a.a. \(\omega \in \Omega\), then 
	\(
	Q_\omega (\omega \otimes_{\tau(\omega)} X = X) = 1
	\)
	for \(P\)-a.a. \(\omega \in \Omega\).
	In particular, there exists a \(P\)-null set \(N \in \mathcal{F}_\tau\) such that \(P (\omega \otimes_{\tau(\omega)} X = X | \mathcal{F}_\tau) (\omega) = 1\) for all \(\omega \not \in N\).
\end{lemma}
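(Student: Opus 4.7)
The proof naturally splits into two steps matching the two assertions of the lemma: the main implication is a direct pointwise unfolding of the definition of $\otimes_t$, and the ``in particular'' clause then follows by specialising the kernel to $Q_\omega := P(\cdot \mid \mathcal{F}_\tau)(\omega)$ and invoking Lemma \ref{lem: countably generated easy observation}.

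For the main implication, I would fix $\omega$ outside the $P$-null set on which the hypothesis fails, abbreviate $s := \tau(\omega)$, and work on the event $\{X_r = \omega(r)$ for all $r \in [0, s]\}$, which has full $Q_\omega$-measure. By the very definition of $\otimes_s$, on $[0, s)$ one has $(\omega \otimes_s X)(r) = \omega(r) = X_r$ by hypothesis, while on $[s, \infty)$ one has $(\omega \otimes_s X)(r) = \omega(s) + X_r - X_s = X_r$, since $X_s = \omega(s)$ on the event of interest. Hence $\omega \otimes_{\tau(\omega)} X = X$ holds $Q_\omega$-almost surely for $P$-a.a.\ $\omega$, which is the first claim.

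For the ``in particular'' part, set $Q_\omega := P(\cdot \mid \mathcal{F}_\tau)(\omega)$; it suffices to verify the hypothesis of the main implication for this choice. Both the stopped canonical process $X^\tau$ (viewed as an $\Omega$-valued random element) and the finite stopping time $\tau$ are $\mathcal{F}_\tau$-measurable, and since $\Omega$ is Polish the singleton $\{\omega^{\tau(\omega)}\} \subset \Omega$ is Borel; consequently $\{\tau = \tau(\omega)\}$ and $\{X^\tau = \omega^{\tau(\omega)}\}$ both lie in $\mathcal{F}_\tau$. Lemma \ref{lem: countably generated easy observation} produces a single $P$-null set $N \in \mathcal{F}_\tau$ with $Q_\omega(A) = \1_A(\omega)$ for every $A \in \mathcal{F}_\tau$ and every $\omega \notin N$; applying this to the two sets above and intersecting forces $Q_\omega(X_r = \omega(r)\text{ for all } r \in [0, \tau(\omega)]) = 1$ whenever $\omega \notin N$, and the main implication then gives $P(\omega \otimes_{\tau(\omega)} X = X \mid \mathcal{F}_\tau)(\omega) = 1$ off $N$. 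I foresee no serious obstacle here: the only non-trivial ingredient beyond unfolding definitions is the uniformity in $A$ of the null set coming from Lemma \ref{lem: countably generated easy observation}, which is exactly why the countable generation of $\mathcal{F}_\tau$ in the canonical setup is needed.
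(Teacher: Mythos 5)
Your proposal is correct and takes essentially the same route as the paper: the authors dismiss the first claim as ``obvious'' (your pointwise unfolding of $\otimes_s$ on the full-measure event is exactly that computation made explicit) and derive the second claim from the first via Lemma~\ref{lem: countably generated easy observation}, which is precisely your choice $Q_\omega := P(\cdot\mid\mathcal{F}_\tau)(\omega)$ together with the uniform null set.
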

\begin{proof}
	The first claim is obvious and the second follows from the first and Lemma \ref{lem: countably generated easy observation}.
\end{proof}

\begin{lemma} \label{lem: predi pasting}
	Take a predictable process \(H = (H_t)_{t \geq 0}, t \in \mathbb{R}_+\) and \(\omega \in \Omega\). Then, \(H_{\cdot + t} (\omega \otimes_{t} X)\) is predictable for the (right-continuous) filtration generated by \(X_{\cdot + t}\).
\end{lemma}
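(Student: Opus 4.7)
The plan is to recast the statement as a measurability property of the deterministic shift--concatenation map
\[
\phi \colon \bR_+ \times \Omega \to \bR_+ \times \Omega, \qquad \phi(s, \omega') := (s + t, \omega \otimes_t \omega'),
\]
in which \(\omega\) and \(t\) are fixed. Writing \(\mathbf{G}^+\) for the right-continuous filtration generated by \(X_{\cdot + t}\) and \(\mathscr{P}(\mathbf{G}^+)\) for the associated predictable \(\sigma\)-field on \(\bR_+ \times \Omega\), the identity \(H_{\cdot + t}(\omega \otimes_t \cdot) = H \circ \phi\) reduces matters to showing that \(\phi\) is \((\mathscr{P}(\mathbf{G}^+), \mathscr{P})\)-measurable.

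For this, I would verify the condition on the standard generators of \(\mathscr{P}\), namely sets of the form \(\{0\} \times A_0\) with \(A_0 \in \mathcal{F}_0\) and \((u, v] \times A\) with \(0 \leq u < v\) and \(A \in \mathcal{F}_u\), case-splitting by the relative position of \(t\) with respect to \(u\) and \(v\). When \(u < t\), the set \(A\) depends only on coordinates prior to \(t\), on which \(\omega \otimes_t \omega'\) coincides with the fixed path \(\omega\), so the preimage collapses to either \(\varnothing\) or the product of a (possibly degenerate) left-closed time interval with the whole of \(\Omega\); both are visibly in \(\mathscr{P}(\mathbf{G}^+)\). The only substantive case is \(u \geq t\): here the preimage equals \((u - t, v - t] \times B\) with \(B := \{\omega' \colon \omega \otimes_t \omega' \in A\}\), and the decisive observation is that \((\omega \otimes_t \omega')(r) = \omega(t) + \omega'(r) - \omega'(t)\) for \(r \in [t, u]\). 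This makes \(B \in \sigma(X_{s+t} \colon 0 \leq s \leq u - t) \subseteq \mathcal{G}^+_{u-t}\), so \((u-t, v-t] \times B \in \mathscr{P}(\mathbf{G}^+)\), as required.

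The main obstacle is of a purely bookkeeping nature: handling the boundary regime \(u < t \leq v\), in which the preimage inherits a time interval of the form \([0, v - t]\) rather than a half-open \((u - t, v - t]\). This is harmless once one observes that \(A \in \mathcal{F}_u\) together with \(u < t\) forces \(\{\omega' \colon \omega \otimes_t \omega' \in A\}\) to be either \(\Omega\) or \(\varnothing\); the offending set then reduces to \(\{0\} \times \Omega\) together with \((0, v - t] \times \Omega\), or to \(\varnothing\), all of which lie in \(\mathscr{P}(\mathbf{G}^+)\). No further stochastic input is needed; the lemma is ultimately a purely deterministic measurability statement about \(\phi\).
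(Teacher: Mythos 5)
Your argument is correct, but it takes a different route from the paper: the paper's proof of this lemma is a one-line citation to Proposition~10.35\,(d) of Jacod's 1979 monograph \emph{Calcul stochastique et probl\`emes de martingales}, which packages exactly this kind of measurability statement for shifted and pasted processes, whereas you give a direct, self-contained verification by reducing the claim to the $(\mathscr{P}(\mathbf{G}^+),\mathscr{P})$-measurability of the deterministic map $\phi(s,\omega')=(s+t,\omega\otimes_t\omega')$ and checking it on the standard generators $\{0\}\times A_0$, $(u,v]\times A$ with $A\in\mathcal{F}_u$. The case analysis is clean and the key observation --- that for $u\geq t$ one has $(\omega\otimes_t\omega')(r)=\omega(t)+\omega'(r)-\omega'(t)$ for $r\in[t,u]$, so that $B=\{\omega'\colon\omega\otimes_t\omega'\in A\}\in\sigma(X_{s+t}\colon 0\leq s\leq u-t)$ --- is exactly right; in fact $B$ already lies in $\mathcal{G}_{u-t}$, so you do not even need the right-continuous regularization (the predictable $\sigma$-field is the same for $\mathbf{G}$ and $\mathbf{G}^+$ anyway). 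The boundary regime $u<t\leq v$ is also handled properly: since $A\in\mathcal{F}_u$ with $u<t$ only sees the frozen path $\omega$, the preimage degenerates to $[0,v-t]\times\Omega$ or $\emptyset$, both of which decompose into predictable rectangles. Your proof trades the brevity of a citation for transparency and shows clearly where the non-anticipative structure of the concatenation enters; the paper's reference buys conciseness but requires the reader to unpack a general result.
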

\begin{proof}
	This follows from \cite[Proposition~10.35 (d)]{jacod79}.
\end{proof}

The next lemma is a partial restatement of \cite[Theorem~1.2.10]{SV}.
\begin{lemma} \label{lem: stroock varadhan cond mg}
	If \(M - M^{t^*}\) is a \(P\)-martingale, then there exists a \(P\)-null set \(N\) such that, for all \(\omega \not \in N\), \(M - M^{\tau (\omega)}\) is a \(P(\cdot | \mathcal{F}_\tau) (\omega)\)-martingale.
\end{lemma}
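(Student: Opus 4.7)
The plan is to follow the classical disintegration argument from \cite[Theorem~1.2.10]{SV}: verify the required martingale identity on a countable dense family of test configurations and then collect the corresponding $P$-null sets into a single exceptional set. First I reduce to showing that $L := M - M^\tau$ is a $Q_\omega$-martingale for $P$-a.e.\ $\omega$, where $Q_\omega := P(\cdot \,|\, \mathcal{F}_\tau)(\omega)$; since $\tau$ is $Q_\omega$-a.s.\ equal to the constant $\tau(\omega)$ by Lemma~\ref{lem: countably generated easy observation}, $L$ coincides with $M - M^{\tau(\omega)}$ under $Q_\omega$. That $L$ itself is a $P$-martingale follows from the hypothesis via optional sampling of $M - M^{t^*}$ at the bounded stopping times $\tau \wedge n$ and a passage to the limit $n \to \infty$, using that $L^{\tau \wedge n}_t = L^\tau_t$ for $n \geq t$.

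To carry out the disintegration, I fix countable algebras $\mathscr{A}_s \subseteq \mathcal{F}_s$ with $\sigma(\mathscr{A}_s) = \mathcal{F}_s$ for every $s \in \mathbb{Q}_+$, which is available since $\mathcal{F}_s = \sigma(X_r : r \in \mathbb{Q}_+, r \leq s)$. For each $(s, t, B) \in \mathbb{Q}_+^2 \times \mathscr{A}_s$ with $s \leq t$, I introduce the $\mathcal{F}_\tau$-measurable functional
\[
\Phi_{s,t,B}(\omega) := E^{Q_\omega}\bigl[(L_t - L_s)\,\1_B\bigr]
\]
and aim to prove $\Phi_{s,t,B} = 0$ $P$-a.s. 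Testing against an arbitrary $A \in \mathcal{F}_\tau$ and unwinding the defining property of the regular conditional probability reduces this to $E^P[\1_{A \cap B}(L_t - L_s)] = 0$, which I verify by a three-way case split on $\{\tau > t\}$, $\{\tau \leq s\}$, and $\{s < \tau \leq t\}$: on $\{\tau > t\}$ the integrand vanishes; on $\{\tau \leq s\}$ it equals $M_t - M_s$ and $A \cap B \cap \{\tau \leq s\} \in \mathcal{F}_s$, so the martingale property of $M - M^{t^*}$ disposes of it (noting that $s \geq t^*$ holds automatically on this set, since $\tau \geq t^* > s$ would force $\{\tau \leq s\} = \emptyset$); on $\{s < \tau \leq t\}$ it equals $M_t - M_{\tau \wedge t}$ and $A \cap B \cap \{s < \tau \leq t\} \in \mathcal{F}_{\tau \wedge t}$, so optional sampling at the bounded stopping times $\tau \wedge t$ and $t$ finishes the case.

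Collecting the $P$-null exceptional sets across the countable index set yields a single $P$-null set off which the test identity holds for every triple, and by Lemma~\ref{lem: countably generated easy observation} it can be taken in $\mathcal{F}_\tau$. A standard monotone class argument then lifts the identity from $\mathscr{A}_s$ to all of $\mathcal{F}_s$, and right-continuity of expectations in $t$ combined with path-continuity of $M$ extends it from rational to real times, yielding the $Q_\omega$-martingale property of $M - M^{\tau(\omega)}$ off a $P$-null set. The main obstacle I anticipate is the measurability bookkeeping in the three-way case split — verifying $A \cap B \cap \{\tau \leq s\} \in \mathcal{F}_s$ and $A \cap B \cap \{s < \tau \leq t\} \in \mathcal{F}_{\tau \wedge t}$ from the characterization of the stopping-time $\sigma$-algebra — along with the measurability in $\omega$ of $\Phi_{s,t,B}$, which is standard for regular conditional probabilities but must be checked carefully since the filtration $\F$ is not assumed right-continuous.
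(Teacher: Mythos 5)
The paper does not actually prove this lemma; it is presented as a restatement of \cite[Theorem~1.2.10]{SV}, and your proposal is a self-contained reconstruction of the argument underlying that reference, which is the right way to fill in the details. The argument checks out: the reduction to $L := M - M^\tau$ (note $L = N - N^\tau$ for the $P$-martingale $N := M - M^{t^*}$, using $\tau \ge t^*$, so $L$ is itself a $P$-martingale by optional stopping) and the identification of $L$ with $M - M^{\tau(\omega)}$ under $Q_\omega$ via Lemma~\ref{lem: countably generated easy observation} are sound; the three-way split on $\{\tau > t\}$, $\{\tau \le s\}$, $\{s < \tau \le t\}$ correctly yields $L_t - L_s = 0$, $M_t - M_s$, and $M_t - M_{\tau\wedge t}$ respectively; and the measurability claims you single out as the obstacle do go through: $A \cap B \cap \{\tau \le s\} \in \mathcal{F}_s$ by the definition of $\mathcal{F}_\tau$ together with $B \in \mathcal{F}_s$, and for $A\cap B\cap\{s<\tau\le t\}\in\mathcal{F}_{\tau\wedge t}$ one verifies $\bigl(A\cap B\cap\{s<\tau\le t\}\bigr)\cap\{\tau\wedge t\le u\}\in\mathcal{F}_u$ for all $u$ by the cases $u<s$, $s\le u<t$, $u\ge t$. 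The null-set collection, monotone class, and rational-to-real steps are standard; the only place that deserves a word more of care is the last one, where you should note that the rational-time martingale identity first gives $Q_\omega$-uniform integrability of $\{L_t : t\in[0,T]\cap\mathbb{Q}\}$ (as a closed martingale family, using only countably many horizons $T$) before the path continuity of $L$ can be used to pass to all real times.
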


The following lemma should be compared to \cite[Theorem 3.1]{neufeld2017nonlinear}.
\begin{lemma} \label{lem: characteristics under conditioning}
	Suppose that \(P \in \fPs(t^*)\) and denote the \(P\)-semimartingale characteristics of \(X_{\cdot + t^*}\) by \((B_{\cdot + t^*}, C_{\cdot + t^*})\). Then, there exists a \(P\)-null set \(N\) such that, for all \(\omega \not \in N\), \(X_{\cdot + \tau(\omega)}\) is a \(P (\cdot | \mathcal{F}_\tau) (\omega)\)-semimartingale (for its right-continuous natural filtration) and the corresponding \(P(\cdot | \mathcal{F}_\tau)(\omega)\)-characteristics are given by 
	\begin{align*}
	&(B_{\cdot + \tau(\omega)} - B_{\tau (\omega)}) (\omega \otimes_{\tau (\omega)} X), \quad (C_{\cdot + \tau(\omega)} - C_{\tau (\omega)}) (\omega \otimes_{\tau (\omega)} X). 
	\end{align*}
\end{lemma}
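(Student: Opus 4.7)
The plan is to characterize \((B_{\cdot + t^*}, C_{\cdot + t^*})\) through a countable family of martingale identities, transfer each such identity to \(P(\cdot | \cF_\tau)(\omega)\) by means of Lemma~\ref{lem: stroock varadhan cond mg}, and then read off the claimed shifted characteristics after using Lemma~\ref{lem: eq concatenation} to replace \(X\) by the concatenation \(\omega \otimes_{\tau(\omega)} X\) under the conditional measure.

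Concretely, fix a countable family \(\{\phi_n\} \subset C^2(\bR^\d; \bR)\) with \(\phi_n, \nabla \phi_n, \nabla^2 \phi_n\) bounded, chosen so that the associated martingale problem identifies continuous semimartingale characteristics uniquely (for example, cut-off versions of the coordinate functions and of their products, indexed countably). By It\^o's formula and the definition of the characteristics,
\[
M^{n}_s := \phi_n(X_s) - \phi_n(X_{t^*}) - \int_{t^*}^{s} \nabla \phi_n(X_r)^{\top} dB_r - \tfrac{1}{2} \int_{t^*}^{s} \on{tr}\bigl[\nabla^2 \phi_n(X_r)\, dC_r\bigr], \quad s \geq t^*,
\]
is a local \(P\)-martingale for every \(n\). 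Pick a common localizing sequence \(\rho_k \uparrow \infty\) with \(\rho_k \geq t^*\) and set \(M^{n,k} := (M^n)^{\rho_k}\), which is a uniformly integrable \(P\)-martingale. Since \(\tau \geq t^*\), the optional sampling theorem yields that \(M^{n,k} - (M^{n,k})^{\tau}\) is again a \(P\)-martingale, and Lemma~\ref{lem: stroock varadhan cond mg} delivers a \(P\)-null set \(N_{n,k}\) such that \(M^{n,k} - (M^{n,k})^{\tau}\) is a \(P(\cdot | \cF_\tau)(\omega)\)-martingale for every \(\omega \notin N_{n,k}\).

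Put \(N := N_0 \cup \bigcup_{n,k} N_{n,k}\), where \(N_0\) is the \(P\)-null set from Lemma~\ref{lem: eq concatenation}. Fix \(\omega \notin N\) and write \(t_\omega := \tau(\omega)\). Since \(P(\cdot | \cF_\tau)(\omega)\)-a.s.\ \(X = \omega \otimes_{t_\omega} X\), we may substitute this concatenation into every integrand appearing in \(M^{n,k} - (M^{n,k})^{\tau}\) and translate time by \(t_\omega\). This expresses the resulting process, stopped at the shifted time \((\rho_k - t_\omega)^{+}\), exactly as the It\^o martingale associated with \(X_{\cdot + t_\omega}\) and the candidate characteristics
\[
\widetilde B_u := (B_{u + t_\omega} - B_{t_\omega})(\omega \otimes_{t_\omega} X), \qquad \widetilde C_u := (C_{u + t_\omega} - C_{t_\omega})(\omega \otimes_{t_\omega} X).
\]
By Lemma~\ref{lem: predi pasting}, \(\widetilde B\) and \(\widetilde C\) are predictable for the right-continuous natural filtration of \(X_{\cdot + t_\omega}\), and by Lemma~\ref{lem: jacod restatements} the \(P(\cdot | \cF_\tau)(\omega)\)-martingale property transfers from \(\F\) to this shifted natural filtration. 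Because \(\{\phi_n\}\) determines the characteristics, we conclude that, under \(P(\cdot | \cF_\tau)(\omega)\), the process \(X_{\cdot + t_\omega}\) is a continuous semimartingale with characteristics \((\widetilde B, \widetilde C)\), which is the asserted statement.

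The main obstacle is the bookkeeping around the countable martingale identities: one must pick \(\{\phi_n\}\) and the localizing sequence so that finitely verified identities really characterize the characteristics, and then handle the shifted stopping times \((\rho_k - t_\omega)^{+}\) with enough care that the candidate \(\widetilde B, \widetilde C\) come out predictable in the \emph{shifted} natural filtration (the role of Lemma~\ref{lem: predi pasting}), so that Lemma~\ref{lem: jacod restatements} and the uniqueness of semimartingale characteristics can be applied to finish.
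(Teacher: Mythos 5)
Your high-level template --- transfer a martingale identity to the conditional measure via Lemma~\ref{lem: stroock varadhan cond mg}, substitute the concatenation using Lemma~\ref{lem: eq concatenation}, shift time, and read off the characteristics --- matches the paper's, but you take a genuinely different route through the test-function martingale problem. The paper works directly with the single local martingale \(M := X - B - X_0\), whose stopped versions \(M^{\tau_n}\) with \(\tau_n = \inf\{t \colon \|M_t\| \geq n\}\) are bounded and hence true martingales; this avoids any discussion of whether a countable family of \(\phi_n\)'s pins down the characteristics. More importantly, the paper obtains the second characteristic essentially for free from the pathwise identity \([X - X^{\tau(\omega)}, X - X^{\tau(\omega)}] = C - C^{\tau(\omega)}\), whereas your route needs to extract \(C\) from the test-function martingales as well, which doubles the bookkeeping. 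Your approach is more robust if one later wants jumps, but for this continuous setting the canonical-decomposition route is leaner.

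Two points in your write-up are genuine gaps. First, you do not address that the localizing sequence \(\rho_k\) must localize under the conditional measure \(P(\cdot\mid\mathcal{F}_\tau)(\omega)\), not just under \(P\). The paper handles this explicitly by observing that \(E^P[P(\liminf_k \rho_k < \infty \mid \mathcal{F}_\tau)] = 0\) and then enlarging \(N\) so that \(P(\rho_k \to \infty \mid \mathcal{F}_\tau)(\omega) = 1\) for \(\omega \notin N\); without this, you cannot pass from martingale properties of the stopped processes to the local martingale property of the unstopped ones. Second, you cite Lemma~\ref{lem: jacod restatements} to descend from \(\mathbf{F}\) to the natural filtration of \(X_{\cdot + t_\omega}\). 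That lemma transfers the (local) martingale property between two filtered spaces related by a map \(\phi\) with \(\phi^{-1}(\mathcal{F}^*_t) = \mathcal{F}'_t\); it is not a tool for shrinking a filtration on a fixed space. What is needed here is the tower rule (Stricker's theorem): once Lemma~\ref{lem: predi pasting} yields adaptedness of the concatenated, shifted process to the smaller filtration, iterated conditional expectations give the martingale property there. The paper cites the tower rule at exactly this step.
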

\begin{proof}
	For notational convenience,  we prove the claim only for \(t^* = 0\).
	Define \(M := X - B - X_0\) and 
	\[\tau_n := \inf \{t \geq 0 \colon \|M_t\| \geq n\}, \quad n > 0.\] The process \(M^{\tau_n}\) is clearly bounded. 
	Thus, by definition of the first characteristic, the process \(M^{\tau_n}\) is a \(P\)-martingale. By Lemma \ref{lem: stroock varadhan cond mg}, there exists a \(P\)-null set \(N\), such that, for all \(\omega \not \in N\), \(M^{\tau_n} - M^{\tau_n \wedge \tau(\omega)}\) is a \(P(\cdot | \mathcal{F}_\tau) (\omega)\)-martingale. As 
	\[
	E^P \Big[ P \Big( \liminf_{n \to \infty} \tau_n < \infty | \mathcal{F}_\tau \Big)\Big] = 0, 
	\]
	possibly making \(N\) a bit larger, we can w.l.o.g. assume that \(P(\tau_n \to \infty | \mathcal{F}_\tau)(\omega) = 1\) for all \(\omega \not \in N\). 
	Now, using Lemmata \ref{lem: countably generated easy observation} and \ref{lem: eq concatenation}, possibly making \(N\) again a bit larger, we get \(P(\cdot | \mathcal{F}_\tau)(\omega)\)-a.s.
	\begin{align*}
	X^{\tau_n} - X^{\tau_n \wedge \tau (\omega)} = (M^{\tau_n}  &- M^{\tau_n \wedge \tau (\omega)})(\omega \otimes_{\tau(\omega)} X) + (B^{\tau_n} - B^{\tau_n \wedge \tau (\omega)} (\omega)) (\omega \otimes_{\tau(\omega)} X).
	\end{align*}
	By the tower rule, \((M^{\tau_n}_{\cdot + \tau(\omega)} - M_{\tau_n \wedge \tau(\omega)})(\omega \otimes_{\tau(\omega)} X)\) is a \(P(\cdot |\mathcal{F}_\tau)(\omega)\)-martingale for the filtration generated by \(X_{\cdot + \tau(\omega)}\).
	Hence, for all \(\omega \not \in N\), by virtue of Lemma \ref{lem: predi pasting}, we deduce that \(X_{\cdot + \tau(\omega)}\) is a \(P (\cdot | \mathcal{F}_\tau) (\omega)\)-semimartingale (for its right-continuous canonical filtration), and further we get the formula for the first characteristic. 
	For the second characteristic, notice that, for all \(\omega \not \in N\), \(P(\cdot | \mathcal{F}_\tau)(\omega)\)-a.s.
	\begin{align*}
	[X - X^{\tau (\omega)}, X - X^{\tau (\omega)}] &= 
	[X, X] - 2 [X, X^{\tau (\omega)}] + [X^{\tau(\omega)}, X^{\tau(\omega)}] 
	\\&= 
	C - C^{\tau(\omega)} 
 \\&= (C - C^{\tau(\omega)}(\omega))(\omega \otimes_{\tau(\omega)} X).
	\end{align*}
	This observation yields the formula for the second characteristic.
\end{proof}
We are in the position to deduce stability under conditioning. The following corollary should be compared to \cite[Corollary 3.2]{neufeld2017nonlinear}.
\begin{corollary} \label{coro: stab cond}
	If \(P \in \cA (t^*, \omega^*)\), then \(P\)-a.s. \(P (\cdot | \mathcal{F}_\tau) \in \cA (\tau, X)\). 
\end{corollary}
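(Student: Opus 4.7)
The plan is to decompose the defining conditions of membership in $\cA(\tau(\omega), \omega)$ into three items that can be verified outside a single $P$-null set: writing $Q_\omega := P(\cdot \mid \mathcal{F}_\tau)(\omega)$, we need (i) $Q_\omega \in \fPas(\tau(\omega))$; (ii) $Q_\omega(X^{\tau(\omega)} = \omega^{\tau(\omega)}) = 1$; and (iii) $(\llambda \otimes Q_\omega)$-a.e.\ the densities of the $Q_\omega$-characteristics of $X_{\cdot + \tau(\omega)}$ lie in $\Theta(\cdot + \tau(\omega), \omega \otimes_{\tau(\omega)} X)$. Property (ii) is already essentially delivered by Lemma \ref{lem: eq concatenation}: outside a $P$-null set $N_1 \in \mathcal{F}_\tau$, $Q_\omega(\omega \otimes_{\tau(\omega)} X = X)=1$, which in particular gives $X^{\tau(\omega)} = \omega^{\tau(\omega)}$ $Q_\omega$-a.s.

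For (i), I would apply Lemma \ref{lem: characteristics under conditioning} to obtain a $P$-null set $N_2$ such that, for $\omega \notin N_2$, $X_{\cdot + \tau(\omega)}$ is a $Q_\omega$-semimartingale with characteristics
\[
\widetilde{B}=(B^P_{\cdot + \tau(\omega)} - B^P_{\tau(\omega)})(\omega \otimes_{\tau(\omega)} X), \quad \widetilde{C}=(C^P_{\cdot + \tau(\omega)} - C^P_{\tau(\omega)})(\omega \otimes_{\tau(\omega)} X).
\]
Absolute continuity is inherited from $P$: writing $\beta := dB^P_{\cdot + t^*}/d\llambda$ and $\gamma := dC^P_{\cdot + t^*}/d\llambda$, the set $A := \{\omega' : t \mapsto B^P_t(\omega'), C^P_t(\omega')\text{ are a.c.\ on }[t^*,\infty)\}$ satisfies $P(A)=1$, so by the standard identity $P = \int Q_\omega\, P(d\omega)$ we have $Q_\omega(A)=1$ for $P$-a.e.\ $\omega$; combined with the $Q_\omega$-a.s.\ identity $\omega \otimes_{\tau(\omega)} X = X$ from Lemma \ref{lem: eq concatenation}, the processes $\widetilde{B}$ and $\widetilde{C}$ are $Q_\omega$-a.s.\ absolutely continuous with densities
\[
\widetilde\beta_s(\omega') = \beta_{s + \tau(\omega) - t^*}(\omega \otimes_{\tau(\omega)} \omega'), \quad \widetilde\gamma_s(\omega') = \gamma_{s + \tau(\omega) - t^*}(\omega \otimes_{\tau(\omega)} \omega').
\]

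For (iii), the hypothesis $P \in \cA(t^*, \omega^*)$ together with $P$-a.s.\ equality $\omega^* \otimes_{t^*} X = X$ (which follows from $P(X^{t^*} = (\omega^*)^{t^*})=1$) yields $(\llambda \otimes P)$-a.e.\ $(\beta_r(\omega'), \gamma_r(\omega')) \in \Theta(r + t^*, \omega')$. Disintegrating $\llambda \otimes P = \int (\llambda \otimes Q_\omega)\, P(d\omega)$ along a countable exhaustion of time intervals shows that for $P$-a.e.\ $\omega$ the same inclusion holds $(\llambda \otimes Q_\omega)$-a.e. A change of variable $s = r - \tau(\omega) + t^*$ (valid since $\tau \geq t^*$) and a further application of the identity $\omega \otimes_{\tau(\omega)} X = X$ (now to replace $\omega'$ by $\omega \otimes_{\tau(\omega)} \omega'$ on a $Q_\omega$-full set) converts this into the inclusion $(\widetilde\beta_s(\omega'), \widetilde\gamma_s(\omega')) \in \Theta(s + \tau(\omega), \omega \otimes_{\tau(\omega)} \omega')$, $(\llambda \otimes Q_\omega)$-a.e., which is precisely (iii). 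Taking $N$ to be the union of $N_1$, $N_2$, and the null sets from the two disintegration steps completes the argument. The main subtlety I anticipate is the careful bookkeeping of null sets and the rigorous change of variable relating the $\cdot + t^*$-indexed densities to the $\cdot + \tau(\omega)$-indexed ones; once the measurability of $\beta_{s + \tau(\omega) - t^*}(\omega \otimes_{\tau(\omega)} \omega')$ in $(s, \omega, \omega')$ is verified, the Fubini-type manipulations are routine.
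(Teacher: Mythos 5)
Your proposal follows essentially the same route as the paper: the same three-condition decomposition (membership in \(\fPas(\tau(\omega))\), the point-mass condition, and the density inclusion), the same reliance on Lemmata \ref{lem: countably generated easy observation}, \ref{lem: eq concatenation} and \ref{lem: characteristics under conditioning}, and the same disintegration of \(\llambda \otimes P\) over the regular conditional probabilities to transfer the \(\Theta\)-inclusion, which the paper phrases as a Fubini/tower-rule computation. The only noteworthy stylistic difference is that the paper indexes the density processes by original time \(t\) throughout rather than by the shifted parameter, which avoids the change-of-variable and measurability bookkeeping you flag as the main subtlety.
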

\begin{proof}
	Let \(N\) be as in Lemma \ref{lem: countably generated easy observation} and take \(\omega \not \in N\). Then, \(P (\tau = \tau (\omega)| \mathcal{F}_\tau)(\omega) = 1\) and
	\[
	P (X^{\tau (\omega)} = \omega^{\tau (\omega)} | \mathcal{F}_\tau) (\omega) = P (X^{\tau} = \omega^{\tau (\omega)} | \mathcal{F}_\tau) (\omega) = \1 \{X^{\tau} = \omega^{\tau (\omega)}\} (\omega) = 1.
	\]
	Thanks to Lemma \ref{lem: characteristics under conditioning}, for \(P\)-a.a. \(\omega \in \Omega\), \(P(\cdot | \mathcal{F}_\tau)  (\omega) \in \fPas (\tau (\omega))\) and the Lebesgue densities of the characteristics of \(X_{\cdot + \tau(\omega)}\) are given by \((b_{\cdot + \tau(\omega)}, a_{\cdot + \tau(\omega)})(\omega \otimes_{\tau(\omega)} X)\), where \((b_{\cdot + t^*}, a_{\cdot + t^*})\) are the Lebesgue densities of the characteristics of \(X_{\cdot + t^*}\) under \(P\). Thus, we get from Lemma \ref{lem: eq concatenation}, Fubini's theorem, Lemma~\ref{lem: countably generated easy observation}, the tower rule, and \(P \in \cA(t^*, \omega^*)\) that 
	\begin{align*}
	\iint_{\tau (\omega)}^\infty E^P \big[ \1 \{ (b_t, a_t) & (\omega \otimes_{\tau (\omega)} X) \not \in \Theta (t, \omega \otimes_{\tau (\omega)} X) \} | \mathcal{F}_\tau \big] (\omega) dt P(d \omega) 
	\\&= \iint_{\tau (\omega)}^\infty E^P \big[ \1 \{ (b_t, a_t) \not \in \Theta (t, X) \} | \mathcal{F}_\tau \big] (\omega) dt P(d \omega) 
	\\&= \int  E^P \Big[ \int_{\tau (\omega)}^\infty \1 \{ (b_t, a_t) \not \in \Theta (t, X) \} dt \big| \mathcal{F}_\tau\Big] (\omega) P(d\omega)
	\\&= \int  E^P \Big[ \int_{\tau}^\infty \1 \{ (b_t, a_t) \not \in \Theta (t, X) \} dt \big| \mathcal{F}_\tau\Big] (\omega) P(d\omega)
	\\&= E^P \Big[ \int_{\tau}^\infty \1 \{ (b_t, a_t) \not \in \Theta (t, X) \} dt \Big] = 0.
	\end{align*}
	This observation completes the proof of \(P\)-a.s. \(P(\cdot | \mathcal{F}_\tau) \in \cA(\tau, X)\). 
\end{proof}


\subsection{Stability under Pasting}
In this section we check stability under pasting. The proof is split into several steps. Throughout this section, we fix \((t^*, \omega^*) \in \of 0, \infty\of\), a probability measure \(P\) on \((\Omega, \mathcal{F})\) such that \(P (X = \omega^* \text{ on } [0, t^*]) = 1\), a stopping time \(\tau\) with \(t^* \leq \tau < \infty\), and an \(\mathcal{F}_\tau\)-measurable map \(\Omega \ni \omega \mapsto Q_\omega \in \mathfrak{P}(\Omega)\) such that 
\(Q_\omega ( \omega = X \text{ on } [0, \tau(\omega)]) = 1\) for \(P\)-a.a. \(\omega \in \Omega\). For simplicity, we further set \(\overline{P} := P \otimes_\tau Q\). 
The following corollary is an immediate consequence of Lemma \ref{lem: eq concatenation}.
\begin{corollary} \label{coro: meas identity}
	\(\overline{P} = E^P \big[ Q (\cdot ) \big]\).
\end{corollary}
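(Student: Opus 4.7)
The plan is to unwind the definition of the pasting measure $\overline{P} = P \otimes_\tau Q$ and apply Lemma \ref{lem: eq concatenation} to identify the concatenation with the plain canonical path, $Q_\omega$-a.s., for $P$-a.a. $\omega$. By definition, for every $A \in \mathcal{F}$,
\[
\overline{P}(A) = \iint \1_A (\omega \otimes_{\tau(\omega)} \omega') \, Q_\omega (d \omega') \, P(d \omega),
\]
while $E^P[Q(A)] = \int Q_\omega(A)\, P(d\omega)$. Thus the two quantities agree once we can replace $\1_A(\omega \otimes_{\tau(\omega)} \omega')$ by $\1_A(\omega')$ inside the double integral.

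The key step is precisely the content of Lemma \ref{lem: eq concatenation}: the standing hypothesis $Q_\omega(\omega = X \text{ on } [0, \tau(\omega)]) = 1$ for $P$-a.a.\ $\omega$ upgrades to $Q_\omega(\omega \otimes_{\tau(\omega)} X = X) = 1$ for $P$-a.a.\ $\omega$. On this full $P$-measure set, the inner integrand $\1_A(\omega \otimes_{\tau(\omega)} \omega')$ coincides with $\1_A(\omega')$ outside a $Q_\omega$-null set, so Fubini together with the fact that a $P$-null exception in the outer variable is immaterial yields
\[
\overline{P}(A) = \iint \1_A(\omega') \, Q_\omega(d\omega') \, P(d\omega) = \int Q_\omega(A) \, P(d\omega) = E^P\bigl[Q(A)\bigr].
\]
There is no serious obstacle here: the statement is a reformulation of Lemma \ref{lem: eq concatenation} in integral form, expressing that pasting along $\tau$ does not alter the post-$\tau$ law because the kernel $Q_\omega$ already lives on paths that agree with $\omega$ on $[0, \tau(\omega)]$.
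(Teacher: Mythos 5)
Your proof is correct and takes exactly the same route as the paper, which simply notes the corollary is an immediate consequence of Lemma \ref{lem: eq concatenation}; you have merely spelled out the one-line reduction from the pasting integral to $E^P[Q(\cdot)]$.
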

\begin{lemma} \label{lem: identiy up to tau}
	\(P = \overline{P}\) on \(\mathcal{F}_\tau\) and  \(\overline{P}\)-a.s. \(\overline{P} (\cdot | \mathcal{F}_\tau) = Q\).
\end{lemma}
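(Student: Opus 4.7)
The plan is to verify both identities directly from the definition of the pasting measure $\overline{P} = P \otimes_\tau Q$, exploiting two structural facts: $\mathcal{F}_\tau$-measurable sets are invariant under the concatenation $(\omega, \omega') \mapsto \omega \otimes_{\tau(\omega)} \omega'$ (since the concatenation coincides with $\omega$ on $[0, \tau(\omega)]$), and $Q_\omega$ is concentrated on paths that agree with $\omega$ up to $\tau(\omega)$, so that inserting $\omega \otimes_{\tau(\omega)} \,\cdot\,$ leaves a $Q_\omega$-integral unchanged.

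For the first identity, I would take $A \in \mathcal{F}_\tau$ and note that $\1_A(\omega \otimes_{\tau(\omega)} \omega') = \1_A(\omega)$ for every $\omega, \omega' \in \Omega$, because $A$ depends only on the path on $[0, \tau]$. Plugging this into the definition
\[
\overline{P}(A) = \iint \1_A(\omega \otimes_{\tau(\omega)} \omega')\, Q_\omega(d\omega')\, P(d\omega)
\]
and using that $Q_\omega(\Omega) = 1$ immediately gives $\overline{P}(A) = P(A)$.

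For the second identity, since $\omega \mapsto Q_\omega$ is $\mathcal{F}_\tau$-measurable by assumption, it suffices to check the defining property of a regular conditional probability: for all $A \in \mathcal{F}$ and $B \in \mathcal{F}_\tau$,
\[
\overline{P}(A \cap B) = \int_B Q_\omega(A)\, \overline{P}(d\omega).
\]
By the first part, the right-hand side equals $\int_B Q_\omega(A)\, P(d\omega)$. For the left-hand side, the same $\mathcal{F}_\tau$-invariance applied to $B$ lets me factor the indicator of $B$ out of the inner integral, yielding
\[
\overline{P}(A \cap B) = \int_B \int \1_A(\omega \otimes_{\tau(\omega)} \omega')\, Q_\omega(d\omega')\, P(d\omega).
\]
Finally, by Lemma \ref{lem: eq concatenation}, there is a $P$-null set $N \in \mathcal{F}_\tau$ such that $Q_\omega(\omega \otimes_{\tau(\omega)} X = X) = 1$ for every $\omega \notin N$, so on $B \setminus N$ the inner integral reduces to $\int \1_A(\omega')\, Q_\omega(d\omega') = Q_\omega(A)$. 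This matches the desired expression.

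I do not anticipate any real obstacle here: the proof is a two-line disintegration argument, and the only nontrivial ingredient, the $Q_\omega$-a.s. identity $\omega \otimes_{\tau(\omega)} X = X$, is precisely what Lemma \ref{lem: eq concatenation} provides. The minor care needed is to handle the $P$-null exceptional sets correctly so that the identity $\overline{P}(\cdot \mid \mathcal{F}_\tau) = Q$ is understood $\overline{P}$-a.s. (equivalently $P$-a.s., by the first part of the lemma).
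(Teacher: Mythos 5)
Your proof is correct and follows essentially the same route as the paper's: both arguments use the $\mathcal{F}_\tau$-invariance of sets under the concatenation $\omega \otimes_{\tau(\omega)}\,\cdot\,$ (Galmarino's test) for the first identity, and combine that invariance with Lemma \ref{lem: eq concatenation} and the first part to obtain the disintegration $\overline{P}(A \cap G) = \int_A Q_\omega(G)\,\overline{P}(d\omega)$. The paper makes explicit the final step of upgrading from ``for each fixed $G$'' to a single $\overline{P}$-null set via countable generation and a monotone class argument, which you absorb into the phrase ``it suffices to check the defining property of a regular conditional probability,'' but this is the same content.
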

\begin{proof}
	Take \(A \in \mathcal{F}_\tau\). By hypothesis, 
	\(
	Q_\omega(\omega \otimes_{\tau(\omega)} X = \omega \textup{ on } [0, \tau(\omega)]) = 1\) for \(P\)-a.a. \(\omega \in \Omega\).
	Hence, Galmarino's test yields that 
	\[
	\overline{P} (A) = \int Q_\omega ( \omega \in A ) P(d \omega) = P(A).
	\]
	This is the first claim. For the second claim, take \(G \in \mathcal{F}\). Then, again by Galmarino's test, Lemma~\ref{lem: eq concatenation} and the first part, 
	\[
	\overline{P} (A \cap G) = \int Q_\omega ( \omega \in A, \omega \otimes_{\tau(\omega)} X \in G) P(d \omega) = \int_A Q_\omega(G) P(d \omega) = E^{\overline{P}} \big[ \1_A Q (G) \big].
	\]
	This yields \(\overline{P}\)-a.s. \(\overline{P}(G | \mathcal{F}_\tau) = Q(G)\). Since \(\mathcal{F}_\tau\) is countably generated, a monotone class argument completes the proof.
\end{proof}

The following two lemmata should be compared to \cite[Proposition 4.2]{neufeld2017nonlinear}.
\begin{lemma} 
	\label{lem_pasting_s}
	Let $P \in \fPs(t^*)$. 
	If \(P\)-a.s. $Q \in \fPs(\tau)$, then $\overline{P} \in \fPs(t^*)$.
\end{lemma}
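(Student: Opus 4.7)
The plan is to show that $X_{\cdot+t^*}$ is a $\overline{P}$-semimartingale for its natural right-continuous filtration by constructing its decomposition as a pasting of the $P$-decomposition (valid up to $\tau$, since $P=\overline{P}$ on $\mathcal{F}_\tau$ by Lemma~\ref{lem: identiy up to tau}) with the $Q_\omega$-decompositions (valid after $\tau$, since $\overline{P}(\cdot \mid \mathcal{F}_\tau) = Q$ and $Q_\omega \in \fPs(\tau(\omega))$ by hypothesis). Concretely, set $Y_s := X_{s+t^*}$, denote by $(\mathcal{G}_s)_{s\ge 0}$ the natural right-continuous filtration of $Y$, and let $\rho := \tau - t^* \geq 0$.

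First, since $P \in \fPs(t^*)$, $Y$ admits under $P$ a continuous semimartingale decomposition $Y = Y_0 + M^P + B^P$ with $M^P$ a continuous $P$-local martingale and $B^P$ a continuous process of finite variation, both $(\mathcal{G}_s)$-adapted. Second, by the measurable graph property for the semimartingale decomposition (cf.\ the results of \cite{neufeld2014measurability} used in the proof of Lemma~\ref{lem: correspondence 2}), we may select jointly measurably in $\omega$ a decomposition $X_{\cdot + \tau(\omega)} = X_{\tau(\omega)} + M^{Q_\omega} + B^{Q_\omega}$ under $Q_\omega$ for $\overline{P}$-a.e.\ $\omega$.

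Next, I would define candidate processes under $\overline{P}$ by
\[
\widetilde{M}_s := M^P_{s\wedge \rho} + \1_{\{s > \rho\}}\,M^{Q}_{s - \rho}(\,\cdot\,), \qquad \widetilde{B}_s := B^P_{s \wedge \rho} + \1_{\{s>\rho\}}\,B^Q_{s-\rho}(\,\cdot\,),
\]
interpreted via the measurable selection above and the pasting structure of $\overline{P} = P \otimes_\tau Q$. The continuity and finite variation of $\widetilde{B}$ are clear. To show $\widetilde{M}$ is a continuous $\overline{P}$-$(\mathcal{G})$-local martingale, I would first localize by the $(\mathcal{G})$-stopping times $\sigma_n := \inf\{s \geq 0 : \|Y_s\| \geq n\}\wedge n$ to reduce to bounded martingale parts, and then verify the martingale identity $E^{\overline{P}}[\widetilde{M}_u \mid \mathcal{G}_s] = \widetilde{M}_s$ for $s \leq u$ by splitting into the cases $\{u \leq \rho\}$, $\{s \leq \rho < u\}$, and $\{\rho < s\}$: the first case follows from the $P$-martingale property of $M^P$ together with $P=\overline{P}$ on $\mathcal{F}_\tau$, the third from the $Q_\omega$-martingale property of $M^{Q_\omega}$ combined with Lemma~\ref{lem: identiy up to tau}, and the middle case from chaining these via the tower property.

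The main obstacle I anticipate is the interplay between the filtrations: $\tau$ is an $(\mathcal{F}_t)$-stopping time, but $(\mathcal{G}_s)$ is typically strictly coarser than $(\mathcal{F}_{s+t^*})$, so $\rho$ need not be a $(\mathcal{G})$-stopping time. A natural workaround is to first establish the semimartingale property of $Y$ under $\overline{P}$ in the enlarged filtration $(\mathcal{F}_{s+t^*})_{s\ge 0}$ (where $\rho$ \emph{is} a stopping time and the above pasting argument runs cleanly), and then descend to $(\mathcal{G}_s)$ via Stricker's theorem. Secondly, the measurable selection of $(M^{Q_\omega}, B^{Q_\omega})$ in $\omega$ needs justification in the spirit of Lemma~\ref{lem: correspondence 2} and \cite{neufeld2014measurability}, and care is needed in reading off the characteristics of $\widetilde{M}, \widetilde{B}$ from the pasting formula $\overline{P} = E^P[Q(\cdot)]$ of Corollary~\ref{coro: meas identity}.
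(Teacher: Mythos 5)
Your route is genuinely different from the paper's. The paper never constructs a decomposition: it invokes the Bichteler--Dellacherie criterion, splits $\int_{t^*}^T(H^n-H)\,dX^{(i)}$ pathwise at $\tau\wedge T$, controls the first piece via $\overline{P}=P$ on $\mathcal{F}_\tau$ together with $P\in\fPs(t^*)$, controls the second via $\overline{P}(\cdot\mid\mathcal{F}_\tau)=Q$, Lemma~\ref{lem: predi pasting} and the hypothesis on $Q$, and finally applies BD once more to conclude. You instead try to build the $\overline{P}$-decomposition of $X_{\cdot+t^*}$ explicitly by pasting the $P$- and $Q_\omega$-decompositions at $\tau$. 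The BD route avoids having to produce or measurably select a decomposition; your route, if it works, has the merit of yielding the decomposition directly, which is closer to what the next lemma ultimately needs.

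There is, however, a genuine gap in the filtration workaround you propose. You want to run the pasting in the enlarged filtration $(\mathcal{F}_{s+t^*}^+)$ (so that $\rho=\tau-t^*$ is a stopping time there) and then descend to the natural right-continuous filtration $(\mathcal{G}_s)$ of $X_{\cdot+t^*}$ by Stricker. The descent is fine, but the ascent is the problem: the hypothesis $P\in\fPs(t^*)$ only gives a decomposition $Y=Y_0+M^P+B^P$ in $(\mathcal{G}_s)$, and $(\mathcal{F}_{s+t^*}^+)$ is an \emph{initial enlargement} of $(\mathcal{G}_s)$ by $\mathcal{F}_{t^*}$. In the generality of the lemma $X^{t^*}$ need not be deterministic under $P$, so $\mathcal{F}_{t^*}$ carries genuinely new information, and the preservation of the local-martingale property of $M^P$ under this enlargement is not automatic and is not addressed by your plan. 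The paper's argument sidesteps this cleanly: $\int_{t^*}^{\tau\wedge T}(H^n-H)\,dX^{(i)}$ is a pathwise Riemann--Stieltjes integral that is dominated by $\sup_{u\in[t^*,T]}\big|\int_{t^*}^{u}(H^n-H)\,dX^{(i)}\big|$, and the latter tends to $0$ in $P$-probability by BD applied purely within $(\mathcal{G}_s)$, so that $\1_{[t^*,\tau]}$ never needs to be predictable for any filtration. Two smaller points: the joint measurable selection $\omega\mapsto(M^{Q_\omega},B^{Q_\omega})$ with the shift origin $\tau(\omega)$ varying needs more than a direct citation of \cite{neufeld2014measurability}, which works from a fixed origin (combining it with arguments as in Lemmata~\ref{lem: set identity} and~\ref{lem: correspondence 2} is plausible but has to be spelled out); and localizing $\widetilde M$ by $\sigma_n=\inf\{s:\|Y_s\|\ge n\}\wedge n$ does not bound $\widetilde M$, since $\widetilde B$ may be unbounded on $[0,\sigma_n]$, so you would want $\inf\{s:\|\widetilde M_s\|\ge n\}$ instead.
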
 

\begin{proof}
	Let \(T > t^*\), and take a sequence \((H^n)_{n \in \mathbb{Z}_+}\) of simple predictable processes on \( \of t^*, T \gs \) such that \(H^n \to H^0\) uniformly in time and \(\omega\). Then, by Lemma \ref{lem: identiy up to tau}, we get, for every \(i = 1, 2, \dots, \d\),
	\begin{align*}
	E^{\overline{P}} \Big[ \Big| \int_{t^*}^T(H^n_s &- H_s) d X^{(i)}_s\Big| \wedge 1 \Big] 
	\\&\leq E^{P} \Big[ \Big| \int_{t^*}^{\tau \wedge T} (H^n_s - H_s) d X^{(i)}_s \Big| \wedge 1 \Big] \\&\hspace{1.5cm} + \int E^{Q_\omega} \Big[ \Big|\int_{\tau (\omega) \wedge T}^T (H^n_s - H_s) (\omega \otimes_{\tau (\omega)} X) d X^{(i)}_s\Big| \wedge 1 \Big] P(d \omega).
	\end{align*}
	The first term converges to zero since \(P \in \fPs (t^*)\) and thanks to the Bichteler--Dellacherie (BD) Theorem (\cite[Theorem III.43]{protter}). The second term converges to zero by dominated convergence, the assumption that \(P\)-a.s. \(Q \in \fPs (\tau)\) and, by virtue of Lemma \ref{lem: predi pasting}, again the BD Theorem. Consequently, invoking the BD Theorem a third time yields that \(\overline{P} \in \fPs (t^*)\).
\end{proof}

\begin{lemma}
	\label{lem_pasting_as}
	Let $P \in \fPas(t^*)$.
	If \(P\)-a.s. $Q \in \fPas(\tau)$, then $\overline{P} \in \fPas(t^*)$.
\end{lemma}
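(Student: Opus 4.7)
The plan is to build on the previous result. From Lemma~\ref{lem_pasting_s} we already know $\overline{P} \in \fPs(t^*)$, so the $\overline{P}$-characteristics $(B^{\overline{P}}_{\cdot + t^*}, C^{\overline{P}}_{\cdot + t^*})$ of $X_{\cdot + t^*}$ are well-defined, and only absolute continuity is left to check. The natural strategy is to decompose each characteristic at the stopping time $\tau$ into a before-$\tau$ stopped piece and an after-$\tau$ piece, and treat the two contributions separately, using Lemma~\ref{lem: identiy up to tau} on the first and Lemma~\ref{lem: characteristics under conditioning} on the second.

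For the stopped piece, Lemma~\ref{lem: identiy up to tau} gives $\overline{P} = P$ on $\mathcal{F}_\tau$, so the $\mathcal{F}_\tau$-measurable stopped semimartingale $X^{\tau}$ has the same law under $P$ and $\overline{P}$. By the uniqueness (up to evanescence) of the canonical semimartingale decomposition, the characteristics of $X_{\cdot + t^*}$ stopped at $\tau - t^*$ agree $\overline{P}$-a.s.\ with those of $X_{\cdot + t^*}$ under $P$ stopped at the same time. Since $P \in \fPas(t^*)$, this stopped piece is $\overline{P}$-a.s.\ absolutely continuous with respect to~$\llambda$.

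For the after-$\tau$ piece, I would apply Lemma~\ref{lem: characteristics under conditioning} with $\overline{P} \in \fPs(t^*)$ and the stopping time $\tau$: for $\overline{P}$-a.a.\ $\omega$, the shifted process $X_{\cdot + \tau(\omega)}$ is a semimartingale under $\overline{P}(\cdot|\mathcal{F}_\tau)(\omega)$ with characteristics
\[
\big(B^{\overline{P}}_{\cdot + \tau(\omega)} - B^{\overline{P}}_{\tau(\omega)}\big)(\omega \otimes_{\tau(\omega)} X), \qquad \big(C^{\overline{P}}_{\cdot + \tau(\omega)} - C^{\overline{P}}_{\tau(\omega)}\big)(\omega \otimes_{\tau(\omega)} X).
\]
By Lemma~\ref{lem: identiy up to tau}, $\overline{P}(\cdot|\mathcal{F}_\tau) = Q$ almost surely, and the hypothesis gives $Q_\omega \in \fPas(\tau(\omega))$ for $P$-a.a.\ (hence $\overline{P}$-a.a.) $\omega$. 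Uniqueness of semimartingale characteristics under a fixed measure therefore forces these expressions to coincide with the $Q_\omega$-characteristics of $X_{\cdot + \tau(\omega)}$, which are absolutely continuous $Q_\omega$-a.s. Using Lemma~\ref{lem: eq concatenation} to identify $\omega \otimes_{\tau(\omega)} X = X$ $Q_\omega$-almost surely, and then invoking Corollary~\ref{coro: meas identity} to lift the property back from the kernel $Q$ to $\overline{P}$, yields $\overline{P}$-a.s.\ absolute continuity of the post-$\tau$ piece. Combining both pieces then gives $\overline{P} \in \fPas(t^*)$.

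I expect the main obstacle to be bookkeeping rather than deep content: one must carefully chain together the identifications $\overline{P}=P$ on $\mathcal{F}_\tau$, $\overline{P}(\cdot|\mathcal{F}_\tau)=Q$, the characteristics-under-conditioning formula, and the concatenation identity $\omega\otimes_{\tau(\omega)} X = X$ under $Q_\omega$, while keeping track of null sets and of the fact that the stopped characteristic is $\mathcal{F}_\tau$-measurable (so its absolute continuity transfers between $P$ and $\overline{P}$). No new measure-theoretic input beyond the lemmata already established should be needed.
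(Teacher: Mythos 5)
Your proposal is correct and follows essentially the same route as the paper's proof: invoke Lemma~\ref{lem_pasting_s} to get $\overline{P}\in\fPs(t^*)$, split the characteristic at $\tau$, handle the stopped piece via $\overline{P}=P$ on $\mathcal{F}_\tau$, and handle the post-$\tau$ piece via Lemma~\ref{lem: characteristics under conditioning}, the identity $\overline{P}(\cdot|\mathcal{F}_\tau)=Q$, and Corollary~\ref{coro: meas identity}. The paper phrases the final step slightly more explicitly by writing the Lebesgue decomposition $B_{\cdot+t^*}=\int_{t^*}^\cdot\phi_s\,ds+\psi$ and showing the singular part vanishes after $\tau$ by uniqueness, but this is the same argument you sketch.
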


\begin{proof}
	By Lemma \ref{lem_pasting_s}, we already know that $\overline{P} \in \fPs(t^*)$.
	Denote by $B_{\cdot + t^*}$ the first characteristic of $X_{\cdot + t^*}$ under $\overline{P}$, and let
	$$ B_{\cdot + t^*} = \int_{t^*}^\cdot \phi_s ds + \psi $$
	be the Lebesgue decomposition of (the paths of) $B$.
	Since \(P \in \fPas (t^*)\) and $\overline{P} = P$ on $\cF_\tau$ by Lemma~\ref{lem: identiy up to tau}, 
	simply by the definition of the first characteristic, we get that \(B \ll \llambda\) on \(\of 0, \tau\gs\).
	Hence, by virtue of Corollary~\ref{coro: meas identity}, it suffices to show that
	$$ D := \Big\{B_{\cdot + \tau} - B_{\tau} \neq 
	\int_{\tau}^{\cdot + \tau} \phi_s ds\Big
	\}
	$$ is a \(Q_\omega\)-null set for \(\overline{P}\)-a.a. \(\omega \in \Omega\).
	Due to Lemmata \ref{lem: characteristics under conditioning} and \ref{lem: identiy up to tau}, for \(\overline{P}\)-a.a. \(\omega \in \Omega\), we have $Q_\omega \in \fPs (\tau (\omega))$ and the \(Q_\omega\)-characteristics of the shifted process \(X_{\cdot + \tau(\omega)}\) are given by \((B_{\cdot + \tau(\omega)} - B_{\tau(\omega)}, C_{\cdot + \tau(\omega)} - C_{\tau (\omega)}) (\omega \otimes_{\tau (\omega)} X)\).
	As \(P\)-a.s. \(Q \in \fPas (\tau)\), Lemma~\ref{lem: identiy up to tau} and the
	uniqueness of the Lebesgue decomposition yield that for \(\overline{P}\)-a.a. \(\omega \in \Omega\)
	$$ Q_\omega \Big( B_{\cdot + \tau (\omega)} - B_{\tau (\omega)} \neq 
	\int_{\tau (\omega)}^{\cdot + \tau (\omega)} \phi_s ds
	\Big) = 0.
	$$
	As 
	\(Q_\omega (\tau = \tau(\omega)) = 1\) for \(\overline{P}\)-a.a. \(\omega \in \Omega\),
	\(\overline{P}\)-a.s. \(Q(D) = 0\). 
	One may proceed similarly for the other characteristic.	
\end{proof}

\begin{lemma} \label{lem: stab pasting}
	Let $P \in \cA(t^*,\omega^*)$.
	If $Q_\omega \in \cA(\tau(\omega), \omega)$ for $P$-a.e. $\omega \in \Omega$, then
	$\overline{P} \in \cA(t^*,\omega^*)$.
\end{lemma}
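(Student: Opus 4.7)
The plan is to verify the three defining conditions of $\cA(t^*, \omega^*)$ for $\overline{P} := P \otimes_\tau Q$: namely, (i) $\overline{P} \in \fPas(t^*)$, (ii) $\overline{P}(X^{t^*} = (\omega^*)^{t^*}) = 1$, and (iii) $(\llambda \otimes \overline{P})$-a.e.\ $(dB^{\overline{P}}_{\cdot + t^*}/d\llambda, dC^{\overline{P}}_{\cdot + t^*}/d\llambda) \in \Theta(\cdot + t^*, \omega^* \otimes_{t^*} X)$. Conditions (i) and (ii) are essentially free: (i) is exactly Lemma \ref{lem_pasting_as} (applied with $P \in \fPas(t^*)$ and $P$-a.s.\ $Q \in \fPas(\tau)$, both of which are automatic from the hypotheses), while (ii) follows because $\{X^{t^*} = (\omega^*)^{t^*}\} \in \mathcal{F}_{t^*} \subseteq \mathcal{F}_\tau$ and $\overline{P} = P$ on $\mathcal{F}_\tau$ by Lemma \ref{lem: identiy up to tau}.

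The substance of the proof lies in (iii), which I would split across the stochastic intervals $\of t^*, \tau \gs$ and $\of \tau, \infty \gs$. On $\of t^*, \tau \gs$: since $P = \overline{P}$ on $\mathcal{F}_\tau$ and the stopped process $(X_{\cdot + t^*})^{\tau - t^*}$ is $\mathcal{F}_\tau$-measurable (as $\tau - t^* \geq 0$), it has the same law under both measures; by uniqueness of semimartingale characteristics, the associated densities agree $(\llambda \otimes P)$-a.e.\ (equivalently $(\llambda \otimes \overline{P})$-a.e.) on this interval. The required inclusion then transfers directly from $P \in \cA(t^*, \omega^*)$, after replacing $\omega^* \otimes_{t^*} X$ with $X$ on the $P$-full measure set $\{X^{t^*} = (\omega^*)^{t^*}\}$.

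On $\of \tau, \infty \gs$: by Lemma \ref{lem: identiy up to tau}, $\overline{P}(\cdot | \mathcal{F}_\tau) = Q$ $\overline{P}$-a.s., so Lemma \ref{lem: characteristics under conditioning} applied to $\overline{P}$ yields that, for $\overline{P}$-a.a.\ $\omega$, the $Q_\omega$-characteristics of $X_{\cdot + \tau(\omega)}$ are $(B^{\overline{P}}_{\cdot + \tau(\omega)} - B^{\overline{P}}_{\tau(\omega)})(\omega \otimes_{\tau(\omega)} X)$ and the analogous expression for $C$. Combining this representation with the inclusion coming from $Q_\omega \in \cA(\tau(\omega), \omega)$ (valid for $P$-a.a., hence $\overline{P}$-a.a., $\omega$), with the $Q_\omega$-a.s.\ identity $\omega \otimes_{\tau(\omega)} X = X$ from Lemma \ref{lem: eq concatenation}, and with the uniqueness of Lebesgue densities, we conclude that $(b^{\overline{P}}_s, a^{\overline{P}}_s) \in \Theta(s, X)$ for $(\llambda \otimes \overline{P})$-a.a.\ pairs $(s, \omega)$ with $s > \tau(\omega)$. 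A Fubini argument in the spirit of Corollary \ref{coro: stab cond}, using Corollary \ref{coro: meas identity}, integrates the $Q_\omega$-a.s.\ statements to the required $\overline{P}$-a.s.\ statement, and (ii) replaces $\Theta(\cdot, X)$ by $\Theta(\cdot, \omega^* \otimes_{t^*} X)$.

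The main obstacle is the bookkeeping on $\of \tau, \infty \gs$: one must reconcile two distinct parametrizations of the characteristics (those of $X_{\cdot + \tau(\omega)}$ under $Q_\omega$ versus those of $X_{\cdot + t^*}$ under $\overline{P}$), match their Lebesgue densities using the $\omega$-dependent shift $\tau(\omega)$ vs.\ the deterministic shift $t^*$, and then perform the Fubini step so that the resulting $(\llambda \otimes \overline{P})$-a.e.\ statement is precisely the one demanded by the definition of $\cA(t^*, \omega^*)$.
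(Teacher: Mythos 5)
Your proposal follows essentially the paper's own argument: $\overline{P} \in \fPas(t^*)$ via Lemma \ref{lem_pasting_as}, the initial-path condition via $\overline{P}=P$ on $\mathcal{F}_\tau$ (Lemma \ref{lem: identiy up to tau}), and the density inclusion on $\of\tau,\infty\of$ via Lemmata \ref{lem: characteristics under conditioning}, \ref{lem: identiy up to tau}, Corollary \ref{coro: meas identity} and Fubini. The only cosmetic difference is that you spell out the $\of t^*,\tau\gs$ piece explicitly, whereas the paper dismisses it in one line because $\overline{P}=P$ on $\mathcal{F}_\tau$.
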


\begin{proof}
	Lemma \ref{lem_pasting_as} implies $\overline{P} \in \fPas(t^*)$. Clearly, \(\{X = \omega^* \text{ on } [0, t^*]\} \in \mathcal{F}_\tau\) as \(\tau \geq t^*\). Thus, Lemma \ref{lem: identiy up to tau} yields \(\overline{P} ( X = \omega^* \text{ on } [0, t^*] ) = P ( X = \omega^* \text{ on } [0, t^*] ) = 1.\)
	Denote by $(b_{\cdot + t^*},a_{\cdot + t^*})$ the Lebesgue densities of the characteristics
	of $X_{\cdot + t^*}$ under $\overline{P}$.
	Since $\overline{P} = P$ on $\cF_\tau$, it suffices to show that
	$$ R := \big\{ (t,\omega) \in \of\tau, \infty \of \hspace{0.05cm} \colon (b_t(\omega), a_t(\omega)) \notin \Theta(t,\omega)   \big\} $$
	is a $(\llambda \otimes \overline{P})$-null set.
	By virtue of Lemmata \ref{lem: characteristics under conditioning} and \ref{lem: identiy up to tau}, the assumption that \(Q_\omega \in \cA (\tau(\omega), \omega)\) for \(P\)-a.a. \(\omega \in \Omega\) yields that \(P\)-a.s.
	$$ (\llambda \otimes Q)\big((t,\omega') \in \of\tau,  \infty \of \hspace{0.05cm} \colon (b_t(\omega'),a_t(\omega')) \notin \Theta(t,\omega')\big) = 0.
	$$
	Finally, as \(Q_\omega (\tau = \tau (\omega)) = 1\) for \(P\)-a.a. \(\omega \in \Omega\), Corollary \ref{coro: meas identity} and Fubini's theorem yield that
	\begin{align*}
	(\llambda \otimes \overline{P}) (R) &= E^{\overline{P}} \Big[ \int_{\tau}^{\infty} \1 \{ (b_t, a_t) \not \in \Theta (t, X) \} d t \Big] 
	\\&= E^P \Big[ E^Q \Big[ \int_{\tau}^{\infty} \1 \{ (b_t, a_t) \not \in \Theta (t, X) \} d t \Big] \Big]
	\\&= E^P \Big[ \int_{\tau}^{\infty} Q\big( (b_t, a_t) \not \in \Theta (t, X) \big) dt \Big] = 0.
	\end{align*}
	This completes the proof.
\end{proof}
\subsection{Proof of Theorem \ref{theo: DPP}} \label{sec: pf DPD}
Lemma \ref{lem: correspondence 2}, Corollary \ref{coro: stab cond} and Lemma \ref{lem: stab pasting} show that the prerequisites of \cite[Theorem 2.1]{ElKa15} are fulfilled and this theorem then directly implies the DPP. \qed


\section{A nonlinear  Kolmogorov equation: Proof of Theorem \ref{theo: viscosity}} \label{sec: pf vis}

In this section we prove that the value function is a weak sense viscosity solution to the PPDE \eqref{eq: PIDE} and we discuss some of its regularity properties.
By its very definition, regularity of the value function is closely linked to the regularity of the correspondence \( (t,\omega) \mapsto \cA(t,\omega) \).
Due to the appearance of \(\fPas (t)\) and \((dB^P_{\cdot + t}/d \llambda, dC^P_{\cdot + t} /d \llambda)\) in the set \( \cA(t,\omega) \), certain regularity properties of \((t, \omega) \mapsto \cA(t, \omega)\) seem at first glance to be difficult to verify. To get a more convenient condition, in Section \ref{subsec: preparations}, we show that 
	\[
	v(t,\omega) = \sup_{P \in \cA(t, \omega)} E^{P}\big[\psi\big]  = \sup_{P \in \mathcal{R}(t, \omega)} E^{P}\big[\psi(\omega \ \widetilde{\otimes}_t \ X) \big].
	\]
	This reformulation of the value function \(v\) explains that it suffices to investigate the correspondence \((t,\omega) \mapsto \mathcal{R}(t,\omega)\) from \eqref{eq: def R}. Thereby, we shift the main \((t, \omega)\) dependence to the explicitly given correspondence~\(\Theta\), as the remaining parts from \(\cR (t, \omega)\) only depend on \(\fPas = \fPas (0)\) and \((d B^P /d \llambda, d C^P / d \llambda)\).
	In the Sections \ref{sec: pf subsolution} and \ref{sec: pf supersolution}, we show that \(v\) is a viscosity sub- and supersolution to \eqref{eq: PIDE}. The ideas of proof are based on applications of Berge's maximum theorem, Skorokhod's existence theorem for stochastic differential equations and Lebesgue's differentiation theorem. In contrast to the proofs from \cite{fadina2019affine, neufeld2017nonlinear} for the viscosity subsolution property in L\'evy and continuous affine frameworks respectively, we do not work with explicit moment estimates. This allows us to extend the class of test functions to \(C^{1, 2}\) in comparison to the class \(C^{2, 3}\) as used in \cite{fadina2019affine, neufeld2017nonlinear}. Further, this extension also enables us to apply the uniqueness result from \cite{zhou} to deduce Theorem \ref{thm: uniqueness}.
	Given the above mentioned results, the proof of Theorem \ref{theo: viscosity} is finalized in Section~\ref{sec: pf viscosity}.

\subsection{Some preparations} \label{subsec: preparations}
To execute the program outlined above, we require more notation.
First, for \(t \in \mathbb{R}_+\), we define another shift operator \(\gamma_t \colon \Omega \to \Omega\) by \(\gamma_t (\omega) := \omega((\cdot - t)^+)\) for all \(\omega \in \Omega\). For \( P \in \mathfrak{P}(\Omega) \), we denote
\(P^t := P  \circ \gamma_t^{-1} \).
Moreover, for \( (t, \omega) \in \of 0, \infty \of \), we write \( \xi_{t, \omega}(\omega') := \omega \otimes_t \omega' \)
and define
\begin{align*}
\mathcal{Q}(t,\omega) := \Big\{ P \in \fPas(t) \colon P &\circ X_t^{-1} = \delta_{\omega (t)},  \\&(\llambda \otimes P)\text{-a.e. } (dB^P_{\cdot + t} /d\llambda, dC^P_{\cdot + t}/d\llambda) \in \Theta (\cdot + t, \omega \otimes_t X)  \Big\}.
\end{align*}
Further, recall the definition of \(\mathcal{R}\) as given in \eqref{eq: def R}.

\begin{lemma} \label{lem: connection Q and R}
	The equality
	\( \mathcal{R}(t, \omega) = \{ P_t \colon P \in \mathcal{Q}(t,\omega) \} \)
	holds for every \( (t,\omega) \in \of 0, \infty \of \).
\end{lemma}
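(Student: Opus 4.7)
My plan is to use the shift $\theta_t$ as an essentially invertible change of variables between $\mathcal{Q}(t,\omega)$ and $\mathcal{R}(t,\omega)$. A one-sided inverse of $\theta_t$ on the set $\{\tilde\alpha\in\Omega : \tilde\alpha(0)=\omega(t)\}$ is the map $\tilde\alpha\mapsto\omega\,\widetilde\otimes_t\,\tilde\alpha$, since $\theta_t(\omega\,\widetilde\otimes_t\,\tilde\alpha)(s) = \omega(t)+\tilde\alpha(s)-\tilde\alpha(0) = \tilde\alpha(s)$ whenever $\tilde\alpha(0)=\omega(t)$. The forward direction is then to verify $P_t\in\mathcal{R}(t,\omega)$ for every $P\in\mathcal{Q}(t,\omega)$, and the backward direction is to check that pushing $\tilde P\in\mathcal{R}(t,\omega)$ forward under $\tilde\alpha\mapsto\omega\,\widetilde\otimes_t\,\tilde\alpha$ yields a measure in $\mathcal{Q}(t,\omega)$ whose image under $\theta_t$ is $\tilde P$.

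First, I would transfer the semimartingale property via Lemma~\ref{lem: jacod restatements}. Taking $\phi=\theta_t$ with $\Omega^*=\Omega'=\Omega$, $X^*=X$, and hence $X'=X^*\circ\phi=X_{\cdot+t}$, the filtration hypothesis $\theta_t^{-1}(\mathcal{F}_{s+})$ equals the right-continuous natural filtration of $X_{\cdot+t}$ at time $s$, which is the content of Lemma~\ref{lem: filtraation shift} applied to $Y=X_{\cdot+t}=\theta_t$. Thus $X$ is a semimartingale under $P_t$ (for its right-continuous natural filtration) iff $X_{\cdot+t}$ is one under $P$, so $P_t\in\fPs\iff P\in\fPs(t)$, with characteristics transferring as
\[
B^{P_t}\circ\theta_t = B^P_{\cdot+t},\qquad C^{P_t}\circ\theta_t = C^P_{\cdot+t}.
\]
Reading this fibrewise in $\alpha$ and differentiating in time, one obtains both the equivalence of absolute continuity and the density identification
\[
(dB^{P_t}/d\llambda)(s,\theta_t\alpha) = (dB^P_{\cdot+t}/d\llambda)(s,\alpha)
\]
(and analogously for $C$). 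In particular $P_t\in\fPas\iff P\in\fPas(t)$. The initial condition matches immediately, since $X_0\circ\theta_t=X_t$ gives $P(X_t=\omega(t))=1\iff P_t(X_0=\omega(t))=1$.

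Next, the key point-path identity. I would verify that for every $\alpha\in\Omega$ with $\alpha(t)=\omega(t)$,
\[
\omega\otimes_t\alpha = \omega\,\widetilde\otimes_t\,(\theta_t\alpha),
\]
by splitting into $u<t$ (where both sides equal $\omega(u)$) and $u\ge t$ (where both equal $\omega(t)+\alpha(u)-\alpha(t)$, using $\alpha(t)=\omega(t)$). Since $P\in\mathcal{Q}(t,\omega)$ forces $P$-a.s.\ $\alpha(t)=\omega(t)$ and $P_t\in\mathcal{R}(t,\omega)$ forces $P_t$-a.s.\ $\tilde\alpha(0)=\omega(t)$, this identity, combined with the density identification from the previous step, shows that the $\Theta$-inclusion defining $\mathcal{Q}(t,\omega)$ and the one defining $\mathcal{R}(t,\omega)$ pull back to each other under $\theta_t$ after removing these null sets. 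A change of variables under $\llambda\otimes P$ and $\llambda\otimes P_t$ then yields $\{P_t : P\in\mathcal{Q}(t,\omega)\}\subseteq\mathcal{R}(t,\omega)$.

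For the reverse inclusion, given $\tilde P\in\mathcal{R}(t,\omega)$, I would set $P:=\tilde P\circ(\omega\,\widetilde\otimes_t\,\cdot)^{-1}$. Direct inspection gives $P(X^t=\omega^t)=1$, and the one-sided inverse property of $\omega\,\widetilde\otimes_t\,\cdot$ on the $\tilde P$-full set $\{\tilde\alpha(0)=\omega(t)\}$ yields $P_t=P\circ\theta_t^{-1}=\tilde P$. Reversing the implications of the previous two steps then shows $P\in\mathcal{Q}(t,\omega)$, completing the proof. The main obstacle is the careful bookkeeping of paths, characteristics, filtrations, and their densities through the two shifts $\theta_t$ and $\omega\,\widetilde\otimes_t\,\cdot$; the analytic content is entirely provided by Lemmata~\ref{lem: filtraation shift} and~\ref{lem: jacod restatements}.
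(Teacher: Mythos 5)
Your proof is correct and takes essentially the same route as the paper: both directions are driven by Lemmata \ref{lem: filtraation shift} and \ref{lem: jacod restatements}, which transfer the semimartingale structure and characteristics through the shift $\theta_t$, and the forward inclusion is exactly the content of Lemma \ref{lem: set identity}. The only cosmetic difference is in the reverse inclusion, where you use the right inverse $\tilde\alpha\mapsto\omega\,\widetilde\otimes_t\,\tilde\alpha$ of $\theta_t$, whereas the paper instead works with $P^t=P\circ\gamma_t^{-1}$ for the shift $\gamma_t(\omega')=\omega'((\cdot-t)^+)$, which also satisfies $\theta_t\circ\gamma_t=\mathrm{id}$ and the unconditional path identity $(\omega\otimes_t X)\circ\gamma_t=\omega\,\widetilde\otimes_t\,X$; both constructions produce a preimage in $\mathcal{Q}(t,\omega)$.
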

\begin{proof}
	Lemma \ref{lem: set identity} shows the inclusion
	\( \{ P_t \colon P \in \mathcal{Q}(t,\omega) \} \subset \mathcal{R}(t, \omega) \), as \( P_t \circ X_0^{-1} = P \circ X_t^{-1} = \delta_{\omega(t)} \) for every \(P \in \cQ(t, \omega)\).
	Conversely, given \( P \in \mathcal{R}(t, \omega) \), the measure
	\(P^t \) is contained in \( \mathcal{Q}(t,\omega) \).
	To see this, note first that \(P^t \circ X_t^{-1} = P \circ X_0^{-1} = \delta_{\omega (t)}\). 
	Second, \( P^t \in \fPas(t) \) since  \( (P^t)_t = P \in \fPas \) as \(\th_t \circ \gamma_t = \on{id} \).
	Finally, Lemma \ref{lem: jacod restatements}
	together with \( (\omega \otimes_t X ) \circ \gamma_t = \omega \ \widetilde{\otimes}_t \ X \) implies that \( (\llambda \otimes P^t)\text{-a.e.} \)
	\(
	(dB^{P^t}_{\cdot + t} /d\llambda, dC^{P^t}_{\cdot + t}/d\llambda) \in \Theta (\cdot + t, \omega \otimes_t X).
	\) 
\end{proof}

\begin{lemma} \label{lem: connection Q and C}
	The equality
	\( \mathcal{C}(t, \omega) = \{  P \circ 
	\xi_{t,\omega}^{-1} \colon P \in \mathcal{Q}(t,\omega) \} \)
	holds for every \( (t,\omega) \in \of 0, \infty \of \).    
\end{lemma}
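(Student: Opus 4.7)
The plan is to prove the two inclusions separately, using Lemmas \ref{lem: set identity} and \ref{lem: connection Q and R} to handle the semimartingale and graph conditions, while keeping the remaining work at the level of pointwise manipulations of the concatenation \(\xi_{t,\omega}\) and the shift \(\theta_t\).

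For the inclusion $\supset$, I would fix $P \in \mathcal{Q}(t,\omega)$ and set $\tilde P := P \circ \xi_{t,\omega}^{-1}$. First, a direct pointwise inspection shows $\xi_{t,\omega}(\omega')^t = \omega^t$ for every $\omega' \in \Omega$ (on $[0, t)$ this is immediate, and at $t$ one has $\omega(t) + \omega'(t) - \omega'(t) = \omega(t)$). Hence $\tilde P(X^t = \omega^t) = 1$. For the semimartingale-after-$t$ property together with the \(\Theta\)-graph condition, I would invoke Lemma \ref{lem: set identity}, which translates both into corresponding properties of the shifted measure $\tilde P_t := \tilde P \circ \theta_t^{-1}$. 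A straightforward computation yields $(\theta_t \circ \xi_{t,\omega})(\omega') = \omega(t) + \omega'(\cdot + t) - \omega'(t)$ pointwise; using $X_t = \omega(t)$ \(P\)-a.s.\ (from \(P \in \mathcal{Q}(t,\omega)\)), this map agrees $P$-a.s.\ with $\theta_t$, so $\tilde P_t = P \circ (\theta_t \circ \xi_{t,\omega})^{-1} = P \circ \theta_t^{-1} = P_t$. By Lemma \ref{lem: connection Q and R}, $P_t \in \mathcal{R}(t, \omega)$, so $\tilde P_t$ lies in $\fPas$ and satisfies the $\widetilde\otimes_t$-form of the graph condition; applying Lemma \ref{lem: set identity} in reverse, this is exactly $\tilde P \in \fPas(t)$ with the $\otimes_t$-form, so $\tilde P \in \mathcal{C}(t,\omega)$.

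For the inclusion $\subset$, I would take $P' \in \mathcal{C}(t,\omega)$. Since $P'(X^t = \omega^t) = 1$ implies $P' \circ X_t^{-1} = \delta_{\omega(t)}$, the remaining two conditions are the same as for $\mathcal{Q}(t,\omega)$, so $P' \in \mathcal{Q}(t,\omega)$. Moreover, $P'$-almost surely $X = \omega$ on $[0, t]$, and therefore also \(\xi_{t,\omega}(X) = \omega \otimes_t X = X\) pointwise on $[0, t)$ and on $[t, \infty)$ as $\omega(t) + X - X_t = X$ $P'$-a.s. Consequently $P' = P' \circ \xi_{t,\omega}^{-1}$, which realises $P'$ as the image of an element of $\mathcal{Q}(t,\omega)$.

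The only delicate point is the identity $\tilde P_t = P_t$ in the forward inclusion, which relies on exchanging the pointwise formula for $\theta_t \circ \xi_{t,\omega}$ against a $P$-almost sure identity using $X_t = \omega(t)$; once this is established everything else is a reduction to the previous two lemmas, so I do not anticipate any substantive obstacle beyond careful bookkeeping.
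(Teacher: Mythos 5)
Your proof is correct. It is worth comparing the two routes. The paper's argument is an invariance argument: first, for any \(P \in \mathfrak{P}(\Omega)\), the path constraint \(P(X^t = \omega^t) = 1\) is equivalent to the fixed-point condition \(P = P \circ \xi_{t,\omega}^{-1}\); second, using Lemma~\ref{lem: jacod restatements} together with the idempotence \(\xi_{t,\omega}\circ\xi_{t,\omega}=\xi_{t,\omega}\), one shows \(P \in \cQ(t,\omega) \Leftrightarrow P\circ\xi_{t,\omega}^{-1}\in\cQ(t,\omega)\). Combining these two equivalences immediately gives both inclusions. Your argument is equally valid but takes a slightly longer route for the forward inclusion: instead of invoking Lemma~\ref{lem: jacod restatements} directly, you compute \(\theta_t\circ\xi_{t,\omega}\) explicitly, observe it coincides with \(\theta_t\) on the \(P\)-full event \(\{X_t = \omega(t)\}\), conclude \(\tilde P_t = P_t\), and then chase through Lemmas~\ref{lem: connection Q and R} and \ref{lem: set identity} to translate back. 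This is a valid reduction (both auxiliary lemmas precede this one in the paper), and the observation that \(\theta_t\circ\xi_{t,\omega}\) is \(P\)-a.s.\ equal to \(\theta_t\) is a tidy way to sidestep direct manipulations of semimartingale characteristics; the price is one extra hop through \(\cR(t,\omega)\). Your backward inclusion, via \(P' = P'\circ\xi_{t,\omega}^{-1}\) and the trivial containment \(\cC\subset\cQ\), is essentially identical to the paper's.
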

\begin{proof}
	For any measure \( P \in \mathfrak{P}(\Omega) \), and every \( (t, \omega) \in \of 0, \infty \of \), we have
	\( P( X = \omega \text{ on } [0,t]) = 1 \) if and only if 
	\( P = P \circ \xi_{t,\omega}^{-1} \).  
	Note that the canonical process \(X \) is a semimartingale after time \(t\) if 
	and only if the process \( \omega \otimes_t X \) is a semimartingale after time \(t \). Thus, Lemma~\ref{lem: jacod restatements} implies, together with the identity \( \xi_{t,\omega} \circ \xi_{t,\omega} = \xi_{t,\omega} \),
	that \( P \in \mathcal{Q}(t, \omega) \) if and only if \( P \circ \xi_{t,\omega}^{-1} \in \mathcal{Q}(t, \omega) \).
	This completes the proof.
\end{proof}

Summarizing the above, we may conclude the following corollary, which provides, for our framework, the connection between the approaches in \cite{ElKa15} and \cite{NVH} for the construction of nonlinear expectations on path spaces.

\begin{corollary} \label{coro: connection ElK NVH}
	For every upper semianalytic function \( \psi \colon \Omega \to \mathbb{R} \), the equality
	\[ \sup_{P \in \cA(t, \omega)} E^{P}\big[\psi\big]  = \sup_{P \in \mathcal{R}(t, \omega)} E^{P}\big[\psi(\omega \ \widetilde{\otimes}_t \ X) \big] \]
	holds for every \( (t,\omega) \in \of 0, \infty \of \).        
\end{corollary}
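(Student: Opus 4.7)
The plan is to reduce both suprema to a common expression indexed by the auxiliary set $\cQ(t,\omega)$, invoking the preparatory Lemmata \ref{lem: connection Q and R} and \ref{lem: connection Q and C}. The essential point is that the two concatenations $\otimes_t$ and $\widetilde{\otimes}_t$ are intertwined by the forward shift $\theta_t$, so that the pushforwards $P \circ \xi_{t,\omega}^{-1}$ and $P \circ \theta_t^{-1}$ produce the same integrand once $\theta_t$ has been absorbed into the argument of $\psi$.

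For the left-hand side, Lemma \ref{lem: connection Q and C} identifies $\cA(t,\omega) = \{ P \circ \xi_{t,\omega}^{-1} : P \in \cQ(t,\omega) \}$, and the change-of-variables formula (valid for upper semianalytic integrands since $\xi_{t,\omega}$ is Borel) yields
$$\sup_{P \in \cA(t,\omega)} E^P[\psi] \ = \ \sup_{P \in \cQ(t,\omega)} E^P[\psi \circ \xi_{t,\omega}] \ = \ \sup_{P \in \cQ(t,\omega)} E^P[\psi(\omega \otimes_t X)].$$
For the right-hand side, Lemma \ref{lem: connection Q and R} asserts $\cR(t,\omega) = \{ P \circ \theta_t^{-1} : P \in \cQ(t,\omega) \}$, and the same change-of-variables principle gives
$$\sup_{P \in \cR(t,\omega)} E^P[\psi(\omega \widetilde{\otimes}_t X)] \ = \ \sup_{P \in \cQ(t,\omega)} E^P[\psi(\omega \widetilde{\otimes}_t \theta_t(X))].$$

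The proof is then closed by the pathwise identity $\omega \widetilde{\otimes}_t \theta_t(\omega') = \omega \otimes_t \omega'$, valid for all $\omega, \omega' \in \Omega$. Indeed, on $[0,t)$ both sides coincide with $\omega$, and for $s \geq t$ the definition of $\widetilde{\otimes}_t$ contributes $\omega(t) + \theta_t(\omega')(s-t) - \theta_t(\omega')(0) = \omega(t) + \omega'(s) - \omega'(t)$, which is precisely the value of $\omega \otimes_t \omega'$ at $s$. Combining the three displays gives the claimed equality.

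The corollary is thus a bookkeeping consequence of the two preparatory lemmata, with no substantial obstacle beyond keeping track of the interaction between $\xi_{t,\omega}$, $\theta_t$, and the two concatenations; the only step that is not a direct quotation of a lemma is the elementary pathwise identity above.
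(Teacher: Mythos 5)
Your proof is correct and is essentially the paper's own argument: both sides are rewritten as suprema over $\cQ(t,\omega)$ via Lemmata \ref{lem: connection Q and C} and \ref{lem: connection Q and R}, and the two are matched by the pathwise identity $\xi_{t,\omega} = (\omega \ \widetilde{\otimes}_t \ X) \circ \theta_t$ (which you verify directly, and which the paper simply asserts). The only difference is cosmetic: the paper reads left to right, while you transform each side to the common middle term.
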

\begin{proof}
	Using Lemma \ref{lem: connection Q and C} for the first, the identity \( \xi_{t, \omega} = (\omega \ \widetilde{\otimes}_t \ X) \circ \th_t \) for the second, and Lemma~\ref{lem: connection Q and R} for the final equality, we obtain
	\begin{align*} \sup_{P \in \cA(t, \omega)} E^{P}\big[\psi\big] &= \sup_{P \in \mathcal{Q}(t, \omega)} E^{P}\big[\psi \circ \xi_{t,\omega}\big]\\
	&=\sup_{P \in \mathcal{Q}(t, \omega)} E^{P_t}\big[\psi(\omega \ \widetilde{\otimes}_t \ X) \big], \\&= \sup_{P \in \mathcal{R}(t, \omega)} E^{P}\big[\psi(\omega \ \widetilde{\otimes}_t \ X) \big]. \end{align*}
	The proof is complete.
\end{proof}

For \(N > 0\), we call a set \(G \subset \Omega\) {\em \(N\)-bounded} if
\[
\sup \big\{ \| \omega (t) \| \colon \omega \in G, t \in [0, N] \big\} < \infty.
\]
By the Arzel\`a--Ascoli theorem, any relatively compact set \(G \subset \Omega\) is \(N\)-bounded for every~\(N > 0\). 

\begin{lemma} \label{lem: rel comp}
	Suppose that Condition \ref{cond: LG} holds.
	For any \(N > 0\), any \(N\)-bounded set \(G \subset \Omega\) and any bounded set \(K \subset \bR^\d\), the set 
	\begin{align*}
	\cR^* := \bigcup_{\substack{t \in [0, N]\\ \omega \in G}}\Big\{ P \in \fPas \colon P &\circ X_0^{-1} \in \{\delta_x \colon x \in K\}, \\
	&(\llambda \otimes P)\text{-a.e. } (dB^{P} /d\llambda, dC^{P}/d\llambda) \in \Theta (\cdot + t, \omega \ \widetilde{\otimes}_t\ X)  \Big\}
	\end{align*}
	is relatively compact in \(\mathfrak{P}(\Omega)\). Moreover, for every \(p \geq 1\),
	\begin{align} \label{eq: moment estimate with general p}
	\sup_{P \in \cR^*} E^P \Big[ \sup_{s \in [0, T]} \| X_s \|^{2p} \Big] < \infty.
	\end{align}
\end{lemma}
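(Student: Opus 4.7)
The plan is to first establish the moment bound \eqref{eq: moment estimate with general p} using a localized Gronwall argument driven by Condition \ref{cond: LG}, and then to deduce relative compactness via a standard Kolmogorov--Chentsov tightness estimate combined with Prokhorov's theorem.

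\textbf{Step 1 (Uniform linear growth of the characteristics).} For any \(P \in \cR^*\) with data \((t, \omega) \in [0, N] \times G\), the Lebesgue densities \((b^P_s, a^P_s)\) of \((B^P, C^P)\) lie in \(\Theta(s + t, \omega \ \widetilde{\otimes}_t\ X)\) for \((\llambda \otimes P)\)-a.e.\ \((s, \cdot)\). Since \(G\) is \(N\)-bounded, there exists a constant \(M_G > 0\) with \(\sup_{r \leq t \leq N, \omega \in G} \|\omega(r)\| \leq M_G\), and since \(K\) is bounded there exists \(M_K > 0\) with \(\|X_0\| \leq M_K\) under every \(P \in \cR^*\). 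From the definition of \(\widetilde{\otimes}_t\), we obtain, for every \(s \in [0, T]\),
\[
\sup_{r \in [0, s + t]} \| (\omega \ \widetilde{\otimes}_t\ X)(r) \| \leq M_G + M_K + \sup_{u \in [0, s]} \|X_u\|.
\]
Combining this with Condition \ref{cond: LG} applied with time horizon \(T + N\) yields a constant \(\tilde{C} = \tilde{C}(G, N, T)\) such that, \((\llambda \otimes P)\)-a.e.,
\[
\|b^P_s\|^2 + \on{tr}\big[a^P_s\big] \leq \tilde{C}\Big(1 + \sup_{u \in [0, s]} \|X_u\|^2 \Big),
\]
uniformly in \(P \in \cR^*\).

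\textbf{Step 2 (Moment estimates via Gronwall).} Under any \(P \in \cR^*\), write \(X = X_0 + M + B^P\) with \(M\) a continuous local martingale satisfying \(\langle M, M\rangle = C^P\). Introduce the localizing sequence \(\rho_n := \inf\{r \geq 0 \colon \|X_r\| \geq n\}\). For \(p \geq 1\), apply the Burkholder--Davis--Gundy and Jensen inequalities to get
\[
E\Big[\sup_{u \leq s \wedge \rho_n} \|M_u\|^{2p} \Big] \leq C_p\, T^{p - 1} E\Big[\int_0^{s \wedge \rho_n} \on{tr}\big[a^P_r\big]^p dr\Big],
\]
and similarly \(E\big[\sup_{u \leq s \wedge \rho_n} \|B^P_u\|^{2p}\big] \leq T^{2p - 1} E\big[\int_0^{s \wedge \rho_n} \|b^P_r\|^{2p} dr\big]\). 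Raising the bound from Step~1 to the \(p\)-th power and using \((1 + y)^p \leq 2^{p - 1}(1 + y^p)\), we obtain, for a new constant \(C = C(G, N, T, p)\),
\[
\phi_n(s) := E\Big[\sup_{u \leq s \wedge \rho_n} \|X_u\|^{2p}\Big] \leq C\Big( 1 + M_K^{2p} + \int_0^s \phi_n(r) \, dr \Big),
\]
where all terms are finite since \(\phi_n(s) \leq n^{2p}\). Gronwall's lemma then yields a bound on \(\phi_n(s)\) that is independent of \(n\) and of \(P \in \cR^*\), and Fatou's lemma (letting \(n \to \infty\)) delivers \eqref{eq: moment estimate with general p}.

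\textbf{Step 3 (Tightness).} With \eqref{eq: moment estimate with general p} in hand, a second application of BDG and Jensen, together with the bound from Step~1, yields the Kolmogorov-type estimate
\[
\sup_{P \in \cR^*} E^P\big[\|X_t - X_s\|^{2p}\big] \leq C' |t - s|^p, \quad 0 \leq s \leq t \leq T,
\]
for every \(p \geq 1\). Since the marginals \(\{P \circ X_0^{-1} \colon P \in \cR^*\} \subset \{\delta_x \colon x \in K\}\) are trivially tight, choosing \(p\) large enough allows us to invoke the Kolmogorov--Chentsov criterion on each compact time interval \([0, T]\). Prokhorov's theorem, together with the fact that the local uniform topology on \(\Omega\) is generated by the restrictions to \([0, T]\) with \(T \in \mathbb{N}\), then implies relative compactness of \(\cR^*\) in \(\mathfrak{P}(\Omega)\).

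The only delicate point is Step~2, where the \emph{a priori} finiteness needed to run Gronwall must be secured by the localization through \(\rho_n\); once the resulting bound is seen to be independent of \(n\), the remaining ingredients are classical.
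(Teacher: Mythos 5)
Your proposal is correct and follows essentially the same route as the paper: uniform linear growth over the data $(t,\omega,P)$, BDG plus localization plus Gronwall plus Fatou for the $2p$-th moment bound, and a Kolmogorov-type moment criterion combined with Prohorov for tightness (the paper cites Klenke's Theorem 21.42, which packages Kolmogorov--Chentsov with Prohorov on $C(\mathbb{R}_+;\mathbb{R}^\d)$, and verifies the increment estimate concretely with $p=2$, giving the exponents $\alpha=4$, $\beta=1$). Your Step 1, which spells out why the linear-growth constant can be chosen uniformly in $(t,\omega) \in [0,N]\times G$ and over starting points in $K$, is a useful bit of bookkeeping that the paper leaves implicit.
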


\begin{proof}
	Thanks to Prohorov's theorem, we need to show that \(\cR^*\) is tight, which we do by an application of Kolmogorov's criterion (\cite[Theorem~21.42]{klenke}), 
	i.e., we show the following two conditions:
	\begin{enumerate}
		\item[(a)]
		the family \(\{P \circ X_0^{-1} \colon P \in \cR^*\}\) is tight;
		\item[(b)] 
		for each \(T > 0\) there are numbers \(C, \alpha, \beta > 0\) such that, for all \(s, t \in [0, T]\), we have
		\[
		\sup_{P \in \cR ^*} E^P \big[ \|X_s - X_t\|^\alpha \big] \leq C|s - t|^{\beta + 1}.
		\]
	\end{enumerate}
	Part (a) follows easily from the fact that \(\sup_{P \in \mathcal{R}^*} E^P[ \|X_0\|] < \infty\), which uses the boundedness of~\(K\).
	We now show (b). Fix \(T > 0, p \geq 1\) and set 
	\[
	T_n := \inf \{t \geq 0 \colon \|X_t\| \geq n\}, \quad n \in \mathbb{N}.
	\]
	For a moment, take \(P \in \cR^*\) and let \((b^P, a^P)\) be the Lebesgue densities of the semimartingale characteristics of \(X\) under \(P\). Using the Burkholder--Davis--Gundy inequality, H\"older's inequality and the linear growth assumption, i.e., Condition \ref{cond: LG}, for all \(t \in [0, T]\), we obtain that 
	\begin{align*}
	E^P \Big[\sup_{s \in [0, t \wedge T_n]} &\|X_s\|^{2p} \Big] 
 \\&\leq C \Big( 1 + T^{2p - 1} \int_0^t E^P \big[ \|b^P_{s \wedge T_n}\|^{2p} \big] ds + T^{p-1} \int_0^t E^P \big[ \on{tr} \big[a^P_{s \wedge T_n}\big]^{p} \big] ds \Big)
	\\&\leq C \Big( 1 + \int_0^t E^P \Big[ \sup_{r \in [0, s \wedge T_n]} \|X_r\|^{2p} \Big] ds \Big).
	\end{align*}
	Here, the constant might depend on \(\d, p, T, N, G\) and \(K\) but it is independent of \(n, t\) and \(P\).
	Finally, using Gronwall's lemma and Fatou's lemma, we conclude that \eqref{eq: moment estimate with general p} holds. It remains to finish the proof for (b).
	Take \(0 \leq s \leq t \leq T\).
	Using again the Burkholder--Davis--Gundy inequality, the linear growth assumption and \eqref{eq: moment estimate with general p}, we get, for every \(P \in \cR^*\), that
	\begin{equation*} \begin{split}
	E^P \big[ \| X_t - X_s \|^4 \big] &\leq C \Big( (t - s)^3 \int_{s}^t E^P \big[\|b^P_r\|^4 \big] dr + (t - s) \int_{s}^{t} E^P \big[ \on{tr} \big[a^P_r\big]^2 \big] dr \Big)
	\\&\leq C \big((t - s)^4 + (t - s)^2\big)   \Big( 1 + E^P \Big[ \sup_{r \in [0, T]} \|X_r\|^4 \Big] \Big)
	\\&\leq C \big((t - s)^4 + (t - s)^2\big),
	\end{split} \end{equation*}
	where the constant \(C > 0\) is independent of \(s,t\) and \(P\).
	Consequently, we conclude that (b) holds with \(\alpha = 4\) and \(\beta = 1\). The proof is complete.
\end{proof}

We collect more technical observations.
Recall that \(D(\bR_+; \bR^\d)\) denotes the space of \cadlag functions from \(\bR_+\) into \(\bR^\d\). In the following we endow \(D (\bR_+; \bR^\d)\) with the Skorokhod \(J_1\) topology, see \cite[Section~VI.1]{JS} or \cite[Section~XV.1]{HWY} for details on this topology. For \(\omega, \alpha \in \Omega\) and \(t \in \bR_+\), we define the concatenation
\[
\omega \hspace{0.05cm}\widehat{\otimes}_t \hspace{0.05cm} \alpha := \omega \1_{[0, t)} + \alpha \1_{[t, \infty)}.
\]
Notice that \(\omega \hspace{0.05cm}\widehat{\otimes}_t \hspace{0.05cm} \alpha \in D(\bR_+; \bR^\d)\). Furthermore, recall that we endow \(\Omega\) with the local uniform topology.
\begin{lemma} \label{lem: concatenation continuous}
	The map 
	\(
	\Omega \times \Omega \times (0, \infty) \ni (\omega, \alpha, t) \mapsto \omega \hspace{0.05cm} \widehat{\otimes}_t \hspace{0.05cm} \alpha \in D(\bR_+; \bR^\d)
	\)
	is continuous.
\end{lemma}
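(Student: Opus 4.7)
The plan is to verify $J_1$-convergence directly using the standard characterisation of the Skorokhod topology on $D(\bR_+; \bR^\d)$ via time changes. Let $(\omega_n, \alpha_n, t_n) \to (\omega, \alpha, t)$ in $\Omega \times \Omega \times (0, \infty)$. The limit path $\omega \widehat{\otimes}_t \alpha$ is continuous on $\bR_+ \setminus \{t\}$, so by \cite[Section~VI.1]{JS} it suffices to exhibit, for each $n$, a strictly increasing continuous bijection $\lambda_n \colon \bR_+ \to \bR_+$ such that, for every $N > t$,
\[
\sup_{s \in [0, N]} |\lambda_n(s) - s| \to 0 \quad \text{and} \quad \sup_{s \in [0, N]} \|(\omega_n \widehat{\otimes}_{t_n} \alpha_n)(\lambda_n(s)) - (\omega \widehat{\otimes}_t \alpha)(s)\| \to 0.
\]

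I would use the piecewise linear time change
\[
\lambda_n(s) := \begin{cases} (t_n/t)\, s, & s \in [0, t], \\ s + (t_n - t), & s \in [t, \infty), \end{cases}
\]
which is a strictly increasing continuous bijection of $\bR_+$ satisfying $\lambda_n(t) = t_n$ and $|\lambda_n(s) - s| \leq |t_n - t|$ for every $s \geq 0$; moreover, the slopes $t_n/t$ and $1$ give a Skorokhod-type modulus $|\log(t_n/t)| \to 0$, so $\lambda_n$ is a valid time change. The key structural feature is the alignment $\lambda_n(t) = t_n$, which transports the (potential) jump of $\omega_n \widehat{\otimes}_{t_n} \alpha_n$ at $t_n$ onto that of $\omega \widehat{\otimes}_t \alpha$ at $t$.

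For the second estimate I would split $[0, N]$ into $[0, t)$, $\{t\}$ and $(t, N]$. On $[0, t)$ one has $\lambda_n(s) < t_n$, hence $(\omega_n \widehat{\otimes}_{t_n} \alpha_n)(\lambda_n(s)) = \omega_n(\lambda_n(s))$, and the triangle inequality
\[
\|\omega_n(\lambda_n(s)) - \omega(s)\| \leq \|\omega_n(\lambda_n(s)) - \omega(\lambda_n(s))\| + \|\omega(\lambda_n(s)) - \omega(s)\|
\]
reduces the required bound to local uniform convergence $\omega_n \to \omega$ combined with uniform continuity of $\omega$ on a compact interval (using $|\lambda_n(s) - s| \leq |t_n - t|$). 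The analogous argument on $(t, N]$ handles the $\alpha$-side, and at $s = t$ one simply computes $(\omega_n \widehat{\otimes}_{t_n} \alpha_n)(\lambda_n(t)) = \alpha_n(t_n) \to \alpha(t) = (\omega \widehat{\otimes}_t \alpha)(t)$. There is no serious technical obstacle; the only point requiring care is the alignment $\lambda_n(t) = t_n$ of the discontinuity of the limit with the discontinuity of the approximants, which is built into the construction.
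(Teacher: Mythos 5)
Your proof is correct and follows essentially the same route as the paper: the same piecewise-linear time change aligning $t$ with $t_n$, the same split of $[0,N]$ at the discontinuity, and a triangle-inequality estimate combining local uniform convergence with a modulus-of-continuity term (the paper uses equicontinuity of $\{\omega_n\}$ via Arzel\`a--Ascoli where you use uniform continuity of the limit $\omega$ on compacts, which is an immaterial variant).
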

\begin{proof}
	Take two sequences \((\omega^n)_{n \in \mathbb{N}}, (\alpha^n)_{n \in \mathbb{N}} \subset \Omega\) and two functions \(\omega, \alpha \in \Omega\) such that \(\omega^n \to \omega\) and \(\alpha^n \to \alpha\). Furthermore, take a sequence \((t^n)_{n \in \mathbb{N}} \subset (0, \infty)\) and a time \(t > 0\) such that \(t^n \to t\). We have to prove that 
	\(
	\omega^n \hspace{0.05cm}\widehat{\otimes}_{t^n} \hspace{0.05cm} \alpha^n \to \omega \hspace{0.05cm}\widehat{\otimes}_t \hspace{0.05cm} \alpha
	\)
	in the Skorokhod \(J_1\) topology. For \(n \in \mathbb{N}\), define 
	\[
	\lambda^n (s) := \tfrac{t^n s}{t} \1_{\{s < t\}} + (s + t^n - t) \1_{\{s \geq t\}}, \quad s \in \bR_+.
	\]
	Evidently, \(\lambda^n\) is a strictly increasing continuous function such that \(\lambda (0) = 0\) and \(\lambda^n (s) \to \infty\) as \(s \to \infty\). Furthermore, for every \(N > 0\), we have 
	\[
	\sup_{s \in [0, N]}| \lambda^n (s) - s | \leq N | \tfrac{t^n}{t} - 1 | + | t^n - t | \to 0
	\]
	as \(n \to \infty\). Notice that 
	\[
	(\omega^n \hspace{0.05cm}\widehat{\otimes}_{t^n} \hspace{0.05cm} \alpha^n) (\lambda_n (s)) = \omega^n ( \tfrac{t^ns}{t}) \1_{\{s < t\}} + \alpha^n (s + t^n - t) \1_{\{s \geq t\}}, \quad s \in \bR_+.
	\]
	Thus, for every \(N > 0\) and some \(T = T_N > 0\) large enough, we have 
	\begin{align*}
	\sup_{s \in [0, N]} \|  (\omega^n  \hspace{0.05cm}&\widehat{\otimes}_{t^n} \hspace{0.05cm} \alpha^n)(\lambda^n (s)) - (\omega \hspace{0.05cm}\widehat{\otimes}_t \hspace{0.05cm} \alpha) (s) \| 
	\\&\leq \sup \{ \| \omega^n (s) - \omega (s) \| \colon s \in [0, N] \} + \sup \{ \| \alpha^n (s) - \alpha (s) \| \colon s \in [0, N]\}
	\\&\hspace{2.9cm} + \sup \{ \| \omega^n (z) - \omega^n (v) \| \colon z, v \in [0, T], |z - v| \leq N | \tfrac{t^n}{t} - 1|\}
	\\&\hspace{2.9cm} + \sup \{ \| \alpha^n (z) - \alpha^n (v) \| \colon z, v \in [0, T], |z - v| \leq  | t^n - t |\}.
	\end{align*}
	Thanks to the Arzel\`a--Ascoli theorem, we conclude that all terms on the r.h.s. converge to zero as \(n \to \infty\), which implies that 
	\[
	\sup_{s \in [0, N]} \| (\omega^n \hspace{0.05cm}\widehat{\otimes}_{t^n} \hspace{0.05cm} \alpha^n)(\lambda^n (s)) - (\omega \hspace{0.05cm}\widehat{\otimes}_t \hspace{0.05cm} \alpha) (s) \| \to 0 
	\]
	as \(n \to \infty\). Consequently, by virtue of \cite[Theorem 15.10]{HWY},  \(
	\omega^n \hspace{0.05cm}\widehat{\otimes}_{t^n} \hspace{0.05cm} \alpha^n \to \omega \hspace{0.05cm}\widehat{\otimes}_t \hspace{0.05cm} \alpha
	\)
	in the Skorokhod \(J_1\) topology. The proof is complete.
\end{proof}

In the following lemma we take care of the case \(t = 0\).
\begin{lemma} \label{lem: concatenation cont in zero}
	Let \((\omega^n)_{n \in\mathbb{Z}_+}, (\alpha^n)_{n \in \mathbb{Z}_+} \subset \Omega\) and \((t^n)_{n \in \mathbb{N}} \subset \bR_+\) such that \(\omega^n \to \omega^0, \alpha^n \to \alpha^0\) and \(t^n \to 0\). Furthermore, suppose that \(\omega^0 (0) = \alpha^0 (0)\). Then, 
	\[
	\omega^n \ \widehat{\otimes}_{t^n}\ \alpha^n \to \alpha^0 
	\]
	locally uniformly. 
\end{lemma}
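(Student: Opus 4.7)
The plan is to bound $\sup_{s \in [0,N]} \|(\omega^n \widehat{\otimes}_{t^n}\alpha^n)(s) - \alpha^0(s)\|$ for an arbitrary fixed $N>0$, by splitting the interval $[0,N]$ at $t^n$ (assuming without loss of generality that $n$ is large enough so that $t^n < N$). On the two pieces, the concatenation has a simple closed form, so the argument reduces to controlling two suprema separately.

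On $[t^n, N]$ we have $(\omega^n \widehat{\otimes}_{t^n}\alpha^n)(s) = \alpha^n(s)$, so the relevant supremum is bounded by $\sup_{s\in[0,N]} \|\alpha^n(s) - \alpha^0(s)\|$, which tends to zero by the assumed local uniform convergence $\alpha^n \to \alpha^0$. On $[0, t^n)$ we have $(\omega^n \widehat{\otimes}_{t^n}\alpha^n)(s) = \omega^n(s)$, and I will use the triangle inequality
\[
\|\omega^n(s) - \alpha^0(s)\| \leq \|\omega^n(s) - \omega^0(s)\| + \|\omega^0(s) - \omega^0(0)\| + \|\alpha^0(0) - \alpha^0(s)\|,
\]
where I have inserted $\omega^0(0) = \alpha^0(0)$ by hypothesis. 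The first summand is at most $\sup_{s\in[0,N]}\|\omega^n(s) - \omega^0(s)\|$, which vanishes in the limit by local uniform convergence of $\omega^n$. The second and third summands are bounded, uniformly in $s \in [0, t^n)$, by the moduli of continuity of $\omega^0$ and $\alpha^0$ at the origin evaluated at $t^n$, both of which tend to zero as $t^n \to 0$ since $\omega^0, \alpha^0$ are continuous functions.

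Combining the two cases yields $\sup_{s \in [0,N]} \|(\omega^n \widehat{\otimes}_{t^n}\alpha^n)(s) - \alpha^0(s)\| \to 0$ as $n \to \infty$. Since $N$ was arbitrary, this establishes the claimed local uniform convergence. There is no real obstacle here; the only point to be careful about is that the hypothesis $\omega^0(0) = \alpha^0(0)$ is precisely what makes the two pieces match up in the limit, and this is why the statement is separated from Lemma \ref{lem: concatenation continuous}, where $t>0$ allowed one to stay in the Skorokhod $J_1$ topology without requiring such a matching condition.
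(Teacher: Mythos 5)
Your proof is correct and follows essentially the same structure as the paper's: split $[0,N]$ at $t^n$, handle the $[t^n,N]$ piece by local uniform convergence of $\alpha^n$, and handle the $[0,t^n)$ piece by inserting the identity $\omega^0(0)=\alpha^0(0)$ into a triangle inequality. Your decomposition is in fact mildly cleaner than the paper's: by comparing $\omega^n(s)$ to $\omega^0(s)$ first and then to $\omega^0(0)$, you only need the pointwise continuity of $\omega^0$ at $0$, whereas the paper compares $\omega^n(s)$ to $\omega^n(0)$ and therefore has to invoke the Arzel\`a--Ascoli theorem to control the modulus of continuity of the $\omega^n$ uniformly in $n$.
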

\begin{proof}
	For every \(T > 0\), we estimate 
	\begin{align*}
	\sup_{s \in [0, T]} \| (\omega^n \ &\widehat{\otimes}_{t^n}\ \alpha^n) (s) - \alpha^0 (s) \| 
	\\&\leq \sup_{s \in [0, T]} \| \omega^n (s) - \alpha^0 (s)\| \1_{\{s < t^n\}} + \sup_{s \in [0, T]} \| \alpha^n (s) - \alpha^0 (s) \| \1_{\{s \geq t^n\}}
	\\&\leq \sup_{s \in [0, T]} \| \omega^n (s) - \omega^n (0) + \omega^n (0) - \omega^0(0) + \alpha^0 (0) - \alpha^0 (s) \| \1_{\{s < t^n\}} 
	\\&\hspace{2cm}+ \sup_{s \in [0, T]} \| \alpha^n (s) - \alpha^0 (s) \|
	\\&\leq \sup \{ \| \omega^n (s) - \omega^n (r)\| \colon s, r \in [0, T], |s - r| \leq t^n \} + \|\omega^n (0) - \omega^0(0)\|
	\\&\hspace{2cm}+ \sup \{ \| \alpha^0 (s) - \alpha^0 (r) \| \colon s, r \in [0, T], |s - r| \leq t^n \} 
	\\&\hspace{2cm}+ \sup_{s \in [0, T]} \| \alpha^n (s) - \alpha (s) \|.
	\end{align*}
	By the Arzel\`a--Ascoli theorem, the r.h.s. tends to zero as \(n \to \infty\). This completes the proof. 
\end{proof}

\begin{corollary} \label{coro: real concatenation continuous}
	The maps 
	\[
	\Omega \times \Omega \times \bR_+ \ni (\omega, \alpha, t) \mapsto \omega \otimes_t \alpha \in \Omega,\qquad  \Omega \times \Omega \times \bR_+ \ni (\omega, \alpha, t) \mapsto  \omega \hspace{0.05cm} \widetilde{\otimes}_t \hspace{0.05cm} \alpha \in \Omega
	\] are continuous.
\end{corollary}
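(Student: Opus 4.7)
The plan is to reduce the claim to Lemmata \ref{lem: concatenation continuous} and \ref{lem: concatenation cont in zero} by rewriting both continuous concatenations in terms of the \cadlag concatenation \(\widehat{\otimes}\). For \((\omega, \alpha, t) \in \Omega \times \Omega \times \bR_+\), introduce the auxiliary paths
\[
\beta^{1}_{\omega,\alpha,t}(s) := \omega(t) + \alpha(s) - \alpha(t), \qquad \beta^{2}_{\omega,\alpha,t}(s) := \omega(t) + \alpha((s-t)^+) - \alpha(0),
\]
both of which lie in \(\Omega\). A direct comparison with the definitions gives
\[
\omega \otimes_t \alpha = \omega\ \widehat{\otimes}_t\ \beta^1_{\omega,\alpha,t}, \qquad \omega\ \widetilde{\otimes}_t\ \alpha = \omega\ \widehat{\otimes}_t\ \beta^2_{\omega,\alpha,t},
\]
and the identity \(\beta^i_{\omega,\alpha,t}(t) = \omega(t)\) for \(i = 1, 2\) confirms that both left-hand sides are actually continuous paths, so that the concatenations do land in \(\Omega\).

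The first step is to show that the maps \((\omega,\alpha,t) \mapsto \beta^i_{\omega,\alpha,t}\) are continuous from \(\Omega \times \Omega \times \bR_+\) into \(\Omega\) equipped with the local uniform topology. For \(\beta^1\), this is essentially a consequence of the joint continuity of evaluation along locally uniformly convergent sequences at convergent times. For \(\beta^2\), the same reasoning applies after observing that \((s - t^n)^+ \to (s - t)^+\) uniformly in \(s\) on any compact, which combined with the equicontinuity on compacts of a locally uniformly convergent sequence in \(\Omega\) yields the required uniform control. The proofs here are routine Arzel\`a--Ascoli type estimates entirely analogous to those already carried out in the proof of Lemma \ref{lem: concatenation cont in zero}.

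The second step treats the case \(t > 0\): Lemma \ref{lem: concatenation continuous} applied to the triples \((\omega^n, \beta^i_{\omega^n,\alpha^n,t^n}, t^n)\) produces Skorokhod \(J_1\) convergence of the concatenated paths, and since the limit is continuous, this upgrades automatically to locally uniform convergence, using the standard fact that the Skorokhod \(J_1\) and the local uniform topology coincide on \(C(\bR_+; \bR^\d)\) (see, e.g., \cite[Proposition VI.1.17]{JS}). The third step handles the boundary case \(t = 0\): the compatibility \(\beta^i_{\omega^0,\alpha^0,0}(0) = \omega^0(0)\) (valid for both \(i = 1, 2\)) is exactly the hypothesis of Lemma \ref{lem: concatenation cont in zero}, so the locally uniform convergence of \(\omega^n\ \widehat{\otimes}_{t^n}\ \beta^i_{\omega^n,\alpha^n,t^n}\) to \(\beta^i_{\omega^0,\alpha^0,0} = \omega^0\ \widetilde{\otimes}_0\ \alpha^0 = \omega^0 \otimes_0 \alpha^0\) follows at once.

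The main obstacle is really just this boundary point \(t = 0\): the map \(\widehat{\otimes}_t\) itself is not jointly continuous there because the interval \([0, t)\) degenerates (which is precisely why Lemma \ref{lem: concatenation continuous} is stated for \(t > 0\)). The construction above is tailored so that this degeneracy is harmless for the continuous concatenations \(\otimes\) and \(\widetilde{\otimes}\): their defining values at \(s = t\) match from both sides, which is exactly the condition that Lemma \ref{lem: concatenation cont in zero} requires, so no new analysis is needed beyond the verification above.
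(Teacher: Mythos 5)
Your proposal is correct and follows essentially the same route as the paper: the paper likewise rewrites $\omega^n \ \widetilde{\otimes}_{t^n}\ \alpha^n$ as $\omega^n \ \widehat{\otimes}_{t^n}\ \gamma^n$ with $\gamma^n = \omega^n(t^n) + \alpha^n((\cdot - t^n)^+) - \alpha^n(0)$ (your $\beta^2$), proves $\gamma^n \to \gamma^0$ locally uniformly by an Arzel\`a--Ascoli estimate, invokes Lemma \ref{lem: concatenation continuous} plus \cite[Proposition VI.1.17]{JS} when $t^0 > 0$, and falls back to Lemma \ref{lem: concatenation cont in zero} when $t^0 = 0$. The only cosmetic difference is that you spell out the auxiliary path $\beta^1$ for the $\otimes_t$ case, while the paper proves $\widetilde{\otimes}_t$ and remarks that $\otimes_t$ goes the same way.
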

\begin{proof}
	We only prove that \((\omega, \alpha, t) \mapsto \omega \hspace{0.05cm} \widetilde{\otimes}_t \hspace{0.05cm} \alpha\) is continuous. The continuity of \((\omega, \alpha, t) \mapsto \omega \otimes_t \alpha\) follows the same way.
	Let \((\omega^n)_{n \in \mathbb{Z}_+}, (\alpha^n)_{n \in \mathbb{Z}_+} \subset \Omega\) and \((t^n)_{n \in \mathbb{Z}_+}\subset \bR_+\) be sequences such that \(\omega^n \to \omega^0, \alpha^n \to \alpha^0\) and \(t^n \to t^0\). Furthermore, define 
	\[
	\gamma^n := \omega^n (t^n) + \alpha^n ( (\cdot - t^n)^+ ) - \alpha^n (0), \qquad n \in \mathbb{Z}_+.
	\]
	For every \(T > \sup_{n \in \mathbb{Z}_+} t^n\), we estimate
	\begin{align*}
	\sup_{s \in [0, T]} \| &\gamma^n (s) - \gamma^0 (s) \| 
	\\&\leq \|\omega^n (t^n) - \omega^0 (t^0)\| + \sup_{s \in [0, T]} \| \alpha^n ( (s - t^n)^+ ) - \alpha^0 ( (s - t^0)^+ ) \| + \|\alpha^n (0) - \alpha^0 (0) \|
	\\&\leq \sup \{ \|\omega^n (s) - \omega^n (r)\| \colon s, r \in [0, T], |s - r| \leq |t^n - t^0|\} 
	+ \|\omega^n (t^0) - \omega^0 (t^0)\|
	\\&\hspace{3.325cm}+ \sup \{\|\alpha^n (s) - \alpha^n (r)\| \colon s, r \in [0, T], |s - r| \leq |t^n - t^0|\} 
	\\&\hspace{3.325cm}+ \sup_{s \in [0, T]} \|\alpha^n (s) - \alpha^0 (s)\|
	+ \|\alpha^n (0) - \alpha^0 (0)\|.
	\end{align*}
	By the Arzel\`a--Ascoli theorem, the r.h.s. converges to zero as \(n \to \infty\). Hence, we conclude that \(\gamma^n \to \gamma^0\). If \(t^0 > 0\), then we can assume that \(t^n > 0\) for all \(n \in \mathbb{N}\) and Lemma \ref{lem: concatenation continuous} yields that 
	\begin{align} \label{eq: contin convergence coro main step}
	\omega^n \ \widetilde{\otimes}_{t^n} \ \alpha^n = \omega^n \ \widehat{\otimes}_{t^n} \ \gamma^n \to \omega^0\ \widehat{\otimes}_{t^0} \ \gamma^0 = \omega^0 \ \widetilde{\otimes}_{t^0} \ \alpha^0
	\end{align}
	in the Skorokhod \(J_1\) topology. As \(\omega^0 \ \widetilde{\otimes}_{t^0} \ \alpha^0 \in \Omega\), we get from \cite[Proposition~VI.1.17]{JS} that the convergence is even locally uniformly. Finally, in case \(t^0 = 0\), Lemma \ref{lem: concatenation cont in zero} yields that \eqref{eq: contin convergence coro main step} holds locally uniformly, because \(\gamma^0 (0) = \omega^0 (0)\).
	We conclude the continuity of \((\omega, \alpha, t) \mapsto \omega \ \widetilde{\otimes}_t \ \alpha\).
\end{proof}

For \(\alpha, \omega \in D(\bR_+; \bR^\d)\) and \(t \in \bR_+\), we define 
\[
\omega \ \overline{\otimes}_t \ \alpha := \omega \1_{[0, t)} + \alpha (\cdot - t) \1_{[t, \infty)} \in D(\bR_+; \bR^\d).
\]

\begin{lemma} \label{lem: derivatives}
	Let \((t,\omega) \in [0, T ) \times D(\bR_+; \bR^\d)\) and \( \phi \in C^{1,2}([t, T] \times D(\bR_+; \bR^\d) ; \bR)\). Then, 
	\[
	((s,\alpha) \mapsto F(s,\alpha) := \phi(s+t, \omega \ \overline{\otimes}_t \ \alpha)) \in C^{1,2}([0, T - t] \times D(\bR_+; \bR^\d) ; \bR),
	\]
	with
	\begin{equation}\label{eq: formuals derivatives}
	\begin{split}
	\p F(s,\alpha) & = \p\phi(s+t, \omega \ \overline{\otimes}_t \ \alpha), \\
	\nabla F(s,\alpha) & = \nabla\phi(s+t, \omega \ \overline{\otimes}_t \ \alpha),\\
	\nabla^2 F(s,\alpha) & = \nabla^2\phi(s+t, \omega \ \overline{\otimes}_t \ \alpha).
	\end{split}
	\end{equation}
	for all \( (s,\alpha) \in [0, T - t] \times D(\bR_+; \bR^\d) \).
\end{lemma}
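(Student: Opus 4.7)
The strategy is to reduce everything to the corresponding properties of $\phi$ via two elementary concatenation identities and then transfer continuity through the shift map. First, I would observe that the map
\[
\Phi \colon [0, T-t] \times D(\bR_+; \bR^\d) \to [t, T] \times D(\bR_+; \bR^\d), \qquad \Phi(s,\alpha) := (s+t, \omega \ \overline{\otimes}_t \ \alpha),
\]
is Lipschitz of constant one with respect to the pseudometric $d$, since for $r \in [0, T]$ one has
\[
(\omega \ \overline{\otimes}_t \ \alpha)(r \wedge (s+t)) - (\omega \ \overline{\otimes}_t \ \alpha')(r \wedge (s'+t)) = \bigl(\alpha((r-t)\wedge s) - \alpha'((r-t)\wedge s')\bigr)\1_{[t,T]}(r),
\]
so the supremum over $r \in [0,T]$ reduces to the supremum over $r \in [0, T-t]$. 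In particular, $F = \phi \circ \Phi$ inherits $d$-continuity from $\phi$.

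For the horizontal derivative at $(s,\alpha)$ with $s < T-t$, the key identity is
\[
\omega \ \overline{\otimes}_t \ (\alpha(\cdot \wedge s)) = (\omega \ \overline{\otimes}_t \ \alpha)(\cdot \wedge (s+t)),
\]
which one verifies by checking the three regimes $r < t$, $t \leq r \leq t+s$, $r > t+s$ separately (using that $\omega \ \overline{\otimes}_t \ \alpha$ equals $\alpha(s)$ on $[t+s, \infty)$ when it is frozen at $t+s$). Using this identity, the difference quotient defining $\p F(s,\alpha)$ becomes precisely the difference quotient defining $\p \phi$ at $(s+t, \omega \ \overline{\otimes}_t \ \alpha)$ with the same increment $h$, yielding the first formula in \eqref{eq: formuals derivatives}. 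For the vertical derivative one uses the analogous identity
\[
\omega \ \overline{\otimes}_t \ \bigl(\alpha + h e_i \1_{[s, T-t]}\bigr) = (\omega \ \overline{\otimes}_t \ \alpha) + h e_i \1_{[s+t, T]},
\]
which is immediate from the definition of $\ \overline{\otimes}_t\ $, so the defining limit reduces to $\partial_i \phi(s+t, \omega \ \overline{\otimes}_t \ \alpha)$; iterating gives the Hessian formula.

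Having the pointwise formulas \eqref{eq: formuals derivatives}, the required $d$-continuity of $\p F, \nabla F, \nabla^2 F$ on $[0, T-t] \times D(\bR_+; \bR^\d)$ follows at once by composing the continuous maps $\p\phi, \nabla \phi, \nabla^2 \phi$ with the $d$-Lipschitz map $\Phi$. This takes care of all interior points. The only subtle point, which I view as the main (but still mild) obstacle, is the boundary $s = T - t$: here $\p F$ and $\p \phi$ are defined by the one-sided limits $\lim_{h \nearrow T-t}$ and $\lim_{h \nearrow T}$, respectively, and one must check that these two definitions are compatible under the shift by $t$. This is immediate from the interior formula $\p F(h, \alpha) = \p \phi(h + t, \omega \ \overline{\otimes}_t \ \alpha)$ (valid for $h < T-t$) together with the continuity of $\p \phi$ up to time $T$ guaranteed by $\phi \in C^{1,2}([t,T] \times D(\bR_+;\bR^\d); \bR)$. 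This completes the verification that $F \in C^{1,2}([0, T-t] \times D(\bR_+; \bR^\d); \bR)$ with the claimed derivatives.
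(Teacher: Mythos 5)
Your proof is correct and follows essentially the same route as the paper: push the time-freezing and vertical-bump operations through the concatenation to reduce the difference quotients of $F$ to those of $\phi$, then obtain continuity of $\p F$, $\nabla F$, $\nabla^2 F$ by composing with the map $(s,\alpha) \mapsto (s+t, \omega\,\overline{\otimes}_t\,\alpha)$, which is $1$-Lipschitz from $d_{T-t}$ to $d_T$. The only cosmetic difference is that you spell out the two concatenation identities as displayed equations and add an explicit check of the boundary case $s = T-t$, neither of which changes the substance.
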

\begin{proof}
	For \(\alpha \in D(\bR_+; \bR^\d)\), \(0 \leq s < T-t\) and \(h > 0\) small enough, a short computation shows that
	\begin{align*}
	F(h + s, \alpha( \cdot \wedge s)) &- F(s, \alpha(\cdot \wedge s)) 
	\\&= \phi(h + s + t , \omega \ \overline{\otimes}_t \ \alpha(\cdot \wedge s)) - \phi(s + t, \omega \ \overline{\otimes}_t \ \alpha(\cdot \wedge s)) 
	\\&= \phi(h + s + t , (\omega \ \overline{\otimes}_t \ \alpha)(\cdot \wedge (s + t))) - \phi(s + t, (\omega \ \overline{\otimes}_t \ \alpha)(\cdot \wedge (s + t))),
	\end{align*}
	and we conclude that \(\p F(s,\alpha) = \p\phi(s+t, \omega \ \overline{\otimes}_t \ \alpha)\).
	Furthermore, we get, for every \(i = 1, \dots, \d\),
	\begin{align*}
	F(s , \alpha + h e_i\1_{[s, T-t]}) - F(s, \alpha) 
	&= \phi(s + t, \omega \ \overline{\otimes}_t \ (\alpha +h e_i\1_{[s, T-t]})) - \phi(s+t, \omega \ \overline{\otimes}_t \ \alpha)
	\\&= \phi(s + t, (\omega \ \overline{\otimes}_t \ \alpha) +h e_i\1_{[s+t, T]}) - \phi(s+t, \omega \ \overline{\otimes}_t \ \alpha),
	\end{align*}
	which implies \(\nabla F(s,\alpha)  = \nabla \phi(s+t, \omega \ \overline{\otimes}_t \ \alpha)\). 
	In the same way, we derive the formula for \(\nabla^2 F\). 
	Similar to \eqref{eq: d metric}, for every \((u, \zeta), (s, \alpha) \in [0, T] \times D (\bR_+; \bR^\d)\), we define 
	\[
	d_T ( (u, \zeta), (s, \alpha) ) := |u - s| + \sup_{r \in [0, T]} \| \zeta (r \wedge u) - \alpha (r \wedge s) \|.
	\]
	For every \((u, \zeta), (s, \alpha) \in [0, T - t] \times D (\bR_+; \bR^\d)\), we compute that 
	\begin{align*}
	d_T ( (u + t, \omega \ \overline{\otimes}_t \ \zeta)&, (s  + t, \omega \ \overline{\otimes}_t \ \alpha) ) 
 \\&= |u - s| + \sup_{r \in [0, T]} \| \zeta ( (r - t) \wedge u ) - \alpha ( (r - t) \wedge s ) \| \1_{\{r \geq t\}} 
	\\&\leq d_{T - t} ( (u, \zeta), (s, \alpha) ).
	\end{align*}
	Hence, the map
	\begin{equation} \label{eq: pf derivatives}
	(s,\alpha) \mapsto (s + t, \omega \ \overline{\otimes}_t \ \alpha) 
	\end{equation}
	is (Lipschitz) continuous from \(( [0, T - t] \times D (\bR_+; \bR^\d), d_{T - t})\) into \(([0, T] \times D(\bR_+; \bR^\d), d_T)\). We conclude that the claimed continuity properties of \( F, \p F, \nabla F, \nabla^2 F \) 
	follow from continuity of \eqref{eq: pf derivatives} together with the assumed continuity properties of
	\( \phi, \p \phi, \nabla \phi, \nabla^2 \phi\). The proof is complete.
\end{proof}

\subsection{Subsolution Property} \label{sec: pf subsolution}
	In this section we prove that the value function is a weak sense viscosity subsolution to the nonlinear PPDE \eqref{eq: PIDE}. 
	\begin{lemma} \label{lem: subsolution}
		Assume that \(F\) is a compact metrizable space and that the Conditions \ref{cond: LG}, \ref{cond: joint continuity in all} and \ref{cond: convexity} hold. The value function \(v\) is a weak sense viscosity subsolution to \eqref{eq: PIDE}.
	\end{lemma}
	\begin{proof}
		Notice that
		\[
		v (T, \omega) = \sup_{P \in \cA(T, \omega)} E^P \big[ \psi \big] = \psi (\omega), 
		\]
		as \(P( X = \omega \text{ on } [0, T]) = 1\) for all \(P \in \cA(T, \omega)\).
		Thus, we only have to prove the subsolution property. 
		Take \((t,\omega) \in \of 0,T \of \) and \( \phi \in C^{1,2}_{pol}(\of t,T \gs ; \bR )\) satisfying
		\[
		0 = (v - \phi)(t,\omega) = \sup \{ (v - \phi)(s,\omega') \colon (s,\omega') \in \of t,T \gs \}.
		\]
		Fix \(0 < u < T - t\). 
		The DPP (Theorem~\ref{theo: DPP}) and Corollary \ref{coro: connection ElK NVH} show that 
		\begin{equation} \label{eq: visco sub conseq of DPP}
		\begin{split}
		0 = \sup_{P \in \cA (t, \omega)} E^P \big[ v (u + t, X)  - v (t, \omega) \big] &\leq \sup_{P \in \cA(t, \omega)} E^P \big[ \phi (u + t, X) - \phi (t, \omega) \big]
		\\&= \sup_{P \in \cR( t, \omega) } E^P \big[ \phi (u + t, \omega \ \widetilde{\otimes}_t \ X) - \phi (t, \omega) \big].
		\end{split}
		\end{equation}
		We fix \(P \in \cR (t, \omega)\) and denote the Lebesgue densities of the \(P\)-characteristics of \(X\) by \((b^P, a^P)\).
		The pathwise It\^o formula given by \cite[Theorem 2.2]{cosso} yields, together with Lemma~\ref{lem: derivatives}, that 
		\begin{align*}
		\phi (u + t, \omega \ \widetilde{\otimes}_t& \ X) - \phi (t, X) \\&= \int_0^{u} \p \phi (s + t, \omega \ \widetilde{\otimes}_t \ X) d s + \frac{1}{2} \int_0^{u }  \on{tr} \big[  \nabla^2 \phi (s + t, \omega \ \widetilde{\otimes}_t \ X) a^P_s \big] ds 
		\\&\hspace{6cm} + \int_0^{u} \langle \nabla \phi (s + t, \omega \ \widetilde{\otimes}_t \ X) , d X_{s} \rangle.
		\end{align*}
		By virtue of the linear growth condition, the polynomial growth of \(\nabla \phi\) and the moment bound from Lemma \ref{lem: rel comp}, the local martingale part of the stochastic integral above is a true martingale and we get
		\begin{align*}
		E^P \Big[ \int_0^{u} &\langle \nabla \phi (s + t, \omega \ \widetilde{\otimes}_t \ X), d X_s\rangle \Big] = E^P \Big[ \int_0^{u} \langle \nabla \phi (s + t, \omega \ \widetilde{\otimes}_t \ X), b^P_{s} \rangle ds \Big],
		\end{align*}
		which implies that
		\begin{equation} \label{eq: after martingale part vanish}
		\begin{split}
		E^P \big[ \phi (u &+ t, \omega \ \widetilde{\otimes}_t \ X) - \phi (t, \omega) \big] 
		\\&= E^P \Big[ \int_0^{u} \p \phi (s + t, \omega \ \widetilde{\otimes}_t \ X ) d s 
		\\&\hspace{1.25cm} + \int_0^{u} \big(  \langle \nabla \phi (s + t, \omega \ \widetilde{\otimes}_t \ X), b^P_{s} \rangle + \tfrac{1}{2} \on{tr} \big[  \nabla^2 \phi (s + t, \omega \ \widetilde{\otimes}_t \ X) a^P_s \big] \big)  ds \Big].
		\end{split}
		\end{equation}
		For \( (u, \alpha) \in \of 0, T-t \gs\), we set 
		\begin{align*}
		\mathfrak{G} (u, \alpha) := \p \phi (u + t, \omega \ \widetilde{\otimes}_t\ \alpha) \ + &\ \sup \big\{  \langle \nabla \phi (u + t, \omega \ \widetilde{\otimes}_t\ \alpha) , b(f, u + t, \omega \ \widetilde{\otimes}_t\ \alpha) \rangle \\&\ \ + \tfrac{1}{2} \on{tr} \big[ \nabla^2 \phi (u + t, \omega \ \widetilde{\otimes}_t\ \alpha) a (f, u + t, \omega \ \widetilde{\otimes}_t\ \alpha) \big] \colon f \in F \big\}.
		\end{align*}
		From \eqref{eq: visco sub conseq of DPP} and \eqref{eq: after martingale part vanish}, we get that 
		\begin{align}\label{eq: subsolution main ineq}
		0 \leq \frac{1}{u} \int_0^u \sup_{P \in \cR(t, \omega)} E^P \big[ \mathfrak{G} (s, X) \big] ds, \quad u > 0.
		\end{align}
		We investigate the right hand side when \(u \searrow 0\). 
		\begin{lemma} \label{lem: Lebesgue theorem for G}
			\begin{align*}
			\frac{1}{u} \int_0^u \sup_{P \in \cR (t, \omega)} E^P \big[\mathfrak{G} (s, X) \big] ds \to \sup_{P \in \cR (t, \omega)} E^P \big[ \mathfrak{G}(0, X) \big], \quad u \searrow 0.
			\end{align*}
		\end{lemma}
		\begin{proof}
			By Lebesgue's differentiation theorem, it suffices to prove that the function \(s \mapsto \sup_{P \in \cR (t, \omega)} E^P [ \mathfrak{G} (s, X) ]\) is continuous. By the compactness of \(\cR (t, \omega)\), which follows from Theorem \ref{theo: upper hemi}, and Berge's maximum theorem (\cite[Theorem 17.31]{charalambos2013infinite}), continuity of \(s \mapsto \sup_{P \in \cR (t, \omega)} E^P [ \mathfrak{G} (s, X) ]\) is implied by the continuity of
			\[
			\cR (t, \omega) \times [0, T] \ni (P, s) \mapsto E^P\big[ \mathfrak{G} (s, X) \big].
			\]
			Take a sequence \((P^n, s^n)_{n \in \mathbb{Z}_+} \subset \cR (t, \omega) \times [0, T]\) such that \((P^n, s^n) \to (P^0, s^0)\). By Skorokhod's coupling theorem, on some probability space, there are random variables \((X^n)_{n \in \mathbb{Z}_+}\) with laws \((P^n)_{n \in \mathbb{Z}_+}\) such \(X^n \to X^0\) almost surely. Due to the compactness of~\(F\), the continuity assumptions on \(b\) and \(a\) and Corollary \ref{coro: real concatenation continuous}, we deduce from Berge's maximum theorem that \(\mathfrak{G}\) is continuous. Thus, we get a.s. 
			\(
			\mathfrak{G} (s^n, X^n) \to \mathfrak{G} (s^0, X^0).
			\)
			Thanks to the linear growth conditions on \(b\) and \(a\), the polynomial growth assumptions on the derivatives of \(\phi\), and Lemma \ref{lem: rel comp}, we notice that 
			\begin{align*}
			\sup_{n \in \mathbb{N}} E \big[ | \mathfrak{G} (s^n, X^n) |^2 \big] \leq C \Big(1 + \sup_{P \in \cR (t, \omega)} E^P \Big[ \sup_{r \in [0, T]} \|X_r\|^p \Big] \Big) < \infty,
			\end{align*}
			where \(p \geq 4\) is a suitable power (which depends on the polynomial bounds of the derivatives of \(\phi\)).
			This estimate shows that the sequence \((\mathfrak{G} (s^n, X^n))_{n \in \mathbb{N}}\) is uniformly integrable and we conclude that 
			\[
			E \big[ \mathfrak{G} (s^n, X^n) \big] \to E \big[ \mathfrak{G} (s^0, X^0) \big].
			\]
			This is the claimed continuity and therefore, the proof is complete.
		\end{proof}
		Finally, it follows from \eqref{eq: subsolution main ineq} and Lemma \ref{lem: Lebesgue theorem for G} that 
		\begin{align*}
		0 &\leq \sup_{P \in \cR (t, \omega)} E^P \big[ \mathfrak{G}(0, X) \big] 
		\\&= \p \phi (t, \omega) + \sup \big\{ \langle \nabla \phi (t, \omega), b(f, t, \omega) \rangle + \tfrac{1}{2} \on{tr} \big[ \nabla^2 \phi (t, \omega ) a (f, t, \omega) \big] \colon f \in F \big\}.
		\end{align*}
		This completes the proof of the subsolution property.
	\end{proof}

\subsection{Supersolution Property} \label{sec: pf supersolution}
	
	The next lemma is the central tool for the proof of the supersolution property.
	\begin{lemma} \label{lem: worst case}
		Suppose that the Conditions \ref{cond: LG} and \ref{cond: joint continuity in all} hold. 
		For every \((f, t, \omega) \in F \times \of 0, \infty\of\), there exists a probability measure \(\oP\in \cR(t, \omega)\) such that the \(\oP\)-characteristics of \(X\) (for its right-continuous natural filtration) have Lebesgue densities \((b (f, \cdot + t, \omega \ \widetilde{\otimes}_t \ X),\) \(a (f, \cdot + t, \omega \ \widetilde{\otimes}_t \ X))\).
	\end{lemma}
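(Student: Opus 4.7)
The plan is to realize $\overline{P}$ as the law of a weak solution to a path-dependent SDE with continuous, linear-growth coefficients, for which Skorokhod-type existence arguments apply.

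First I would transfer the coefficients to the canonical space. For fixed $(f,t,\omega)$, define
\[
\widetilde{b}(s,y) := b(f, s+t, \omega \ \widetilde{\otimes}_t \ y), \qquad \widetilde{a}(s,y) := a(f, s+t, \omega \ \widetilde{\otimes}_t \ y), \qquad (s,y) \in \mathbb{R}_+ \times \Omega.
\]
By Corollary \ref{coro: real concatenation continuous} the map $(s,y) \mapsto (s+t, \omega \ \widetilde{\otimes}_t \ y)$ is continuous into $\mathbb{R}_+ \times \Omega$, so Condition \ref{cond: joint continuity in all} yields joint continuity of $\widetilde{b}$ and $\widetilde{a}$. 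For linear growth, Condition \ref{cond: LG} together with the trivial estimate
\[
\sup_{r \in [0,s+t]} \|(\omega \ \widetilde{\otimes}_t\ y)(r)\| \leq \sup_{r\in[0,t]}\|\omega(r)\| + \|\omega(t)\| + \|y(0)\| + \sup_{r\in[0,s]}\|y(r)\|
\]
furnishes, for every $T>0$, a constant $C = C_{T,t,\omega} > 0$ with
\[
\|\widetilde{b}(s,y)\|^2 + \operatorname{tr}[\widetilde{a}(s,y)] \leq C \Bigl( 1 + \sup_{r \in [0,s]} \|y(r)\|^2\Bigr), \quad s \in [0,T].
\]

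Second, I would produce a continuous square root. The principal (symmetric) square root $\bS \ni A \mapsto A^{1/2} \in \bS$ is continuous, so setting $\widetilde{\sigma}(s,y) := \widetilde{a}(s,y)^{1/2}$ gives a jointly continuous, linear-growth matrix-valued map with $\widetilde{\sigma}\widetilde{\sigma}^* = \widetilde{a}$. Now I would invoke Skorokhod's existence theorem for (path-dependent) SDEs with continuous coefficients satisfying a linear growth bound: on a suitable filtered probability space there exist a Brownian motion $W$ and a continuous, adapted process $Y$ with $Y_0 = \omega(t)$ a.s.\ solving
\[
Y_s = \omega(t) + \int_0^s \widetilde{b}(r,Y)\, dr + \int_0^s \widetilde{\sigma}(r,Y)\, dW_r, \qquad s \geq 0.
\]
The classical construction proceeds via Euler/mollification approximations, whose tightness on $\Omega$ follows from the moment bound in Lemma \ref{lem: rel comp} (or a direct Gronwall estimate as in its proof), and whose limit points satisfy the SDE by the standard martingale-problem stability argument; this is the step that requires the most care in the path-dependent setting, but the joint continuity of $\widetilde{b},\widetilde{\sigma}$ together with the moment bound makes it routine.

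Finally, let $\overline{P} := \operatorname{Law}(Y) \in \mathfrak{P}(\Omega)$. Since $Y$ is a continuous semimartingale, $\overline{P} \in \mathfrak{P}_{\text{sem}}$. By It\^o's formula and the SDE, the $\overline{P}$-characteristics of the canonical process $X$ have Lebesgue densities $(\widetilde{b}(\cdot,X), \widetilde{a}(\cdot,X)) = (b(f,\cdot+t, \omega\ \widetilde{\otimes}_t\ X), a(f,\cdot+t, \omega\ \widetilde{\otimes}_t\ X))$, which in particular lie in $\Theta(\cdot + t, \omega \ \widetilde{\otimes}_t\ X)$ by the definition of $\Theta$. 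Combined with $\overline{P} \circ X_0^{-1} = \delta_{\omega(t)}$, this shows $\overline{P} \in \cR(t,\omega)$ with the prescribed characteristics, completing the proof.
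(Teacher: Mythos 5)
Your proposal is correct and follows essentially the same route as the paper: transfer the coefficients to the canonical space via the concatenation map, use Corollary \ref{coro: real concatenation continuous} together with Conditions \ref{cond: LG} and \ref{cond: joint continuity in all} to obtain joint continuity and linear growth, and then invoke Skorokhod's existence theorem for path-dependent SDEs. The only cosmetic difference is that the paper cites the existence theorem from Gikhman--Skorokhod directly (and explicitly invokes Lemma \ref{lem: jacod restatements} to move the characteristics to the canonical space), whereas you sketch the Euler/mollification tightness argument behind it and leave the transfer of the semimartingale property and its characteristics to the law on $\Omega$ somewhat implicit.
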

	\begin{proof}
		Thanks to the Conditions \ref{cond: LG} and \ref{cond: joint continuity in all}, and Corollary \ref{coro: real concatenation continuous}, the functions \((s, \alpha) \mapsto b(f, s + t, \omega \ \widetilde{\otimes}_t \ \alpha)\) and \((s, \alpha) \mapsto a (f, s + t, \omega \ \widetilde{\otimes}_t \ \alpha)\) are continuous and of linear growth (in the sense of (4) on p. 258 in \cite{skorokhod}). Hence, the claim follows from \cite[Theorem 4, p. 265]{skorokhod} and Lemma \ref{lem: jacod restatements}.
	\end{proof}

	\begin{lemma} \label{lem: supersolution}
		Suppose that the Conditions \ref{cond: LG}, \ref{cond: joint continuity in all} and \ref{cond: convexity} hold.     The value function \(v\) is a weak sense viscosity supersolution to \eqref{eq: PIDE}.
	\end{lemma}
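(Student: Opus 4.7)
The plan is to verify the two conditions defining a weak sense viscosity supersolution. The terminal inequality $v(T,\cdot) \geq \psi$ is immediate: every $P \in \cA(T,\omega)$ satisfies $P(X^T = \omega^T) = 1$, and $\psi$ is non-anticipative ($\psi(\omega) = \psi(\omega^T)$), so $E^P[\psi] = \psi(\omega)$; combined with Standing Assumption~\ref{SA: non empty}, we in fact obtain $v(T,\omega) = \psi(\omega)$. The bulk of the work lies in the subgradient inequality at interior points.

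Fix $(t,\omega) \in \of 0, T \of$ and $\phi \in C^{1,2}_{pol}(\of t, T\gs; \bR)$ touching $v$ from below at $(t,\omega)$, normalized so that $(v - \phi)(t,\omega) = 0$. For any $u \in (0, T-t)$, the DPP (Theorem~\ref{theo: DPP}) combined with $v \geq \phi$ on $\of t, T\gs$ and Corollary~\ref{coro: connection ElK NVH} yields
\[
0 \geq \sup_{P \in \cR(t,\omega)} E^P\big[\phi(u + t, \omega \ \widetilde{\otimes}_t\ X) - \phi(t,\omega)\big].
\]
In contrast to the subsolution proof, instead of working with an arbitrary $P \in \cR(t,\omega)$ and dealing with the supremum via Berge, here we select one well-chosen measure per control value. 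Concretely, for fixed $f \in F$ we invoke Lemma~\ref{lem: worst case} to produce $\overline{P}_f \in \cR(t,\omega)$ under which the canonical process $X$ has characteristics with Lebesgue densities $(b(f, \cdot + t, \omega \ \widetilde{\otimes}_t\ X),\, a(f, \cdot + t, \omega \ \widetilde{\otimes}_t\ X))$.

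With $\overline{P}_f$ in hand, we apply the pathwise It\^o formula exactly as in the proof of Lemma~\ref{lem: subsolution} (with Lemma~\ref{lem: derivatives} justifying that the composite test function is $C^{1,2}$ in the required sense). The local martingale part of the It\^o expansion is a true martingale thanks to the linear-growth bounds on $b, a$, the polynomial growth of $\nabla \phi$, and the moment bound from Lemma~\ref{lem: rel comp}. Hence
\[
0 \geq \frac{1}{u} \int_0^u E^{\overline{P}_f}\big[ \mathfrak{G}_f(s, X) \big]\, ds,
\]
where $\mathfrak{G}_f(s, \alpha)$ collects the drift, second-order, and time-derivative terms at $(s + t, \omega \ \widetilde{\otimes}_t\ \alpha)$ with the fixed control $f$.

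The one mildly technical step is passing to the limit $u \searrow 0$. Since $\overline{P}_f$ is fixed, Berge's theorem is not needed: joint continuity of $b$ and $a$, the regularity of the derivatives of $\phi$, Corollary~\ref{coro: real concatenation continuous}, Skorokhod coupling, and the uniform integrability provided by Lemma~\ref{lem: rel comp} (along the lines of Lemma~\ref{lem: Lebesgue theorem for G}) imply that $s \mapsto E^{\overline{P}_f}[\mathfrak{G}_f(s, X)]$ is right-continuous at $0$. At $s = 0$, non-anticipativity of $\phi$ together with $\overline{P}_f(X_0 = \omega(t)) = 1$ forces
\[
\mathfrak{G}_f(0, X) = \p \phi(t, \omega) + \langle \nabla \phi(t, \omega), b(f, t, \omega) \rangle + \tfrac{1}{2} \on{tr}\big[\nabla^2 \phi(t, \omega)\, a(f, t, \omega)\big]
\]
$\overline{P}_f$-a.s., which is deterministic. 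Passing to the limit gives this quantity $\leq 0$ for every $f \in F$, and taking the supremum over $f$ yields $\p \phi(t, \omega) + G(t, \omega, \phi) \leq 0$, as desired. The only place demanding genuine care is the joint use of the moment bound and the linear-growth hypothesis to upgrade the local martingale to a true martingale and to pass to the limit under the expectation; everything else is essentially bookkeeping mirroring the subsolution argument.
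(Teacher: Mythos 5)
Your proposal is correct and follows essentially the same route as the paper: you reduce via the DPP and Corollary \ref{coro: connection ElK NVH}, fix a control $f$ and invoke Lemma \ref{lem: worst case} to select the measure $\overline{P}_f \in \cR(t,\omega)$ with prescribed densities $(b(f,\cdot+t,\omega\ \widetilde{\otimes}_t\ X),\, a(f,\cdot+t,\omega\ \widetilde{\otimes}_t\ X))$, apply the pathwise It\^o formula together with Lemma \ref{lem: derivatives} and the martingale/moment bounds, pass to the limit $u \searrow 0$ via the continuity established in the vein of Lemma \ref{lem: Lebesgue theorem for G}, and finally take the supremum over $f$. The paper's proof is a notational variant of yours (with $\mathfrak{K}$ in place of your $\mathfrak{G}_f$), so there is nothing further to add.
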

	\begin{proof}
		Similar to the proof of Lemma \ref{lem: subsolution}, it suffices to prove the supersolution property.
		Take \((t,\omega) \in \of 0, T \of \) and \( \phi \in C^{1,2}_{pol}(\of t,T \gs ; \bR)\) such that
		\[
		0 = (u-\phi)(t, \omega) = \inf \{ (u-\phi)(s,\omega') \colon (s,\omega') \in \of t,T \gs \}.
		\]
		Theorem~\ref{theo: DPP} and Corollary \ref{coro: connection ElK NVH} show that 
		\begin{equation} \label{eq: visco sup conseq of DPP}
		\begin{split}
		0 = \sup_{P \in \cA (t, \omega)} E^P \big[ v (u + t, X)  - v (t, \omega) \big] &\geq \sup_{P \in \cR( t, \omega) } E^P \big[ \phi (u + t, \omega \ \widetilde{\otimes}_t \ X) - \phi (t, \omega) \big].
		\end{split}
		\end{equation}
		Let us fix an \(f \in F\). By Lemma \ref{lem: worst case}, there exists a probability measure \(\oP\in \cR(t, \omega)\) such that the \(\oP\)-characteristics of \(X\) have Lebesgue densities \((b (f, \cdot + t, \omega \ \widetilde{\otimes}_t \ X), a (f, \cdot + t, \omega \ \widetilde{\otimes}_t \ X))\). As in the proof of Lemma \ref{lem: subsolution}, we get that 
		\begin{equation} \label{eq: after martingale part vanish super}
		\begin{split}
		E^{\oP} & \big[ \phi (u + t, \omega \ \widetilde{\otimes}_t \ X) - \phi (t, \omega) \big] 
		\\& \ = E^{\oP} \Big[ \int_0^{u} \big( \p \phi (s + t, \omega \ \widetilde{\otimes}_t \ X ) + \langle \nabla \phi (s + t, \omega \ \widetilde{\otimes}_t \ X), b (f, s + t, \omega \ \widetilde{\otimes}_t \ X) \rangle \big)d s 
		\\&\hspace{4cm} + \int_0^{u}  \tfrac{1}{2} \on{tr} \big[\nabla^2 \phi (s + t, \omega \ \widetilde{\otimes}_t \ X ) a (f, s + t, \omega \ \widetilde{\otimes}_t \ X) \big] ds \Big].
		\end{split}
		\end{equation}
		For \( (u, \alpha) \in \of 0, T-t \gs \), we set 
		\begin{align*}
		\mathfrak{K} (u, \alpha) := \p \phi (u + t, \omega \ \widetilde{\otimes}_t\ \alpha) &+ \langle \nabla \phi (u + t, \omega \ \widetilde{\otimes}_t\ \alpha) ,b(f, u + t, \omega \ \widetilde{\otimes}_t\ \alpha) \rangle \\&+ \tfrac{1}{2} \on{tr} \big[ \nabla^2 \phi (u + t, \omega \ \widetilde{\otimes}_t\ \alpha) a (f, u + t, \omega \ \widetilde{\otimes}_t\ \alpha) \big].
		\end{align*}
		With \eqref{eq: visco sup conseq of DPP} and \eqref{eq: after martingale part vanish super}, we obtain that 
		\[
		0 \geq \frac{1}{u} \int_0^u E^{\oP} \big[ \mathfrak{K} (s, X) \big] ds, \quad u > 0.
		\]
		As \(s \mapsto E^{\oP}[ \mathfrak{K} (s, X)]\) is continuous (see the proof of Lemma~\ref{lem: Lebesgue theorem for G}), we conclude, with \(u \searrow 0\), that 
		\begin{align*}
		0 \geq \frac{1}{u} \int_0^u E^{\oP} \big[ \mathfrak{K} (s, X) \big] ds &\to E^{\oP} \big[ \mathfrak{K} (0, X) \big] 
		\\&= \p \phi (t, \omega) + \langle \nabla \phi (t, \omega), b(f, t, \omega) \rangle + \tfrac{1}{2} \on{tr} \big[\nabla^2 \phi (t, \omega ) a (f, t, \omega) \big].
		\end{align*}
		As \(f \in F\) is arbitrary, taking the \(\sup\) over all \(f \in F\) shows that \(v\) is a weak sense viscosity supersolution. The proof is complete.
	\end{proof}

\subsection{Proof of Theorem \ref{theo: viscosity}} \label{sec: pf viscosity}
It follows from Lemmata \ref{lem: subsolution} and \ref{lem: supersolution} that \(v\) is a weak sense viscosity solution to the PPDE \eqref{eq: PIDE}. We now discuss its regularity properties. Recall that \(\psi\) is bounded and continuous.
Thanks to Corollary \ref{coro: real concatenation continuous}, the map 
\[
\Omega \times \Omega \times [0, T]  \ni (\omega, \alpha, t) \mapsto \psi (\omega \ \widetilde{\otimes}_t \ \alpha)
\]
is continuous. Thus, by \cite[Theorem 8.10.61]{bogachev}, the map 
\begin{align} \label{eq: joint continuity value function proof}
\of0, T\gs \times \mathfrak{P}(\Omega) \ni (t, \omega, P) \mapsto E^{P}\big[\psi(\omega \ \widetilde{\otimes}_t \ X) \big]\end{align}
is continuous, too.
Furthermore, by Theorem \ref{theo: upper hemi}, \(\cR\) is compact-valued.
From Corollary~\ref{coro: connection ElK NVH} we get that
\[
v (t, \omega) = \sup_{P \in \cR(t, \omega)} E^{P}\big[\psi(\omega \ \widetilde{\otimes}_t \ X) \big], \quad (t, \omega) \in \of 0, T\gs.
\]
Hence, under the continuity hypothesis on \(\cR\), the continuity (upper, lower semicontinuity) of \(v\) follows from the continuity of \eqref{eq: joint continuity value function proof} and Berge's maximum theorem (\cite[Theorem~17.31, Lemmata~17.29, 17.30]{charalambos2013infinite}). The proof is complete.
\qed

\section{The Regularity of \(\cR\) and \(v\): Proofs of Theorems \ref{theo: upper hemi}, \ref{theo: lower hemi} and \ref{theo: holder estimate value function}} \label{sec: pf regularity r}

In the following section we prove the Theorems \ref{theo: upper hemi}, \ref{theo: lower hemi} and \ref{theo: holder estimate value function}. By a careful refinement of some key arguments, Theorem \ref{theo: upper hemi} extends \cite[Proposition~3.8]{CN22b} beyond the Markovian case.
	Using the implicit function theorem, strong existence properties of stochastic differential equations with random coefficients and Gronwall arguments, we establish Theorem \ref{theo: lower hemi} that is seemingly the first result regarding lower hemicontinuity of the correspondence \((t, \omega) \mapsto \cR (t, \omega)\) and thus the first result on lower semicontinuity of the value function in a fully path-dependent framework related to nonlinear stochastic processes.
	The proof for Theorem \ref{theo: holder estimate value function} on the uniform continuity of \(v\) with respect to \(d\) combines some observations made in the proof for the lower hemicontinuity of \(\cR\) with an application of the DPP and a Gronwall argument, cf. also the proof of \cite[Lemma 3.6]{tang}.

\subsection{Proof of Theorem \ref{theo: upper hemi}}
To prove Theorem \ref{theo: upper hemi}, we use the sequential characterization of upper hemicontinuity as given by \cite[Theorem 17.20]{charalambos2013infinite}.
Namely, by \cite[Theorem 17.20]{charalambos2013infinite}, \(\cR\) is upper hemicontinuous with compact values if and only if for all sequences \((t^n, \omega^n)_{n \in \mathbb{N}} \subset \of 0, \infty\of\) and \((P^n)_{n \in \mathbb{N}} \subset \mathfrak{P}(\Omega)\) such that \((t^n, \omega^n) \to (t, \omega) \in \of 0, \infty\of\) and \(P^n \in \cR (t^n, \omega^n)\), the sequence \((P^n)_{n \in \mathbb{N}}\) has an accumulation point in \(\cR (t, \omega)\). 
 
From now on, take \((t^n, \omega^n)_{n \in \mathbb{N}} \subset \of 0, \infty\of\) and \((P^n)_{n \in \mathbb{N}} \subset \mathfrak{P}(\Omega)\) such that \((t^n, \omega^n) \to (t, \omega) \in \of 0, \infty\of\) and \(P^n \in \cR (t^n, \omega^n)\).
		Thanks to Lemma \ref{lem: rel comp}, the set \(\bigcup_{ n \in \mathbb{N} } \cR (t^n, \omega^n)\) is relatively compact. Hence, up to passing to a subsequence, we can assume that the sequence \((P^n)_{n \in \mathbb{N}}\) converges weakly to a limiting measure \(P\). In the following we prove that \(P \in \cR (t, \omega)\). Clearly, as \(P^n \circ X_0^{-1} \to P \circ X^{-1}_0\) and \(P^n \circ X^{-1}_0 = \delta_{\omega^n (t^n)} \to \delta_{\omega (t)}\), we have \(P \circ X^{-1}_0 = \delta_{\omega (t)}\). Thus, we need to show that \(P\in \fPas\) with differential characteristics \((\llambda \otimes P)\)-a.e. in \(\Theta (\cdot + t, \omega \ \widetilde{\otimes}_t\ X)\).
		The proof of this is split into four steps.

  Before we start our program, we need a last bit of notation.
  For each \(n \in \mathbb{N}\), denote the \(P^n\)-characteristics of \(X\) by \((B^n, C^n)\).		
  Set \(\Omega^* := \Omega \times \Omega \times C(\bR_+; \bR^{\d \times \d})\) and denote the coordinate process on \(\Omega^*\) by \(Y = (Y^{(1)}, Y^{(2)}, Y^{(3)})\). Further, set \(\cF^* := \sigma (Y_s, s \geq 0)\) and let \(\F^* = (\cF^*_s)_{s \geq 0}\) be the right-continuous filtration generated by \(Y\).
		
		\emph{Step 1.} First, we show that the family \(\{P^n \circ (X, B^n, C^n)^{-1} \colon n \in \mathbb{N}\}\) is tight (when seen as a sequence of probability measures on the measurable space \((\Omega^*, \cF^*)\)). 
		Since \(P^n \to P\), it suffices to prove tightness of \(\{P^n \circ (B^n, C^n)^{-1} \colon n \in \mathbb{N}\}\). We use Aldous' tightness criterion (\cite[Theorem~VI.4.5]{JS}), i.e., we show the following two conditions:
		\begin{enumerate}
			\item[(a)]
			for every \(T, \varepsilon > 0\), there exists a \(K \in \mathbb{R}_+\) such that
			\[
			\sup_{n \in \mathbb{N}} P^n \Big( \sup_{s \in [0, T]} \|B^n_s\| + \sup_{s \in [0, T]} \on{tr} \big[ C^n_s \big] \geq K \Big) \leq \varepsilon;
			\]
			\item[(b)] 
			for every \(T, \varepsilon > 0\),
			\[
			\lim_{\theta \searrow 0} \limsup_{n \to \infty} \sup \big\{P^n (\|B^n_L - B^n_S\| + \|C^n_L - C^n_S\|_o \geq \varepsilon) \big\} = 0, 
			\]
			where the \(\sup\) is taken over all stopping times \(S, L \leq T\) such that \(S \leq L \leq S + \theta\).
		\end{enumerate}
		By Lemma \ref{lem: rel comp}, we have
		\begin{align} \label{eq: second moment bound}
		\sup_{n \in \mathbb{N}} E^{P^n} \Big[ \sup_{s \in [0, T]} \|X_s\|^2 \Big] 
		< \infty.
		\end{align}
		Using the linear growth assumption, we also obtain that \(P^n\)-a.s.
		\[
		\sup_{s \in [0, T]} \|B^n_s\| + \sup_{s \in [0, T]} \on{tr} \big[C^n_s \big] \leq \mathsf{C} \Big( 1 + \sup_{s \in [0, T]} \|X_s\|^2 \Big),
		\]
		where the constant \(\mathsf{C} >0\) is independent of \(n\).
		By virtue of \eqref{eq: second moment bound}, this bound yields~(a). For (b), take two stopping times \(S, L \leq T\) such that \(S \leq L \leq S + \theta\) for some \(\theta > 0\). Then, using again the linear growth assumptions, we get \(P^n\)-a.s.
		\[
		\|B^n_L - B^n_S\| + \|C^n_L - C^n_S\|_o \leq \mathsf{C} (L - S) \Big( 1 + \sup_{s \in [0, T]} \|X_s\|^2 \Big) \leq \mathsf{C} \theta \Big( 1 + \sup_{s \in [0, T]} \|X_s\|^2 \Big),
		\]
		which yields (b) by virtue of \eqref{eq: second moment bound}. We conclude that \(\{P^n \circ (X, B^n, C^n)^{-1} \colon n \in \mathbb{N}\}\) is tight. 
		Up to passing to a subsequence, from now on we assume that \(P^n \circ (X, B^n, C^n)^{-1} \to \Q\) weakly.
		
\emph{Step 2.} Next, we show that \(Y^{(2)}\) and \(Y^{(3)}\) are \(Q\)-a.s. locally absolutely continuous.
For \(M > 0\) and \(\omega \in \Omega\), define 
\[
\rho_M (\omega) := \inf \{s \geq 0 \colon \|\omega(s)\| \geq M\} \wedge M.
\]
Furthermore, for \(\omega = (\omega^{(1)}, \omega^{(2)}) \in \Omega \times \Omega\), we set 
\[
\zeta_M (\omega) := \sup \Big\{ \frac{\|\omega^{(2)} (u \wedge \rho_M(\omega)) - \omega^{(2)}(s \wedge \rho_M(\omega))\|}{u - s} \colon 0 \leq s < u\Big\}.
\]
Similar to the proof of \cite[Lemma 3.6]{CN22b}, we obtain the existence of a dense set \(D \subset \bR_+\) such that, for every \(M \in D\), the map \(\zeta_M\) is \(\Q \circ (Y^{(1)}, Y^{(2)})^{-1}\)-a.s. lower semicontinuous. By the linear growth conditions and the definition of \(\rho_M\), for every \(M \in D\), there exists a constant \(\mathsf{C} = \mathsf{C} (M) > 0\) such that \(P^n(\zeta_M (X, B^n) \leq \mathsf{C}) = 1\) for all \(n \in \mathbb{N}\). As \(\zeta_M\) is \(\Q \circ (Y^{(1)}, Y^{(2)})\)-a.s.  lower semicontinuous, \cite[Example~17, p. 73]{pollard} yields that 
\[
0 = \liminf_{n \to \infty} P^n (\zeta_M (X, B^n) > \mathsf{C}) \geq \Q (\zeta_M (Y^{(1)}, Y^{(2)}) > \mathsf{C}). 
\]
Further, as \(D\) is dense in \(\bR_+\), we obtain that \(Q\)-a.s. \(Y^{(2)}\) is locally Lipschitz continuous, i.e., in particular locally absolutely continuous. Similarly, we get that \(Y^{(3)}\) is \(Q\)-a.s. locally Lipschitz and hence, locally absolutely continuous. 
		
		\emph{Step 3.} 
		Define the map \(
		\Phi \colon \Omega^* \to \Omega
		\)
		by \(\Phi (\omega^{(1)}, \omega^{(2)}, \omega^{(3)}) := \omega^{(1)}\). Clearly, \(Q \circ \Phi^{-1} = P\) and \(Y^{(1)} = X \circ \Phi\). 
		In this step, we prove that \((\llambda \otimes Q)\)-a.e. \((dY^{(2)} /d \llambda, dY^{(3)}/ d \llambda) \in \Theta (\cdot + t, \omega \ \widetilde{\otimes}_t\ X) \circ \Phi\). 
		By \cite[Lemma 3.2]{CN22b} and the Conditions \ref{cond: joint continuity in all} and \ref{cond: convexity}, the correspondence \(\Theta\) is continuous with compact and convex values. Furthermore, again by Condition~\ref{cond: joint continuity in all}, the set \(\Theta ([s, s + 1], \omega^o)\) is compact for every \((s, \omega^o) \in \of 0, \infty\of\). Consequently, as in completely metrizable locally convex spaces the closed convex hull of a compact set is itself compact (\cite[Theorem 5.35]{charalambos2013infinite}), we conclude that \(\oconv \Theta ([s, s + 1], \omega^o)\) is compact and, by \cite[Lemma~3.4]{CN22b}, we get that
		\begin{align} \label{eq: conseq upper hemi}
		\bigcap_{m \in \mathbb{N}} \oconv \Theta ([s, s + 1/m], \omega^o) \subset \Theta (s, \omega^o)
		\end{align} 
		for all \((s, \omega^o) \in \of 0, \infty \of\). Here, \(\oconv\hspace{-0.075cm}\) denotes the closure of the convex hull.
		By virtue of \cite[Corollary~8, p.~48]{diestel}, \(P^n\)-a.s. for all \(s \in \bR_+\), we have 
		\begin{equation}\label{eq: P as inclusion theta}
		\begin{split}
		m (B^n_{s + 1/m} - B^n_s, C^n_{s + 1/m} - C^n_s) &\in \oconv ( dB^n / d \llambda, d C^n / d \llambda) ([ s, s + 1/m ]) \\&\subset \oconv \Theta ([t^n + s, t^n + s + 1/m], \omega^n \ \widetilde{\otimes}_{t_n}\ X).
		\end{split}
		\end{equation}
		Thanks to Skorokhod's coupling theorem, with a little abuse of notation, there exist random variables \[(X^0, B^0, C^0), (X^1, B^1, C^1), (X^2, B^2, C^2), \dots,\] defined on some probability space \((\Sigma, \mathcal{G}, R)\), such that \((X^0, B^0, C^0)\) has distribution \(Q\), \((X^n, B^n, C^n)\) has distribution \(P^n\circ (X, B^n, C^n)^{-1}\) and \(R\)-a.s. \((X^n, B^n, C^n) \to (X^0, B^0, C^0)\) in the local uniform topology. 
		Thanks to Condition \ref{cond: joint continuity in all}, \cite[Lemmata 3.2, 3.3]{CN22b} and \cite[Theorem 17.23]{charalambos2013infinite}, the correspondence \((s, \omega^o) \mapsto \Theta ([s, s + 1/m], \omega^o)\) is continuous. Since, for every \((s, \omega^o) \in \bR_+ \times \Omega\), the set \(\oconv \Theta ([s, s + 1/m], \omega^o)\) is compact, we deduce from \cite[Theorem 17.35]{charalambos2013infinite} that the correspondence \((s, \omega^o) \mapsto \oconv \Theta ([s, s + 1/m], \omega^o)\) is upper hemicontinuous with compact values. 
		Thus, \eqref{eq: P as inclusion theta}, \cite[Theorem~17.20]{charalambos2013infinite} and Corollary \ref{coro: real concatenation continuous} yield that, for all \(m \in \mathbb{N}\), \(R\)-a.s. for all \(s \in \bR_+\)
		\[
		m (B^0_{s + 1/m} - B^0_s, C^0_{s + 1/m} - C^0_s) \in \oconv \Theta ([s+ t, s + 1/m + t], \omega \ \widetilde{\otimes}_{t}\ X^0).
		\]
		Notice that \((\llambda \otimes R)\)-a.e.
		\[
		(d B^0 / d \llambda, d C^0 / d \llambda) =\lim_{m \to \infty} m (B^0_{\cdot + 1/m} - B^0_\cdot, C^0_{\cdot + 1/m} - C^0_\cdot).
		\]
		Now, with \eqref{eq: conseq upper hemi}, we get that \(R\)-a.s. for \(\llambda\)-a.a. \(s \in \bR_+\)
		\[
		(d B^0 / d \llambda, d C^0 / d \llambda) (s) \in \bigcap_{m \in \mathbb{N}} \oconv \Theta ([s + t, s + 1/m+ t], \omega \ \widetilde{\otimes}_{t}\ X^0) \subset \Theta (s + t, \omega \ \widetilde{\otimes}_{t}\ X^0).
		\]
		This shows that \( (\llambda \otimes Q)\)-a.e. \((dY^{(2)} /d \llambda, dY^{(3)}/ d \llambda) \in \Theta (\cdot + t, \omega \ \widetilde{\otimes}_t\ X) \circ \Phi\). 
		
		\emph{Step 4.} In the final step of the proof, we show that \(P \in \fPas\) and we relate \((Y^{(2)}, Y^{(3)})\) to the \(P\)-semimartingale characteristics of the coordinate process. 
		Thanks to \cite[Lemma~11.1.2]{SV}, there exists a dense set \(D \subset \bR_+\) such that \(\rho_M \circ \Phi\) is \(Q\)-a.s. continuous for all \(M \in D\). Take some \(M \in D\). Since \(P^n \in \fPas\), it follows from the definition of the first characteristic that the process \(X_{\cdot \wedge \rho_M} - B^n_{\cdot \wedge \rho_M}\) is a local \(P^n\)-\(\F_+\)-martingale. Furthermore, by the definition of the stopping time \(\rho_M\) and the linear growth Condition~\ref{cond: LG}, we see that \(X_{\cdot \wedge \rho_M} - B^n_{\cdot \wedge \rho_M}\) is \(P^n\)-a.s. bounded by a constant independent of~\(n\), which, in particular, implies that it is a true \(P^n\)-\(\F_+\)-martingale. Now, it follows from \cite[Proposition~IX.1.4]{JS} that \(Y^{(1)}_{\cdot \wedge \rho_M \circ \Phi} - Y^{(2)}_{\cdot \wedge \rho_M \circ \Phi}\) is a \(Q\)-\(\F^*\)-martingale. Recalling that \(Y^{(2)}\) is \(Q\)-a.s. locally absolutely continuous by Step 2, this means that \(Y^{(1)}\) is a \(Q\)-\(\F^*\)-semimartingale with first characteristic \(Y^{(2)}\). Similarly, we see that the second characteristic is given by~\(Y^{(3)}\). Finally, we need to relate these observations to the probability measure \(P\) and the filtration \(\F_+\). We denote by \(A^{p, \Phi^{-1}(\F_+)}\) the dual predictable projection of a process~\(A\), defined on \((\Omega^*, \cF^*)\), to the filtration \(\Phi^{-1}(\F_+)\). Recall from \cite[Lemma 10.42]{jacod79} that, for every \(s \in \bR_+\), a random variable \(Z\) on \((\Omega^*, \cF^*)\) is \(\Phi^{-1}(\cF_{s+})\)-measurable if and only if it is \(\cF^*_s\)-measurable and \(Z (\omega^{(1)}, \omega^{(2)}, \omega^{(3)})\) does not depend on \((\omega^{(2)}, \omega^{(3)})\).
		Thanks to Stricker's theorem (see, e.g., \cite[Lemma~2.7]{jacod80}), \(Y^{(1)}\) is a \(Q\)-\(\Phi^{-1} (\F_+)\)-semimartingale. 
		Notice that each \(\rho_M \circ \Phi\) is a \(\Phi^{-1}(\F_+)\)-stopping time and recall from Step~3 that \( (\llambda \otimes Q)\)-a.e. \((d Y^{(2)}/ d \llambda, d Y^{(3)}/ d \llambda) \in \Theta (\cdot + t, \omega \ \widetilde{\otimes}_t\ X) \circ \Phi\). Hence, by definition of \(\rho_M\) and the linear growth assumption, for every \(M \in D\) and \(i, j = 1, \dots, \d\), we have
		\begin{align*}
		E^Q \big[ \on{Var} (Y^{(2, i)})_{\rho_M \circ \Phi} \big] &+ E^Q \big[ \on{Var}(Y^{(3, ij)})_{\rho_M \circ \Phi} \big] \\&= E^Q \Big[ \int_0^{\rho_M \circ \Phi} \Big(\Big| \frac{d Y^{(2,i)}}{d \llambda} \Big| + \Big| \frac{d Y^{(3,ij)}}{d \llambda} \Big| \Big) d \llambda \Big] < \infty,
		\end{align*}
		where \(\on{Var} (\cdot)\) denotes the variation process.
		By virtue of this, we get from \cite[Proposition~9.24]{jacod79} that the \(Q\)-\(\Phi^{-1}(\F_+)\)-characteristics of \(Y^{(1)}\) are given by \(((Y^{(2)})^{p, \Phi^{-1}(\F_+)},\) \((Y^{(3)})^{p, \Phi^{-1}(\F_+)})\). 
		Hence, thanks to Lemma \ref{lem: jacod restatements}, the coordinate process \(X\) is a \(P\)-\(\F_+\)-semimartingale whose characteristics \((B^P, C^P)\) satisfy \(Q\)-a.s.
		\[(B^P, C^P) \circ \Phi = ((Y^{(2)})^{p, \Phi^{-1}(\F_+)}, (Y^{(3)})^{p, \Phi^{-1}(\F_+)}).\] Consequently, we deduce from the Steps~2 and 3, and \cite[Theorem 5.25]{HWY}, that \(P\)-a.s. \((B^P, C^P) \ll \llambda\) and 
		\begin{align*}
		(\llambda \otimes P) \big( (&d B^P / d \llambda, d C^P / d \llambda) \not \in \Theta (\cdot + t, \omega \ \widetilde{\otimes}_t\ X) \big) 
		\\&= (\llambda \otimes Q \circ \Phi^{-1}) \big( (d B^P / d \llambda, d C^P / d \llambda) \not \in \Theta (\cdot + t, \omega \ \widetilde{\otimes}_t\ X) \big)
		\\&= (\llambda \otimes Q) \big( E^Q [(d Y^{(2)} / d \llambda, d Y^{(3)} / d \llambda) | \Phi^{-1} (\F_+)_-] \not \in \Theta (\cdot + t, \omega \ \widetilde{\otimes}_t\ Y^{(1)}) \big) = 0,
		\end{align*}
		where we use \cite[Corollary 8, p. 48]{diestel} for the final equality. 
 \qed
	
	\subsection{Proof of Theorem \ref{theo: lower hemi}}
	To prove Theorem \ref{theo: lower hemi}, we use the sequential characterization of lower hemicontinuity as given by \cite[Theorem 17.21]{charalambos2013infinite}. Namely, we prove that for every sequence \((t^n, \omega^n)_{n \in \mathbb{Z}_+} \subset \of 0, \infty\of\) such that \((t^n, \omega^n) \to (t^0, \omega^0)\) and every \(P \in \cR (t^0, \omega^0)\), there exists 
	a sequence \(P^{n} \in \cR (t^{n}, \omega^{n})\), for \(n \in \mathbb{N}\), such that \(P^{n} \to P\) weakly. 
	From now on, we fix \((t^n, \omega^n)_{n \in \mathbb{Z}_+} \subset \of 0, \infty\of\) such that \((t^n, \omega^n) \to (t^0, \omega^0)\) and \(P \in \cR (t^0, \omega^0)\).
	
	{\em Step 1: On the structure of \(P\).}
	Recall that \(\mathscr{P}\) denotes the predictable \(\sigma\)-field on \(\of 0, \infty \of\) and let \((b^P, a^P)\) be the Lebesgue densities of the \(P\)-\(\F\)-characteristics of the coordinate process \(X\). 
	\begin{lemma} \label{lem: representation via control}
		Assume that \(F\) is a compact metrizable space and that Condition \ref{cond: continuity control} holds. Then, for every \((t^0, \omega^0) \in \of 0, \infty\of\) and every \(P \in \cR (t^0, \omega^0)\), there exists a predictable map \(\f = \f (P) \colon \of 0, \infty\of \hspace{0.05cm} \to F\) such that, for \((\llambda\otimes P)\)-a.e. \((t, \omega) \in \of 0, \infty\of\),
		\[
		(b^P, a^P) (t, \omega)= ( b (\f (t, \omega),  t + t^0,\omega^0 \ \widetilde{\otimes}_{t^0}\ \omega), a (\f (t, \omega), t + t^0, \omega^0 \ \widetilde{\otimes}_{t^0}\ \omega) ),
		\]
		where \((b^P, a^P)\) denote the Lebesgue densities of the \(P\)-\(\mathbf{F}\)-characteristics of the coordinate process \(X\).
	\end{lemma}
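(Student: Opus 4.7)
This is a predictable measurable selection problem. Fix $(t^0, \omega^0) \in \of 0, \infty\of$ and $P \in \cR(t^0, \omega^0)$, and set
\[
g(f, t, \omega) := \big( b(f, t + t^0, \omega^0 \ \widetilde{\otimes}_{t^0}\ \omega),\ a(f, t + t^0, \omega^0 \ \widetilde{\otimes}_{t^0}\ \omega) \big).
\]
By the very definition of $\cR(t^0, \omega^0)$, the pair $(b^P, a^P)(t, \omega)$ lies $(\llambda \otimes P)$-a.e.\ in $g(F, t, \omega)$; the task is therefore to predictably select a preimage.

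To that end, I would introduce the correspondence
\[
\Psi(t, \omega) := \big\{ f \in F \colon g(f, t, \omega) = (b^P(t, \omega), a^P(t, \omega)) \big\}
\]
and verify three properties: \emph{(a)} non-emptiness on a set of full $(\llambda \otimes P)$-measure, which is immediate from $P \in \cR(t^0, \omega^0)$ together with Standing Assumption \ref{SA: meas gr}; \emph{(b)} closed, hence compact, values, since Condition \ref{cond: continuity control} makes $f \mapsto g(f, t, \omega)$ continuous so that $\Psi(t, \omega)$ is the preimage of a singleton in the compact metrizable space $F$; and \emph{(c)} $\mathscr{P} \otimes \mathcal{B}(F)$-measurability of the graph $\on{gr}\Psi$. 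For \emph{(c)}, $g$ is a Carath\'eodory function with respect to the predictable $\sigma$-field $\mathscr{P}$: continuity in $f$ is Condition \ref{cond: continuity control}, while $\mathscr{P}$-measurability in $(t, \omega)$ for each fixed $f$ follows from the fact that the shift-and-concatenation $(t, \omega) \mapsto (t + t^0, \omega^0 \ \widetilde{\otimes}_{t^0}\ \omega)$ pulls $\mathscr{P}$ back into itself, which is routine since for fixed $\omega^0$ the concatenation is a continuous adapted map $\Omega \to \Omega$ and time shifts preserve predictability. Combined with the predictability of $(b^P, a^P)$, the graph is then a zero set of a Carath\'eodory function and the predictable analogue of Lemma \ref{lem: correspondence meas graph general result} applies.

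Given \emph{(a)}--\emph{(c)}, the Kuratowski--Ryll-Nardzewski selection theorem (cf.\ \cite{charalambos2013infinite}) yields a $\mathscr{P}$-measurable selector from $\Psi$ on the predictable set where it is non-empty, and extending by an arbitrary constant value $f_\star \in F$ on the $(\llambda \otimes P)$-null complement produces the desired predictable map $\f$. The only real subtlety throughout is the insistence on the predictable $\sigma$-field rather than on joint Borel measurability; once this is carefully tracked, the argument is standard and no deeper analytic difficulty arises.
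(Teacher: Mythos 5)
Your proposal is correct and follows essentially the same route as the paper: introduce the preimage correspondence of the densities under the Carath\'eodory map, observe that it is compact-valued with $\mathscr{P}\otimes\mathcal{B}(F)$-measurable graph, and invoke a measurable selection theorem. The one point where the two arguments diverge is in handling the fact that $(b^P,a^P)\in\Theta(\cdot+t^0,\omega^0\ \widetilde{\otimes}_{t^0}\ X)$ only holds $(\llambda\otimes P)$-a.e.: you restrict to the set $\{\Psi\neq\emptyset\}$ and extend by a constant off it, which tacitly requires that this set is $\mathscr{P}$-measurable (it is, by the projection theorem for compact-section measurable sets, but this deserves an explicit word), whereas the paper instead replaces the target $(b^P,a^P)$ by a modified map $\pi$ that equals a fixed element of $\Theta$ off the predictable set $G:=\{(b^P,a^P)\in\Theta(\cdot+t^0,\omega^0\ \widetilde{\otimes}_{t^0}\ X)\}$ and then directly applies the measurable implicit function theorem \cite[Theorem~18.17]{charalambos2013infinite}, sidestepping the non-emptiness measurability question. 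Both variants work; the paper's $\pi$-trick keeps the selection step a one-line citation, while yours is more direct but should spell out why $\{\Psi\neq\emptyset\}\in\mathscr{P}$ and should also pin down the predictability of $(t,\omega)\mapsto b(f,t+t^0,\omega^0\ \widetilde{\otimes}_{t^0}\ \omega)$ (for fixed $f$) with a precise reference rather than calling it routine --- the paper cites \cite[Theorem~97, p.~147]{dellacheriemeyer} for exactly this.
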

	\begin{proof}
		By virtue of \cite[Theorem 97, p. 147]{dellacheriemeyer}, the maps \((t, \omega) \mapsto b (f, t + t^0, \omega^0 \ \widetilde{\otimes}_{t^0}\ \omega)\) and \((t, \omega) \mapsto a (f, t + t^0, \omega^0 \ \widetilde{\otimes}_{t^0}\ \omega)\) are predictable for every \(f \in F\). Hence, 
		by Lemma \ref{lem: correspondence meas graph general result} and Condition~\ref{cond: continuity control}, the correspondence \((t, \omega) \mapsto \Theta^* (t, \omega) := \Theta (t + t^0, \omega^0 \ \widetilde{\otimes}_{t^0}\ \omega)\) has a
		\( \mathscr{P} \otimes \mathcal{B}(\bR) \otimes \mathcal{B}(\mathbb{R}_+)\)-measurable graph and consequently, 
		\begin{align*}
		G :\hspace{-0.1cm}&= \big\{ (t, \omega) \in \of 0, \infty\of \colon (b^P_t (\omega), a^P_t (\omega)) \in \Theta ( t + t^0, \omega^0 \ \widetilde{\otimes}_{t^0}\ \omega )\big\}
		\\&= \big\{ (t, \omega) \in \of 0, \infty\of \colon (t, \omega, b^P_t (\omega), a^P_t (\omega)) \in \on{gr} \Theta^* \big\} 
		\in \mathscr{P}.
		\end{align*}
		For some fixed, but arbitrary, \(f_0 \in F\), define 
		\[
		\pi (t, \omega) := \begin{cases} (b (f_0, t + t^0,\omega^0 \ \widetilde{\otimes}_{t^0}\ \omega), a (f_0, t + t^0,\omega^0 \ \widetilde{\otimes}_{t^0}\ \omega)),& \text{if } (t, \omega) \not \in G,\\
		(b^P_t (\omega), a^P_t (\omega)),& \text{if } (t, \omega) \in G. \end{cases}
		\]
		As the function \((f, t, \omega) \mapsto (b (f, t + t^0, \omega^0 \ \widetilde{\otimes}_{t^0}\ \omega), a (f, t + t^0, \omega^0 \ \widetilde{\otimes}_{t^0}\ \omega))\) is continuous in the \(F\) and \(\mathscr{P}\)-measurable in the \(\of 0, \infty\of\) variable, we deduce from the measurable implicit function theorem \cite[Theorem~18.17]{charalambos2013infinite} that the correspondence \(\gamma \colon \of 0, \infty\of \hspace{0.075cm} \twoheadrightarrow F\) defined by 
		\[
		\gamma (t, \omega) := \big\{ f \in F \colon (b (f, t + t^0, \omega^0 \ \widetilde{\otimes}_{t^0}\ \omega), a (f, t + t^0,\omega^0 \ \widetilde{\otimes}_{t^0}\ \omega)) = \pi (t, \omega) \big\}
		\]
		is \(\mathscr{P}\)-measurable and that it admits a measurable selector, i.e., there exists a \(\mathscr{P}\)-measurable function \(\f \colon \of 0, \infty\of\hspace{0.075cm} \to F\) such that \[\pi (t, \omega) = (b (\f(t, \omega), t + t^0,\omega^0 \ \widetilde{\otimes}_{t^0}\ \omega),a (\f(t, \omega), t + t^0,\omega^0 \ \widetilde{\otimes}_{t^0}\ \omega))\] for all \((t, \omega) \in \of 0, \infty\of\). Since \(P \in \cR(t, \omega)\), we have \((\llambda \otimes P)\)-a.e. \(\pi = (b^P, a^P)\). This completes the proof.
	\end{proof}
	Let \(\f = \f (P)\) be as in Lemma \ref{lem: representation via control}.
	As Condition \ref{cond: lipschitz} is assumed in Theorem \ref{theo: lower hemi}, we have a decomposition \(a = \sigma \sigma^*\) for an \(\bR^{\d \times \dr}\)-valued function \(\sigma\).
	By a standard integral representation result for continuous local martingales (see, e.g., \cite[Theorem II.7.1\('\)]{IW}), possibly on an extension of the filtered probability space \((\Omega, \cF, \F, P)\), there exists an \(\dr\)-dimensional standard Brownian motion \(W\) such that \(P\)-a.s.
	\[
	d X_t = b (\f (t, X), t + t^0, \omega^0 \ \widetilde{\otimes}_{t^0}\ X) d t + \sigma(\f (t, X), t + t^0, \omega^0 \ \widetilde{\otimes}_{t^0}\ X) d W_t, \quad X_0 = \omega^0 (t^0).
	\]
	For simplicity, we ignore the standard extension in our notation. 
	
	{\em Step 2: A candidate for the approximating sequence.} By Condition \ref{cond: lipschitz}, we obtain from a short computation that, for every \(n \in \mathbb{Z}_+\), \(T, M > 0\) and every \(\alpha^0 \in \Omega\), there exists a constant \(C > 0\), that depends on \(T, M, \sup_{n \in \mathbb{Z}_+} t^n\) and \(\sup_{n \in \mathbb{Z}_+} \sup_{s \in [0, t^n]} \| \omega^n (s) \|\), such that 
	\begin{equation} \label{eq: Lipschitz concaten}
	\begin{split}
	\| b (\f (t, \alpha^0), t + t^n, \omega^n \ \widetilde{\otimes}_{t^n}\ \omega) - b (\f (t, \alpha^0), t + t^n, \omega^n \ \widetilde{\otimes}_{t^n}\ \alpha)\| &\leq C \sup_{s \in [0, t]} \| \omega(s) - \alpha(s)\|,
	\\ \| \sigma (\f (t, \alpha^0), t + t^n, \omega^n \ \widetilde{\otimes}_{t^n}\ \omega) - \sigma (\f (t, \alpha^0), t + t^n, \omega^n \ \widetilde{\otimes}_{t^n}\ \alpha)\|_o &\leq C \sup_{s \in [0, t]} \| \omega(s) - \alpha(s)\|,
	\end{split}
	\end{equation}
	for all \(\omega, \alpha \in \Omega \colon \sup_{s \in [0, t]} \| \omega (s) \| \vee \| \alpha (s) \| \leq M\) and \(t \in [0, T]\).
	Furthermore, we deduce from Condition \ref{cond: LG} that, for every \(T > 0\), there exists a constant \(C > 0\), that depends on \(T,\) \(\sup_{n \in \mathbb{Z}_+} t^n\) and \(\sup_{n \in \mathbb{Z}_+} \sup_{s \in [0, t^n]} \| \omega^n (s) \|\), such that
	\begin{align*}
	\|b (\f (t, \alpha^0), t + t^n, \omega^n \ \widetilde{\otimes}_{t^n}\ \omega)\| + \|\sigma (\f (t, \alpha^0), t + t^n, \omega^n \ \widetilde{\otimes}_{t^n}\ \omega)\|_o \leq C \Big(1 + \sup_{s \in [0, t]} \| \omega (s)\| \Big),
	\end{align*}
	for all \(\omega, \alpha \in \Omega\) and \(t \in [0, T]\).
	Hence, by a standard existence result for stochastic differential equations with random locally Lipschitz coefficients of linear growth (see, e.g., \cite[Theorem 14.30]{jacod79} or \cite[Theorem 4.5]{jacod1981weak}), for every \(n \in \mathbb{N}\), there exists a continuous adapted process \(Y^n\) such that \(P\)-a.s.
	\[
	d Y^n_t = b (\f (t, X), t + t^n, \omega^n \ \widetilde{\otimes}_{t^n}\ Y^n) d t + \sigma(\f (t, X), t + t^n, \omega^n \ \widetilde{\otimes}_{t^n}\ Y^n) d W_t, \quad Y^n_0 = \omega^n (t^n).
	\] 
	Next, we show that the laws of \(\{Y^n \colon n \in \mathbb{N}\}\) form a candidate for an approximation sequence of \(P\).
	\begin{lemma}  \label{lem: good candidate}
		Suppose that the Conditions \ref{cond: LG} and \ref{cond: convexity} hold and take \((t^0, \omega^0) \in \of 0, \infty \of\). On some filtered probability space, let \(\zeta\) be an \(F\)-valued measurable process and let \(Y\) be a continuous semimartingale starting at \(\omega^0 (t^0)\) those semimartingale characteristics are absolutely continuous with densities \((b (\zeta, \cdot + t^0, \omega^0 \ \widetilde{\otimes}_{t^0}\ Y), a(\zeta, \cdot + t^0, \omega^0 \ \widetilde{\otimes}_{t^0}\ Y))\). Then, the law of \(Y\) is an element of \(\cR (t^0, \omega^0)\).
	\end{lemma}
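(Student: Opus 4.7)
The plan is to define \(Q := P \circ Y^{-1}\) on \((\Omega, \cF)\), where \(P\) denotes the probability measure on the underlying filtered space carrying \(\zeta\) and \(Y\), and to verify each of the three defining conditions of \(\cR(t^0, \omega^0)\) in turn. Since \(Y_0 = \omega^0(t^0)\) \(P\)-almost surely, we immediately obtain \(Q \circ X_0^{-1} = \delta_{\omega^0(t^0)}\), which settles the initial distribution requirement.

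To show \(Q \in \fPas\), I would first invoke Stricker's theorem to see that \(Y\) remains a continuous semimartingale for its own right-continuous natural filtration \(\F^Y_+\); since the canonical decomposition of a continuous semimartingale is unique, \((B^Y, C^Y)\) also serves as the pair of \(\F^Y_+\)-characteristics of \(Y\). Lemma~\ref{lem: filtraation shift} gives \(Y^{-1}(\cF_{t+}) = \cF^Y_{t+}\), so Lemma~\ref{lem: jacod restatements} applied with \(\phi = Y\) transfers the semimartingale property to \(X\) under \(Q\) (for its right-continuous natural filtration) and identifies the \(Q\)-characteristics \((B^Q, C^Q)\) of \(X\) through the relations \(B^Q \circ Y = B^Y\) and \(C^Q \circ Y = C^Y\), \(P\)-a.s. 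In particular, the \(Q\)-a.s.\ absolute continuity of \((B^Q, C^Q)\) follows from the \(P\)-a.s.\ absolute continuity of \((B^Y, C^Y)\), so that \(Q \in \fPas\).

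For the final inclusion condition, the hypothesis on the densities of \(Y\) yields, \((\llambda \otimes P)\)-a.e.,
\[
(dB^Y/d\llambda,\, dC^Y/d\llambda)(s) = \bigl( b(\zeta_s, s + t^0, \omega^0\ \widetilde{\otimes}_{t^0}\ Y),\ a(\zeta_s, s + t^0, \omega^0\ \widetilde{\otimes}_{t^0}\ Y) \bigr),
\]
and the right-hand side belongs pointwise to \(\Theta(s + t^0, \omega^0\ \widetilde{\otimes}_{t^0}\ Y)\) because \(\zeta_s \in F\). Combining this with the identity \((dB^Q/d\llambda, dC^Q/d\llambda)(\cdot, Y) = (dB^Y/d\llambda, dC^Y/d\llambda)(\cdot)\), which follows from the process-level relations \((B^Q, C^Q) \circ Y = (B^Y, C^Y)\) and uniqueness of Radon--Nikodym densities, one obtains \((dB^Q/d\llambda, dC^Q/d\llambda)(s, Y) \in \Theta(s + t^0, \omega^0\ \widetilde{\otimes}_{t^0}\ Y)\) \((\llambda \otimes P)\)-a.e. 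Pushing this statement forward along \(Y\) and using \(Q = P \circ Y^{-1}\) yields the desired \((\llambda \otimes Q)\)-a.e.\ inclusion on the canonical space, hence \(Q \in \cR(t^0, \omega^0)\).

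The only delicate point is the bookkeeping of the filtration and of the Radon--Nikodym densities when transferring data from \(Y\) to the canonical process \(X\); once this is routed through Lemma~\ref{lem: jacod restatements} and Lemma~\ref{lem: filtraation shift}, the remainder is essentially formal. Neither Condition~\ref{cond: LG} nor Condition~\ref{cond: convexity} enters actively in the argument; they appear only as umbrella hypotheses of the section and implicitly ensure that the integrals defining \(B^Y\) and \(C^Y\) are well-defined.
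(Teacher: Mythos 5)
Your proposal has a genuine gap at the step where you pass from the original filtration to the natural filtration of \(Y\). You write that ``since the canonical decomposition of a continuous semimartingale is unique, \((B^Y, C^Y)\) also serves as the pair of \(\F^Y_+\)-characteristics of \(Y\).'' This is false in general. The process \(\zeta\) is only assumed to be measurable on the ambient filtered space; it need not be predictable for (or even adapted to) the filtration \((\mathcal{G}_t)_{t\geq 0}\) generated by \(Y\). Consequently, the finite-variation part \(B^Y = \int_0^\cdot b(\zeta_s, s+t^0, \omega^0\,\widetilde{\otimes}_{t^0}\,Y)\,ds\) need not be \(\mathcal{G}\)-adapted, and the canonical decomposition of \(Y\) as a \(\mathcal{G}\)-semimartingale is a different decomposition. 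The correct first \(\mathcal{G}\)-characteristic is the dual predictable projection of \(B^Y\) onto \(\mathcal{G}\); after differentiating, its density is \(E^P\big[b(\zeta_t, t+t^0, \omega^0\,\widetilde{\otimes}_{t^0}\,Y)\mid\mathcal{G}_{t-}\big]\), not \(b(\zeta_t, t+t^0, \omega^0\,\widetilde{\otimes}_{t^0}\,Y)\) itself. (The second characteristic does not change, being the quadratic variation, but the point stands for the drift.) This is precisely why the paper invokes Proposition 9.24 in Jacod's book together with Condition~\ref{cond: LG} to control the variation and obtain the conditional-expectation form.

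Once the density is recognized to be a conditional expectation of a quantity valued in \(\Theta(t+t^0, \omega^0\,\widetilde{\otimes}_{t^0}\,Y)\), one needs the convexity and closedness of this set (Condition~\ref{cond: convexity}, via the Diestel reference on conditional expectations of set-valued maps) to conclude that the conditional expectation still lies in \(\Theta(t+t^0, \omega^0\,\widetilde{\otimes}_{t^0}\,Y)\). Your concluding remark that ``Neither Condition~\ref{cond: LG} nor Condition~\ref{cond: convexity} enters actively'' is therefore incorrect: both are essential. Condition~\ref{cond: LG} is needed for the projection argument, and Condition~\ref{cond: convexity} is exactly what makes the conditional-expectation density stay in the target set. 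Without it, the lemma fails, which is one of the reasons the convexity hypothesis appears throughout Section~\ref{sec: vis}.
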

	\begin{proof}
 Let \(P\) be the probability measure of the underlying filtered space and let \(Q := P \circ Y^{-1}\) be the law of \(Y\).
		Further, denote the natural right-continuous filtration of~\(Y\) by \((\mathcal{G}_t)_{t \geq 0}\). Thanks to Stricker's theorem (see, e.g., \cite[Lemma~2.7]{jacod80}), \(Y\) is a \(P\)-\((\mathcal{G}_t)_{t \geq 0}\)-semimartingale. Furthermore, by virtue of Condition \ref{cond: LG}, we deduce from \cite[Proposition 9.24]{jacod79} and \cite[Theorem 5.25]{HWY} that the \((\mathcal{G}_t)_{t \geq 0}\)-characteristics of \(Y\) are given by \[
		\Big( \int_0^\cdot E^P \big[ b (\zeta, t + t^0, \omega^0 \ \widetilde{\otimes}_{t^0}\ Y) | \mathcal{G}_{t-} \big] dt, \int_0^\cdot E^P \big[ a (\zeta, t + t^0, \omega^0 \ \widetilde{\otimes}_{t^0}\ Y) | \mathcal{G}_{t-} \big] dt \Big). 
		\]
		By Lemma \ref{lem: jacod restatements}, the coordinate process \(X\) is a \(Q\)-\(\F_+\)-semimartingale and its characteristics \((B, C)\) are \(Q\)-a.s. absolutely continuous and such that \(P\)-a.s. for \(\llambda\)-a.a. \(t \in \bR_+\)
		\[
		\Big(\frac{d B}{d \llambda} , \frac{d C }{d \llambda} \Big)(t, Y) = E^P \big[ \big(b (\zeta, t + t^0, \omega^0\ \widetilde{\otimes}_{t^0}\ Y), a (\zeta, t + t^0, \omega^0 \ \widetilde{\otimes}_{t^0}\ Y) \big) | \mathcal{G}_{t-} \big].
		\]
		Using the convexity assumption given by Condition \ref{cond: convexity} and \cite[Corollary 8, p. 48]{diestel}, we obtain that 
		\begin{align*}
		(\llambda \otimes Q) ( (d B &/ d \llambda, d C /d \llambda) \not \in \Theta (\cdot + t^0, \omega^0 \ \widetilde{\otimes}_{t^0} X) ) 
		\\&= \iint \1 \big \{ E^P \big[ \big(b (\zeta, t + t^0, \omega^0 \ \widetilde{\otimes}_{t^0}\ Y), \\&\hspace{2cm} a (\zeta, t + t^0, \omega^0 \ \widetilde{\otimes}_{t^0}\ Y) \big) | \mathcal{G}_{t-} \big] \not \in \Theta (t + t^0, \omega^0\ \widetilde{\otimes}_{t^0} \ Y) \big\} d (\llambda \otimes P)
		\\&= 0.
		\end{align*}
		This completes the proof.
	\end{proof}
	Thanks to Lemma \ref{lem: good candidate}, for every \(n \in \mathbb{N}\), the law of \(Y^n\) is an element of \(\cR (t^n, \omega^n)\). Consequently, the laws of \(\{Y^n \colon n \in \mathbb{N}\}\) are candidates for an approximation sequence of the measure \(P\).
	
	{\em Step 3: ucp convergence of \(Y^n\) to \(X\):} In this step we prove that \(Y^n \to X\) in the ucp topology, i.e., uniformly on compact time sets in probability. Hereby, we use some ideas we learned from the proof of \cite[Theorem, p.~88]{MP80}.
	Take \(T ,\varepsilon > 0\) and define 
	\[
	\tau^m_n := \inf \{t \geq 0 \colon \|X_t\| \vee \|Y^n_t\| \geq m\}, \qquad n, m > 0.
	\]
	Thanks to the moment bound from Lemma \ref{lem: rel comp}, we obtain that 
	\begin{align*}
	P (T^m_n \leq T) = P \Big( \sup_{s \in [0, T]} \|X_s\| \vee \|Y^n_s\| \geq m \Big) \leq \frac{C}{m}, 
	\end{align*}
	where the constant \(C > 0\) is independent of \(n\) and \(m\). Thus, we can take \(M = M_\varepsilon > 0\) large enough such that 
	\begin{align} \label{eq: M large enough}
	\sup_{n \in \mathbb{N}} P (T^M_n \leq T) \leq \varepsilon.
	\end{align}
	Next, using the Burkholder--Davis--Gundy inequality and H\"older's inequality, and \eqref{eq: Lipschitz concaten}, we obtain, for every \(t \in [0, T]\) and \(n \in \mathbb{N}\), that 
	\begin{align*}
	E^P &\Big[ \sup_{s \in [0, t \wedge T^M_n]} \|X_s - Y^n_s\|^2 \Big] \\&\leq C \Big( \|\omega^0 (t^0) - \omega^n (t^n)\|^2 \\&\qquad \quad+ E^P \Big[ \int_0^{t \wedge T^M_n} \big( \| b (\f (s, X), s + t^0, \omega^0 \ \widetilde{\otimes}_{t^0}\ X) - b (\f (s, X), s + t^n, \omega^n \ \widetilde{\otimes}_{t^n}\ Y^n)\|^2
	\\&\hspace{2.2cm}+ \| \sigma (\f (s, X), s + t^0, \omega^0 \ \widetilde{\otimes}_{t^0}\ X) - \sigma (\f (s, X), s + t^n, \omega^n \ \widetilde{\otimes}_{t^n}\ Y^n)\|_o^2 \big) ds \Big]\Big)
	\\&\leq C \Big( \|\omega^0 (t^0) - \omega^n (t^n)\|^2 \\&\qquad \quad+ E^P \Big[ \int_0^{t \wedge T^M_n} \big( \| b (\f (s, X), s + t^0, \omega^0 \ \widetilde{\otimes}_{t^0}\ X) - b (\f (s, X), s + t^n, \omega^n \ \widetilde{\otimes}_{t^n}\ X)\|^2
	\\&\hspace{2.2cm} +\| b (\f (s, X), s + t^n, \omega^n \ \widetilde{\otimes}_{t^n}\ X) - b (\f (s, X), s + t^n, \omega^n \ \widetilde{\otimes}_{t^n}\ Y^n)\|^2
	\\&\hspace{2.2cm} +\| \sigma (\f (s, X), s + t^0, \omega^0 \ \widetilde{\otimes}_{t^0}\ X) - \sigma (\f (s, X), s + t^n, \omega^n \ \widetilde{\otimes}_{t^n}\ X)\|_o^2
	\\&\hspace{2.2cm} +\| \sigma (\f (s, X), s + t^n, \omega^n \ \widetilde{\otimes}_{t^n}\ X) - \sigma (\f (s, X), s + t^n, \omega^n \ \widetilde{\otimes}_{t^n}\ Y^n)\|_o^2 \big) ds \Big]\Big)
	\\&\leq C \Big( \|\omega^0 (t^0) - \omega^n (t^n)\|^2 \\&\qquad \quad+ E^P \Big[ \int_0^{t \wedge T^M_n} \big( \| b (\f (s, X), s + t^0, \omega^0 \ \widetilde{\otimes}_{t^0}\ X) - b (\f (s, X), s + t^n, \omega^n \ \widetilde{\otimes}_{t^n}\ X)\|^2
	\\&\hspace{2.2cm} +\| \sigma (\f (s, X), s + t^0, \omega^0 \ \widetilde{\otimes}_{t^0}\ X) - \sigma (\f (s, X), s + t^n, \omega^n \ \widetilde{\otimes}_{t^n}\ X)\|_o^2 \big) ds \Big]
	\\&\hspace{2.2cm} + E^P \Big[ \sup_{s \in [0, t \wedge T^M_n]} \|X_s - Y^n_s\|^2 \Big] \Big).
	\end{align*}
	Hence, from Gronwall's lemma, we get, for all \(n \in \mathbb{N}\), that
	\begin{align*}
	E^P \Big[ &\sup_{s \in [0, T \wedge T^M_n]} \|X_s - Y^n_s\|^2 \Big] 
	\\&\leq C \Big( \|\omega^0 (t^0) - \omega^n (t^n)\|^2 \\&\hspace{0.5cm}+ E^P \Big[ \int_0^{T \wedge T^M_n} \big( \| b (\f (s, X), s + t^0, \omega^0 \ \widetilde{\otimes}_{t^0}\ X) - b (\f (s, X), s + t^n, \omega^n \ \widetilde{\otimes}_{t^n}\ X)\|^2
	\\&\hspace{2.2cm} +\| \sigma (\f (s, X), s + t^0, \omega^0 \ \widetilde{\otimes}_{t^0}\ X) - \sigma (\f (s, X), s + t^n, \omega^n \ \widetilde{\otimes}_{t^n}\ X)\|_o^2 \big) ds \Big]\Big).
	\end{align*}
	where the constant \(C > 0\) is independent of \(n\). Clearly, the first term on the r.h.s. converges to zero as \(n \to \infty\). The second term converges to zero by Condition \ref{cond: joint continuity in all}, Corollary \ref{coro: real concatenation continuous} and the dominated convergence theorem, which can be applied thanks to the linear growth Condition \ref{cond: LG} and either the definition of the sequence \((T^M_n)_{n \in \mathbb{N}}\), or the uniform moment bound from Lemma \ref{lem: rel comp}. We conclude that 
	\begin{align} \label{eq: conv difference}
	E^P \Big[ \sup_{s \in [0, T \wedge T^M_n]} \|X_s - Y^n_s\|^2 \Big]  \to 0 \text{ as } n \to \infty.
	\end{align}
	We are in the position to complete the proof for ucp convergence. Namely, using \eqref{eq: M large enough} and \eqref{eq: conv difference}, we get, for every \(\delta > 0\), that 
	\begin{align*}
	P \Big( \sup_{s \in [0, T]} \|X_s - Y^n_s\| \geq \delta \Big) &\leq P \Big( \sup_{s \in [0, T]} \|X_s - Y^n_s\| \geq \delta, T^M_n > T \Big) + \varepsilon
	\\&\leq \frac{1}{\delta^2} \ E^P \Big[ \sup_{s \in [0, T \wedge T^M_n]} \|X_s - Y^n_s\|^2 \Big] + \varepsilon 
	\to \varepsilon
	\end{align*}
	as \(n \to \infty\). This proves that \(Y^n \to X\) in the ucp topology. For continuous processes, ucp convergences implies weak convergences (which follows, for instance, from \cite[Corollary 23.5]{Kallenberg}). Hence, the proof of lower hemicontinuity is complete. \qed
	
	\subsection{Proof of Theorem \ref{theo: holder estimate value function}} \label{sec: pf holder}
	
	Fix \(T > 0\) and let \(\psi\) be a bounded Lipschitz continuous function. 
	We fix \(\omega^0, \alpha^0 \in \Omega\) and \(t^0 \in [0, T]\). 
	Take an arbitrary measure \(P \in \cR (t^0, \omega^0)\).
	By Lemma~\ref{lem: representation via control} and some classical representation result for continuous local martingales, there exists a predictable function \(\f = \f (P) \colon \of 0, \infty\of \hspace{0.1cm} \to F\) such that, possibly on an extension of the filtered probability space \((\Omega, \cF, \F, P)\), there exists an \(\dr\)-dimensional standard Brownian motion \(W\) such that \(P\)-a.s.
	\[
	d X_t = b (\f (t, X), t + t^0, \omega^0 \ \widetilde{\otimes}_{t^0}\ X) d t + \sigma(\f (t, X), t + t^0, \omega^0 \ \widetilde{\otimes}_{t^0}\ X) d W_t, \quad X_0 = \omega^0 (t^0).
	\]
	Thanks to Condition \ref{cond: global lip and bdd}, by a standard existence result for stochastic differential equations with random locally Lipschitz coefficients of linear growth, there exists a continuous adapted process \(Y\) such that \(P\)-a.s.
	\[
	d Y_t = b (\f (t, X), t + t^0, \alpha^0 \ \widetilde{\otimes}_{t^0}\ Y) d t + \sigma(\f (t, X), t + t^0, \alpha^0 \ \widetilde{\otimes}_{t^0}\ Y) d W_t, \quad Y_0 = \alpha^0 (t^0).
	\] 
	Using the Burkholder--Davis--Gundy inequality, H\"older's inequality and the global Lipschitz part from Condition \ref{cond: global lip and bdd}, we obtain that 
	\begin{align*}
	E^P &\Big[ \sup_{s \in [0, T]} \|X_s - Y_s\|^2 \Big] 
	\\ &\leq C \Big( \|\omega (t^0) - \alpha (t^0)\|^2 
	\\&\hspace{1cm}+ \int_0^T E^P \big[ \|b (\f (t, X), t + t^0, \omega^0 \ \widetilde{\otimes}_{t^0}\ X) - b (\f (t, X), t + t^0, \alpha^0 \ \widetilde{\otimes}_{t^0}\ Y)\|^2 \big]ds 
	\\&\hspace{1cm} + \int_0^T E^P \big[ \|\sigma (\f (t, X), t + t^0, \omega^0 \ \widetilde{\otimes}_{t^0}\ X)- \sigma(\f (t, X), t + t^0, \alpha^0 \ \widetilde{\otimes}_{t^0}\ Y)\|^2_o \big] ds \Big) 
	\\&\leq C \Big( \sup_{s \in [0, t^0]} \| \omega^0 (s) - \alpha^0 (s) \|^2 + \int_0^T E^P \Big[ \sup_{s \in[0, t]} \| X_s - Y_s\|^2 \Big] ds\Big).
	\end{align*}
	Now, Gronwall's lemma yields that 
	\[
	E^P \Big[ \sup_{s \in [0, T]} \|X_s - Y_s\|^2 \Big]  \leq C \hspace{0.05cm} \sup_{s \in [0, t^0]} \| \omega^0 (s) - \alpha^0 (s) \|^2,
	\]
	and, by Jensen's inequality,
	\begin{align} \label{eq: Lipschitz inequality in space}
	E^P \Big[ \sup_{s \in [0, T]} \|X_s - Y_s\| \Big]  \leq C \hspace{0.05cm} \sup_{s \in [0, t^0]} \| \omega^0 (s) - \alpha^0 (s) \|.
	\end{align}
	Thanks to Lemma \ref{lem: good candidate}, we have \(P \circ Y^{-1} \in \cR (t^0, \alpha^0)\). Thus, in case  \(v (t^0, \omega^0) \geq v(t^0, \alpha^0)\), we get, from Corollary~\ref{coro: connection ElK NVH}, the Lipschitz continuity of \(\psi\) and \eqref{eq: Lipschitz inequality in space}, that 
	\begin{align*}
	v (&t^0, \omega^0) - v (t^0, \alpha^0) 
	\\&\leq \sup_{P \in \cR (t^0, \omega^0)} \Big(E^P \big[ | \psi (\omega^0 \ \widetilde{\otimes}_{t^0}\ X) - \psi (\alpha^0 \ \widetilde{\otimes}_{t^0}\ Y) | \big] + E^P \big[ \psi (\alpha^0 \ \widetilde{\otimes}_{t^0}\ Y) \big] \Big) - v (t^0, \alpha^0)
	\\&\leq C \Big(\sup_{s \in [0, t^0]} \|\omega^0 (s) - \alpha^0(s)\| + \sup_{P \in \cR (t^0, \omega)} E^P \Big[ \sup_{s \in [0, T]} \|X_s - Y_s\| \Big] \Big) + v (t^0, \alpha^0) - v (t^0, \alpha^0)
	\\&\leq C \hspace{0.05cm} \sup_{s \in [0, t^0]} \| \omega^0 (s) - \alpha^0 (s) \|.
	\end{align*}
	By symmetry, this inequality yields that 
	\begin{align} \label{eq: lipschitz ineq value function}
	|v (t^0, \omega^0) - v (t^0, \alpha^0)| \leq C \hspace{0.05cm} \sup_{s \in [0, t^0]} \| \omega^0 (s) - \alpha^0 (s) \|.
	\end{align}
	Next, let \(0 \leq s_0 \leq t_0\). Using the DPP as given by Theorem \ref{theo: DPP} for the first line and \eqref{eq: lipschitz ineq value function} for the third line, we obtain that 
	\begin{equation} \label{eq: holder inequality in time p1}
	\begin{split}
	| v (s^0, \omega^0) - v (t^0, \omega^0) | &= \Big| \sup_{P \in \cA (s^0, \omega^0)} E^P \big[ v (t^0, X) \big] - v (t^0, \omega^0) \Big| 
	\\&\leq \sup_{P \in \cA (s^0, \omega^0)} E^P \big[ | v (t^0,  X) - v (t^0, \omega^0) | \big]
	\\&\leq \sup_{P \in \cA (s^0, \omega^0)} E^P \Big[ \sup_{s \in [0, t^0]} \| X_s - \omega^0 (s) \| \Big]
	\\&= \sup_{P \in \cA (s^0, \omega^0)} E^P \Big[ \sup_{s \in [s^0, t^0]} \| X_s - \omega^0 (s) \| \Big]
	\\&\leq \sup_{P \in \cA (s^0, \omega^0)} E^P \Big[ \sup_{s \in [s^0, t^0]} \| X_s - \omega^0 (s^0) \| \Big] \\&\hspace{4cm}+ \sup_{s \in [s^0, t^0]} \| \omega^0 (s) - \omega^0 (s^0) \|.
	\end{split}
	\end{equation}
	Using Condition \ref{cond: LG}, it follows as in the solution to \cite[Problem 3.3.15]{KaraShre} that
	\begin{align} \label{eq: holder inequality in time p2}
	\sup_{P \in \cA (s^0, \omega^0)} E^P \Big[ \sup_{s \in [s^0, t^0]} \| X_s - \omega^0 (s^0) \| \Big] \leq C \Big( 1 + \sup_{s \in [0, s^0]} \|\omega^0 (s)\| \Big) |t^0 - s^0|^{1/2}.
	\end{align}
	Finally, using \eqref{eq: lipschitz ineq value function}, \eqref{eq: holder inequality in time p1} and \eqref{eq: holder inequality in time p2}, we obtain that 
	\begin{align*}
	| v (t^0, \omega^0) &- v (s^0, \alpha^0) | 
	\\&\leq | v (t^0, \omega^0) - v (s^0, \omega^0) | + | v (s^0, \omega^0) - v (s^0, \alpha^0) | 
	\\&\leq C \Big[ \Big( 1 + \sup_{s \in [0, s^0]} \|\omega^0 (s)\| \Big) |t^0 - s^0|^{1/2} 
	\\&\hspace{3cm}+ \sup_{s \in [s^0 , t^0]} \| \omega^0 (s) - \omega^0 (s^0) \| + \sup_{s \in [0, s^0]} \| \omega^0 (s) - \alpha^0 (s) \| \Big]
	\\&\leq C \Big[ \Big( 1 + \sup_{s \in [0, s^0]} \|\omega^0 (s)\| \Big) |t^0 - s^0|^{1/2} 	\\&\hspace{3cm}+ \sup_{s \in [s^0 , t^0]} \| \omega^0 (s) - \alpha^0 (s^0) \| + 2\sup_{s \in [0, s^0]} \| \omega^0 (s) - \alpha^0 (s) \| \Big]
	\\&\leq C \Big[ \Big( 1 + \sup_{s \in [0, s^0]} \|\omega^0 (s)\| \Big) |t^0 - s^0|^{1/2} + \sup_{s \in [0, T]} \| \omega^0 (s \wedge t^0) - \alpha^0 (s \wedge s^0) \| \Big]
	\\&\leq C \dd_T ( (t^0, \omega^0), (s^0, \alpha^0)).
	\end{align*}
	The proof is complete.
	\qed


\section{Proof of the nonlinear martingale problem: Theorem \ref{theo: MP}} \label{sec: pf MP}

In this section we work under the assumptions of Theorem \ref{theo: MP}, i.e., we assume that \(d = 1\), that \(F = F_0 \times F_1\) for two compact metrizable spaces  \(F_0\) and \(F_1\), that \(b\) depends on  \(F\) only through the \(F_0\) variable, that \(a\) depends on \(F\) only through the \(F_1\) variable, and that the Conditions \ref{cond: LG} and \ref{cond: joint continuity in all} hold. 

\begin{lemma} \label{lem: real worst case}
	For every \((t, \omega) \in \of 0, \infty\of\), there exists a \(\oP \in \cA (t, \omega)\) such that \((\llambda \otimes \oP)\)-a.e.
	\begin{align*}
	b^{\oP}_{\cdot +t} = \sup \big\{ b (f_0, \cdot + t, \omega \otimes_t X) \colon f_0 \in F_0\big\}, \qquad 
	a^{\oP}_{\cdot + t} = \sup \big\{ a (f_1, \cdot + t, \omega \otimes_t X ) \colon f_1 \in F_1 \big\},
	\end{align*}
	where \((b^{\oP}_{\cdot + t}, a^{\oP}_{\cdot + t})\) denote the Lebesgue densities of the \(\oP\)-characteristics of \(X_{\cdot + t}\) (for its right-continuous natural filtration).
\end{lemma}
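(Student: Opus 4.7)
The plan is to identify explicit ``worst-case'' coefficients $\bar b$ and $\bar a$ that realize the two suprema in the statement, verify that these are continuous functions of linear growth with values in the correspondence $\Theta$, and then produce $\oP$ by the same Skorokhod existence argument used in the proof of Lemma~\ref{lem: worst case}.

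First I would set, for $(s, \alpha) \in \of 0, \infty \of$,
\[
\bar b(s, \alpha) := \sup_{f_0 \in F_0} b(f_0, s, \alpha), \qquad \bar a(s, \alpha) := \sup_{f_1 \in F_1} a(f_1, s, \alpha).
\]
Since $F_0, F_1$ are compact and $b, a$ are jointly continuous by Condition~\ref{cond: joint continuity in all}, Berge's maximum theorem (\cite[Theorem~17.31]{charalambos2013infinite}) yields that $\bar b$ and $\bar a$ are continuous on $\of 0, \infty \of$ and that the suprema are attained at some $f_0^*(s, \alpha) \in F_0$ and $f_1^*(s, \alpha) \in F_1$. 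Using that $F = F_0 \times F_1$ with $b$ and $a$ depending only on $f_0$ and $f_1$ respectively, this gives
\[
(\bar b(s, \alpha), \bar a(s, \alpha)) = (b(f_0^*(s, \alpha), s, \alpha), a(f_1^*(s, \alpha), s, \alpha)) \in \Theta(s, \alpha).
\]
Moreover, the bound in Condition~\ref{cond: LG} is uniform over $f \in F$, so $\bar b$ and $\bar a$ inherit linear growth.

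With these two observations in hand, I would simply repeat the construction in the proof of Lemma~\ref{lem: worst case}. By Corollary~\ref{coro: real concatenation continuous}, the maps $(s, \alpha) \mapsto \bar b(s + t, \omega \ \widetilde{\otimes}_t\ \alpha)$ and $(s, \alpha) \mapsto \sqrt{\bar a(s + t, \omega \ \widetilde{\otimes}_t\ \alpha)}$ (using $\d = 1$) are continuous and of linear growth in the sense of \cite[p.~258]{skorokhod}, so \cite[Theorem~4, p.~265]{skorokhod} yields a weak solution $Y$, starting at $\omega(t)$, of the SDE
\[
dY_s = \bar b(s + t, \omega \ \widetilde{\otimes}_t\ Y)\, ds + \sqrt{\bar a(s + t, \omega \ \widetilde{\otimes}_t\ Y)}\, dW_s.
\]
Denoting its law by $Q$ and setting $\oP := Q \circ (\omega \ \widetilde{\otimes}_t \ \cdot)^{-1}$, Lemma~\ref{lem: jacod restatements} together with the identities assembled in Section~\ref{subsec: preparations} shows that $\oP \in \fPas(t)$, that $\oP(X^t = \omega^t) = 1$, and that the Lebesgue densities of the $\oP$-characteristics of $X_{\cdot + t}$ equal $\bar b(\cdot + t, \omega \otimes_t X)$ and $\bar a(\cdot + t, \omega \otimes_t X)$. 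The inclusion in $\Theta$ then gives $\oP \in \cA(t, \omega)$, while by construction these densities coincide with the suprema from the statement.

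The only point requiring care is the bookkeeping around the shifted versus unshifted coordinate process, which is routine given Lemmata~\ref{lem: connection Q and R}--\ref{lem: connection Q and C}. I do not foresee a serious obstacle beyond organising these passages correctly.
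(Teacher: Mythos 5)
Your proposal follows essentially the same route as the paper: identify the explicit worst-case coefficients $\bar b, \bar a$ as suprema over $F_0, F_1$, use Berge's maximum theorem together with Conditions~\ref{cond: LG} and \ref{cond: joint continuity in all} and Corollary~\ref{coro: real concatenation continuous} to obtain continuity and linear growth, and then invoke Skorokhod's existence theorem and Lemma~\ref{lem: jacod restatements} to produce $\oP$. The paper's proof is terser (it works directly with $\omega \otimes_t \alpha$ rather than constructing with $\omega\ \widetilde{\otimes}_t\ \alpha$ and pushing forward), but there is no substantive difference in the argument.
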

\begin{proof}
	Thanks to the Conditions \ref{cond: LG} and \ref{cond: joint continuity in all}, Corollary \ref{coro: real concatenation continuous} and Berge's maximum theorem (\cite[Theorem~17.31]{charalambos2013infinite}), the functions 
	\[(s, \alpha) \mapsto \sup \big\{ b (f_0, s + t, \omega \otimes_t \alpha) \colon f_0 \in F_0\big\},  \quad (s, \alpha) \mapsto \sup \big\{ a (f_1, s + t, \omega \otimes_t \alpha ) \colon f_1 \in F_1 \big\}\] are continuous and of linear growth (in the sense of (4) on p. 258 in \cite{skorokhod}). Hence, the claim follows from \cite[Theorem 4, p. 265]{skorokhod} and Lemma \ref{lem: jacod restatements}.
\end{proof}

\begin{proof}[Proof of Theorem \ref{theo: MP}]
	Fix \(\omega \in \Omega, n \in \mathbb{N}, \phi \in \mathbb{M}_{icx}\) and \(s, h \geq 0\). 
	Take \(P \in \cA(s, \omega)\), denote the Lebesgue densities of the \(P\)-characteristics of \(X_{\cdot + s}\) by \((b^{P}_{\cdot + s}, a^{P}_{\cdot + s})\) and denote the local \(P\)-martingale part of \(X_{\cdot + s}\) by \(M^P_{\cdot + s}\).
	It\^o's formula yields that 
	\[
	\phi (X_{s + h}) - \phi (X_{s}) = \int_0^{h} \phi' (X_{r + s}) d M^P_{r + s} +  \int_s^{s + h} \Big( \phi' (X_r) b^P_r + \frac{1}{2} \phi'' (X_{r}) a^{P}_{r} \Big) dr.
	\]
	Hence, using the definition of \(\rho_n = \inf \{r \geq 0 \colon |X_r| \geq n\}\) and Condition \ref{cond: LG}, the stopped local martingale part is a true martingale and we obtain that 
	\begin{align*}
	E^P \Big[ \phi (X_{(s + h) \wedge \rho_n}) &- \phi (X_{s \wedge \rho_n}) - \int_{s \wedge \rho_n}^{(s + h) \wedge \rho_n} G (r, X, \phi) dr \Big] 
	\\&= E^P \Big[  \int_{s \wedge \rho_n}^{(s + h) \wedge \rho_n} \Big(\Big( \phi' (X_r) b^P_r + \frac{1}{2} \phi'' (X_{r}) a^{P}_{r} \Big) - G (r, X, \phi) \Big) dr \Big].
	\end{align*}
	By definition of \(G\), we clearly have 
	\[ 
	E^P \Big[  \int_{s \wedge \rho_n}^{(s + h) \wedge \rho_n} \Big(\Big( \phi' (X_r) b^P_r + \frac{1}{2} \phi'' (X_{r}) a^{P}_{r} \Big) - G (r, X, \phi) \Big) dr \Big] \leq 0,
	\]
	which implies 
	\[
	\mathcal{E}_s \Big(\phi (X_{(s + h) \wedge \rho_n}) - \phi (X_{s \wedge \rho_n}) - \int_{s \wedge \rho_n}^{(s + h) \wedge \rho_n} G (r, X, \phi) dr\Big) (\omega) \leq 0.
	\]
	Let us now prove the converse inequality. 
	By Lemma \ref{lem: real worst case}, there exists a probability measure \(\oP \in \cA (s, \omega)\) such that \((\llambda \otimes \oP)\)-a.e.
	\begin{align*}
	b^{\oP}_{\cdot +s} = \sup \big\{ b (f_0, \cdot + s, \omega \otimes_s X) \colon f_0 \in F_0\big\}, \qquad 
	a^{\oP}_{\cdot + s} = \sup \big\{ a (f_1, \cdot + s, \omega \otimes_s X ) \colon f_1 \in F_1 \big\}.
	\end{align*}
	Now, as \(\phi', \phi'' \geq 0\), we get
	\begin{align*}
	E^{\oP} \Big[  \int_{s \wedge \rho_n}^{(s + h) \wedge \rho_n} &\Big(\Big( \phi' (X_r) b^{\oP}_r+ \frac{1}{2} \phi'' (X_{r}) a^{\oP}_{r} \Big) - G (r, X, \phi) \Big) dr \Big] 
	\\&= E^{\oP} \Big[  \int_{s \wedge \rho_n}^{(s + h) \wedge \rho_n} \Big(G(r, \omega \otimes_t X, \phi) - G (r, X, \phi) \Big) dr \Big]
	= 0,
	\end{align*}
	which implies 
	\[
	\mathcal{E}_s \Big(\phi (X_{(s + h) \wedge \rho_n}) - \phi (X_{s \wedge \rho_n}) - \int_{s \wedge \rho_n}^{(s + h) \wedge \rho_n} G (r, X, \phi) dr\Big) (\omega) \geq 0.
	\]
	The proof is complete.
\end{proof}

\appendix

\section{Continuity of \(\cR\) in the L\'evy case} \label{subsec: levy}
This appendix is dedicated to the fact that the correspondence \((t, \omega) \mapsto \cR (t, \omega)\) is continuous if the uncertainty set \(\Theta (t, \omega) \equiv \Theta\) is independent of \((t, \omega)\) and convex and compact. This result is covered by our more general Theorems \ref{theo: upper hemi} and \ref{theo: lower hemi}. The purpose of this section is to explain a substantially simpler proof for this L\'evy setting. 
\begin{proposition} 
	Let \( \Theta \subset \mathbb{R} \times \mathbb{R_+} \) be a convex and compact set. Then, the correspondence
	\[
	(t, \omega) \mapsto \cR(t, \omega) = \big\{ P \in \fPas \colon  P \circ X_0^{-1} = \delta_{\omega (t)}, \ (\llambda \otimes P)\text{-a.e. } (dB^{P} /d\llambda, dC^{P}/d\llambda) \in \Theta \big\}
	\]
	is continuous.
\end{proposition}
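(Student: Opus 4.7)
The plan is to reduce the assertion to a one-dimensional continuity property in the scalar starting point. Since $\Theta$ is independent of $(t,\omega)$, the set $\cR(t,\omega)$ depends on $(t,\omega)$ only through $\omega(t)$; writing $\cR^x$ for this set when $X_0 = x$, and recalling that $(t,\omega) \mapsto \omega(t)$ is continuous from $\bR_+ \times \Omega$ to $\bR$, it suffices to establish the continuity of $x \mapsto \cR^x$.

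The crucial structural observation is translation invariance. Let $\tau_x \colon \Omega \to \Omega$ denote the translation $\omega \mapsto \omega + x$. Since adding a constant does not alter the martingale or bounded-variation parts of a semimartingale decomposition, a short check via Lemma \ref{lem: jacod restatements} (applied to the measurable bijection $\tau_x$, which clearly satisfies $\tau_x^{-1}(\cF_t) = \cF_t$) shows that the pushforward $P \mapsto \tau_x^* P$ maps $\cR^0$ bijectively onto $\cR^x$. Moreover, joint continuity of $(x,\omega) \mapsto \omega + x$ combined with \cite[Theorem~8.10.61]{bogachev} yields joint continuity of $(x,P) \mapsto \tau_x^* P$ from $\bR \times \mathfrak{P}(\Omega)$ into $\mathfrak{P}(\Omega)$. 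Lower hemicontinuity of $x \mapsto \cR^x$ then follows immediately by the sequential criterion \cite[Theorem~17.21]{charalambos2013infinite}: given $x^n \to x^0$ and $P \in \cR^{x^0}$, I would write $P = \tau_{x^0}^* Q$ with $Q \in \cR^0$ and set $P^n := \tau_{x^n}^* Q \in \cR^{x^n}$, for which $P^n \to P$ weakly.

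For upper hemicontinuity, the same bijection reduces matters to the compactness of $\cR^0$: for $x^n \to x^0$ and $P^n = \tau_{x^n}^* Q^n \in \cR^{x^n}$, one extracts a subsequence $Q^n \to Q^0 \in \cR^0$ and passes to the limit via continuity of the pushforward. Tightness of $\cR^0$ is covered by Lemma \ref{lem: rel comp}. For closedness, I would exploit that convexity of $\Theta$ renders the almost-everywhere density condition equivalent to the global constraint
\[
(B^P_t - B^P_s, \, C^P_t - C^P_s) \in (t - s) \Theta \quad \text{for all } 0 \leq s < t,
\]
the forward direction by averaging and convexity of $\Theta$, the converse by Lebesgue differentiation. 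Compactness of $\Theta$ then forces the paths of $(B^P, C^P)$ to be uniformly Lipschitz in time, which yields tightness of the joint laws of $(X, B^{P^n}, C^{P^n})$ and, by the weak-convergence arguments used in the proof of Theorem \ref{theo: upper hemi}, identifies the limits as the characteristics of $X$ under $P^0$. The displayed constraint above is preserved under weak limits since $(t-s)\Theta$ is closed.

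The only nontrivial ingredient is the closedness of $\cR^0$, i.e., transferring the characteristics information through a weak limit. Everything is markedly simpler than in Theorem \ref{theo: upper hemi} because $\Theta$ carries no $(t,\omega)$-dependence, so no continuity of any correspondence $(t,\omega) \mapsto \Theta(t,\omega)$ is invoked, the uniform Lipschitz control delivers tightness of the characteristic paths almost for free, and the convex compact constraint passes cleanly to the limit.
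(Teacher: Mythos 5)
Your proof is correct and follows the same main route as the paper's: both reductions hinge on the translation structure of the constant-$\Theta$ case (the paper writes $\cR(t,\omega)$ as the pushforward of $\mathfrak{P}(\Theta)$ under $P \mapsto P\circ(\omega(t)\otimes_0 X)^{-1}$, you write it as $\tau_{\omega(t)}^*\cR^0$ — these are the same map), and both get lower hemicontinuity by composing the continuous pushforward with a trivially lower hemicontinuous correspondence. Where you diverge is the upper hemicontinuity step. The paper disposes of closedness of $\mathfrak{P}(\Theta)$ by citing Theorem \ref{theo: upper hemi} and external references (\cite[Proposition 3.8]{CN22b}, \cite[Proposition 4.4]{neufeld}), whereas you give a self-contained argument: you rewrite the a.e.\ density constraint, via convexity and Lebesgue differentiation, as the closed increment constraint $(B^P_t-B^P_s,\,C^P_t-C^P_s)\in(t-s)\Theta$, observe that compactness of $\Theta$ forces uniform Lipschitz bounds on $(B^P,C^P)$ (hence tightness of the joint laws with no Aldous-type estimate needed), and then pass to the limit. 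This buys a cleaner and genuinely more elementary closedness argument than the general machinery of Theorem \ref{theo: upper hemi}, which is exactly what a "substantially simpler proof for the L\'evy setting" should deliver. The one place you still lean on the general proof — "by the weak-convergence arguments used in the proof of Theorem \ref{theo: upper hemi}" — is the identification of the limiting pair as the characteristics of $X$ for the smaller filtration generated by $X$ alone (Step 4 there, using the dual predictable projection and Jensen via \cite[Corollary~8, p.~48]{diestel}); this step is unavoidable and you are right to flag it rather than elide it, but it does mean the simplification is partial rather than total.
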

\begin{proof}
	Define 
	\[
	\mathfrak{P}(\Theta) :=  \big\{ P \in \fPas \colon P \circ X_0^{-1} \in \{\delta_x \colon x \in \mathbb{R}\}, \ (\llambda \otimes P)\text{-a.e. } (dB^{P} /d\llambda, dC^{P}/d\llambda) \in \Theta \big\}.
	\]
	By Theorem \ref{theo: upper hemi}, the set \(\mathfrak{P}(\Theta) \) is closed, cf. also \cite[Proposition 3.8]{CN22b} and \cite[Proposition~4.4]{neufeld}.
	Furthermore, the function
	\[
	\Omega \times \of 0, \infty \of \ \ni (\omega', t, \omega) \mapsto \omega(t) + 
	\omega' - \omega'(0) = \omega(t) \otimes_0 \omega'
	\]
	is continuous, and hence, by \cite[Theorem 8.10.61]{bogachev}, so is the map 
	\[
	\of 0, \infty \of \ \times \ \mathfrak{P}(\Omega) \ni (t, \omega, P) \mapsto P \circ (\omega(t) \otimes_0 X)^{-1}.
	\]
	We claim that 
	\[ \cR(t,\omega) = \big\{ P \circ (\omega(t) \otimes_0 X)^{-1} \colon P \in \mathfrak{P}(\Theta) \big\}.
	\]
	Indeed, Lemmata \ref{lem: filtraation shift} and \ref{lem: jacod restatements} show that the map
	\( P \mapsto P \circ (\omega(t) \otimes_0 X)^{-1} \) leaves \( \mathfrak{P}(\Theta) \) invariant.
	We stress that this part of the argument is special for the L\'evy case. Providing an intuition, it corresponds to the fact that L\'evy processes have an additive structure in their initial values, i.e., if \(L^x\) is a L\'evy process with starting value \(x\), then \(L^x - x\) is a L\'evy process (with the same L\'evy--Khinchine triplet) starting at zero.
	
	Finally, we are in the position to prove that \( (t,\omega) \mapsto \cR(t,\omega) \) is continuous. By virtue of \cite[Theorem~17.23]{charalambos2013infinite}, \( \cR \) is lower hemicontinuous, being the composition of the lower hemicontinuous correspondence \( (t, \omega) \mapsto \varphi (t, \omega) := \{(t,\omega)\} \times \mathfrak{P}(\Theta) \) 
	and the (single-valued) continuous correspondence \( (t, \omega, P ) \mapsto \{ P \circ (\omega(t) \otimes_0 X)^{-1} \} \). Here, lower hemicontinuity of \( \varphi \) follows from \cite[Theorem~17.28]{charalambos2013infinite}.
	We would like to point out that this part of the argument hinges on the fact that \(\Theta\) is constant.
	For the upper hemicontinuity of \( \cR \) we need to argue separately. 
	Let \( F \subset \mathfrak{P}(\Omega) \) be closed. We need to show that the lower inverse
	\( \cR^l(F) = \{ (t,\omega) \colon \cR((t, \omega)) \cap F \neq \emptyset \} \) is closed.
	Assume that \( (t^n,\omega^n)_{n \in \mathbb{N}} \subset \cR^l(F) \) converges to \( (t, \omega )  \in \of 0, \infty \of \).
	For each \(n \in \mathbb{N} \), there exists a probability measure \( P^n \in \cR (t^n, \omega^n) \cap F \).
	As \( \{ \omega^n(t^n) \colon n \in \mathbb{N} \} \subset \mathbb{R} \) is bounded, the set \( \{ P^n \colon n \in \mathbb{N} \} \) is relatively compact by Lemma \ref{lem: rel comp}. Hence, passing to a subsequence if necessary, we can assume that \( P^n \to P \) weakly for some 
	\( P \in \on{cl} (\mathfrak{P}(\Theta) \cap F ) = \mathfrak{P}(\Theta) \cap F \).
	As \( P \circ X_0^{-1} = \lim_{n \to \infty} P^n \circ X_0^{-1} = \lim_{n \to \infty} \delta_{\omega^n(t^n)} = \delta_{\omega(t)} \), we conclude that \(P \in \cR (t, \omega) \cap F\), which implies
	\( (t,\omega) \in \cR^l(F) \).
\end{proof}


\begin{thebibliography}{1}
	\bibitem{charalambos2013infinite}
	C.~D.~Aliprantis and K.~B.~Border. 
	\newblock {\em Infinite Dimensional Analysis: A Hitchhiker's Guide}.
	\newblock Springer Berlin Heidelberg, 3rd ed., 2006.
	
	
	\bibitem{bogachev}
	V.~I.~Bogachev.
	\newblock {\em Measure Theory}.
	\newblock Springer Berlin Heidelberg, 2007.
	
	
	\bibitem{cosso}
	A.~Cosso and F.~Russo.
	\newblock Crandall–Lions viscosity solutions for path-dependent PDEs: The case of heat equation.
	\newblock {\em Bernoulli}, 28(1):481--503, 2022.
	
	\bibitem{cosso2}
	A.~Cosso, F.~Gozzi, M.~Rosestolato and F.~Russo.
	\newblock Path-dependent Hamilton-Jacobi-Bellman equation:
	Uniqueness of Crandall-Lions viscosity solutions.
	\newblock arXiv:2107.05959v2, 2022.
	
	\bibitem{CN22b}
	D.~Criens and L.~Niemann.
	\newblock Markov selections and Feller properties of nonlinear diffusions.
	\newblock arXiv:2205.15200v5, 2023.
	
	
	\bibitem{dellacheriemeyer}
	C.~Dellacherie and P.~A.~Meyer.
	\newblock {\em Probability and Potential}.
	\newblock North-Holland Publishing Company - Amsterdam, New York, Oxford, 1978.
	
	\bibitem{denk2020semigroup}
	R.~Denk, M.~Kupper, and M.~Nendel. 
	\newblock A semigroup approach to nonlinear L{\'e}vy processes. 
	\newblock {\em Stochastic Processes and their Applications}, 130:1616--1642, 2020.
	
	\bibitem{DenMul02}
	M.~Denuit and A.~M\"uller.
	\newblock Smooth generators of integral stochastic orders.
	\newblock  \newblock {\em The Annals of Applied Probability}, 12(4):1174--1184, 2002.
	
	\bibitem{diestel}
	J.~Diestel and J.~J.~Uhl, Jr.
	\newblock {\em Vector Measures}.
	\newblock American Mathematical Society, 1977.
	
	\bibitem{nicole1987compactification}
	N.~El Karoui, D. Nguyen and M. Jeanblanc-Picqu{\'e}.
	\newblock Compactification methods in the control of degenerate diffusions: existence of an optimal control.
	\newblock {\em Stochastics}, 20(3):169--219, 1987.
	
	\bibitem{ElKa15}
	N.~El Karoui and X.~Tan.
	\newblock Capacities, measurable selection and dynamic programming part II: application in stochastic control problems.
	\newblock arXiv:1310.3364v2, 2015.
	
	
	\bibitem{fadina2019affine}
	T.~Fadina, A.~Neufeld, and T.~Schmidt. 
	\newblock Affine processes under parameter uncertainty. 
	\newblock {\em Probability, Uncertainty and Quantitative Risk}, 4(5), 2019.
	
	\bibitem{skorokhod}
	I.~I.~Gikhman and A.~V.~Skorokhod.
	\newblock {\em The Theory of Stochastic Processes III}.
	\newblock Springer Berlin Heidelberg, reprint of the 1974 ed., 2007.
	
	\bibitem{gozzi}
	F.~Gozzi, C.~Marinelli and S.~Savin.
	\newblock On controlled linear diffusions with delay in a model of optimal advertising under uncertainty with memory effects.
	\newblock {\em Journal of Optimization Theory and Applications}, 142:291--321, 2009.
	
	\bibitem{guo2018martingale}
	X.~Guo, C.~Pan and S.~Peng.
	\newblock  Martingale problem under nonlinear expectations.
	\newblock  {\em Mathematics and Financial Economics}, 12:135--164, 2018.
	
	
	\bibitem{HWY}
	S.-W.~He, J.-G.~Wang and J.-A-~Yan.
	\newblock {\em Semimartingale Theory and Stochastic Calculus}.
	\newblock Routledge, 1992.
	
	
	\bibitem{hol16}
	J.~Hollender.
	\newblock { \em L{\'e}vy-Type Processes under Uncertainty and Related Nonlocal Equations. }
	\newblock PhD thesis, TU Dresden, 2016. 
	
	\bibitem{hu2021g}
	M.~Hu and S.~Peng.
	\newblock  G-L{\'e}vy processes under sublinear expectations.
	\newblock  {\em Probability, Uncertainty and Quantitative Risk}, 6(1), 2021.
	
	\bibitem{IW}
	N.~Ikeda and S.~Watanabe.
	\newblock {\em Stochastic Differential Equatrions and Diffusion Processes}.
	\newblock North-Holland Publishing Company Amsterdam Oxford New York, 2nd edition, 1989.
	
	\bibitem{jacod79}
	J.~Jacod.
	\newblock {\em Calcul stochastique et probl\`emes de martingales}.
	\newblock Springer Berlin Heidelberg New York, 1979.
	
	\bibitem{jacod80}
	J.~Jacod. 
	\newblock Weak and strong solutions of stochastic differential equations.
	\newblock {\em Stochastics}, 3:171--191, 1980.
	
	\bibitem{jacod1981weak}
	J.~Jacod and J.~M\'emin.
	\newblock Weak and strong solutions of stochastic differential equations:
	Existence and stability.
	\newblock In D.~Williams, editor, {\em Stochastic Integrals}, volume 851 of
	{\em Lecture Notes in Mathematics}, pages 169--212. Springer Berlin
	Heidelberg, 1981.
	
	\bibitem{JS}
	J.~Jacod and A.~N.~Shiryaev.
	\newblock {\em Limit Theorems for Stochastic Processes}.
	\newblock Springer Berlin Heidelberg, 2nd ed., 2003.
	
	\bibitem{Kallenberg}
	O.~Kallenberg.
	\newblock {\em Foundations of Modern Probability}.
	\newblock Springer New York, 3rd ed., 2021.
	
	\bibitem{KaraShre}
	I.~Karatzas and S.~E.~Shreve.
	\newblock {\em Brownian Motion and Stochastic Calculus}.
	\newblock Springer New York, 2nd edition, 1991.
	
	\bibitem{klenke}
	A.~Klenke.
	\newblock {\em Probability Theory}.
	\newblock Springer London, 2nd ed., 2014.
	
	\bibitem{K19}
	F.~K\"uhn.
	\newblock Viscosity solutions to Hamilton–Jacobi–Bellman equations associated with sublinear
	L{\'e}vy(-type) processes.
	\newblock {\em ALEA}, 16:531--559, 2019.
	
	\bibitem{K21}
	F.~K\"uhn.
	\newblock On infinitesimal generators of sublinear Markov semigroups.
	\newblock {\em Osaka Journal of Mathematics}, 58(3):487--508, 2021.
	
	
	\bibitem{neufeld}
	C. ~Liu and A. ~Neufeld.
	\newblock Compactness criterion for semimartingale laws and semimartingale optimal transport.
	\newblock {\em Transactions of the American Mathematical Society}, 372(1):187--231, 2019.
	
	\bibitem{MP80}
	M.~Metivier and J.~Pellaumail.
	\newblock {\em Stochastic Integration}.
	\newblock Academic Press, 1980.
	
	\bibitem{muller97}
	A.~M\"uller.
	\newblock Stochastic orders generated by integrals: a unified study.
	\newblock  \newblock {\em Advances in Applied Probability}, 29(2):414--428, 1997.
	
	\bibitem{neufeld2014measurability}
	A.~Neufeld and M.~Nutz.
	\newblock Measurability of semimartingale characteristics with respect to the probability law. 
	\newblock {\em Stochastic Processes and their Applications}, 124:3819--3845, 2014.
	
	\bibitem{neufeld2017nonlinear}
	A.~Neufeld and M.~Nutz.
	\newblock Nonlinear L{\'e}vy processes and their characteristics. \newblock {\em Transactions of the American Mathematical Society}, 369:69--95, 2017.
	
	\bibitem{nutz}
	M.~Nutz.
	\newblock Random G-expectations.
	\newblock {\em Annals of Applied Probability}, 23(5):1755–1777, 2013.
	
	
	\bibitem{NVH}
	M.~Nutz and R.~van Handel.
	\newblock Constructing sublinear expectations on path space. 
	\newblock {\em Stochastic Processes and their Applications}, 123(8):3100--3121, 2013.
	
	\bibitem{peng2007g}
	S.~G.~Peng.
	\newblock G-expectation, G-Brownian motion and related stochastic calculus of It{\^o} type.
	\newblock In F.~E.~Benth et. al., editors, {\em Stochastic Analysis and Applications: The Abel Symposium 2005}, pages 541--567, Springer Berlin Heidelberg, 2007.
	
	\bibitem{peng2008multi}
	S.~G.~Peng.
	\newblock  Multi-dimensional G-Brownian motion and related stochastic calculus under G-expectation.
	\newblock {\em Stochastic Processes and their Applications}, 118:2223--2253, 2008.
	
	\bibitem{peng2010}
	S.~G.~Peng.
	\newblock Nonlinear expectations and stochastic calculus under uncertainty.
	\newblock {\em arXiv:1002.4546}, 2010.
	
	\bibitem{pollard}
	D.~Pollard.
	\newblock {\em Convergence of Stochastic Processes}.
	\newblock Springer New York, 1984.
	
	\bibitem{protter}
	P.~E.~Protter.
	\newblock {\em Stochastic Integration and Differential Equations}.
	\newblock Springer Berlin Heidelberg, 2nd ed., 2004.
	
	
	
	\bibitem{stroockanalytic}
	D.~W.~Stroock.
	\newblock {\em Probability Theory: An Analytic View}.
	\newblock Cambridge University Press, 2nd ed., 2011.
	
	\bibitem{SV}
	D.~W.~Stroock and S.R.S.~Varadhan.
	\newblock {\em Multidimensional Diffusion Processes}.
	\newblock Springer Berlin Heidelberg, reprint of 1997 ed., 2006.
	
	\bibitem{tang}
	S.~Tang and F.~Zhang.
	\newblock Path-Dependent Optimal Stochastic Control and
	Viscosity Solution of Associated Bellman Equations.
	\newblock {\em Discrete and Continuous Dynamical Systems}, 35(11):5521-5553, 2015.
	
		\bibitem{zhou}
	J.~Zhou.
	\newblock Viscosity Solutions to Second Order Path-Dependent Hamilton-Jacobi-Bellman Equations and Applications.
	\newblock arXiv:2005.05309v3, 2022.
	
\end{thebibliography}
\end{document}